\newtheorem{theorem}{Theorem}[section]
\newtheorem*{theorem*}{Theorem}
\newtheorem{corollary}[theorem]{Corollary}
\newtheorem{lemma}[theorem]{Lemma}
\newtheorem*{main}{Main Theorem} 
\theoremstyle{definition}
\newtheorem{definition}[theorem]{Definition}
\newtheorem{example}[theorem]{Example}
\newtheorem{remark}[theorem]{Remark}
\newcommand{\R}{\mathbb{R}}
\newcommand{\ob}[1]{\overline{#1}}
\newcommand{\co}{\mskip0.5 mu\colon\thinspace} 
\newcommand{\nil}{\varnothing}
\newcommand{\up}{\uparrow}
\newcommand{\down}{\downarrow}
\newcommand{\wihat}[1]{\widehat{#1}}
\newcommand{\tild}[1]{\widetilde{#1}}
\newcommand{\defn}[1]{\textbf{#1}}
\newcommand{\vpH}{vp\mathbb{H}}
\newcommand{\vpoH}{\overrightarrow{\vpH}}
\newcommand{\punct}[1]{\mathring{#1}}
\newcommand{\more}{\to}
\newcommand{\oc}{\overrightarrow{\mathbf{c}}}
\newcommand{\boundary}{\partial}
\newcommand{\bdd}{\boundary}
\newcommand{\mc}[1]{\mathcal{#1}}
\begin{document}

\title{Thin position for knots, links, and graphs in 3-manifolds}
   \author{Scott A. Taylor, Maggy Tomova}
   
   \begin{abstract}
We define a new notion of thin position for a graph in a 3-manifold which combines the ideas of thin position for manifolds first originated by Scharlemann and Thompson with the ideas of thin position for knots first originated by Gabai. This thin position has the property that connect summing annuli and pairs-of-pants show up as thin levels. In a forthcoming paper, this new thin position allows us to define two new families of invariants of knots, links, and graphs in 3-manifolds. The invariants in one family are similar to bridge number and the invariants in the other family are similar to Gabai's width for knots in the 3-sphere. The invariants in both families detect the unknot and are additive under connected sum and trivalent vertex sum.
\end{abstract}
   
 \maketitle  
\section{Introduction}

In \cite{G3}, Gabai introduced \emph{width} as an extremely useful knot invariant. Width is a certain function (whose exact definition isn't needed for our purposes) from the set of height functions on a knot in $S^3$ to the natural numbers. It becomes an invariant after minimizing over all possible height functions. A particular height function of a knot is \emph{thin} if it realizes the minimum width. Thin embeddings produce very useful topological information about the knot (see, for example, Gabai's proof of Property R in \cite{G3}; Gordon and Luecke's solution to the knot complement problem \cite{GL}; and Thompson's proof \cite{Thompson} that small knots have thin position equal to bridge position.) Scharlemann and Thompson extended Gabai's width for knots to a width for graphs in $S^3$ \cite{ST-graphthin} and gave a new proof of Waldhausen's classification of Heegaard splittings of $S^3$ \cite{Wald}. They also applied  a similar idea to handle structures of 3-manifolds, producing an invariant of 3-manifolds also called width \cite{ST-thin}. A handle decomposition which attains the width is said to be \emph{thin}. Thin handle decompositions for 3-manifolds have been very useful for understanding the structure of Heegaard splittings of 3-manifolds.

There have been a number of attempts (eg. \cite{Bachman, HRS, HS01, Johnson, TT2}) to define width of knots (and later for tangles and graphs) in a 3-manifold by using various generalizations of the Scharlemann-Thompson constructions. These definitions have been used in various ways, however they have never been as useful as Scharlemann and Thompson's thin position for 3-manifolds. For instance, although Scharlemann and Thompson's thin position has the property that all thin surfaces in a thin handle decomposition for a closed 3-manifold are essential surfaces, the same is not true for the thin positions applied to knot and graph complements. (The papers \cite{Bachman} and \cite{TT2} are exceptions. The former, however, applies only to links in closed 3-manifolds and in the latter there are a number of technical requirements which limit its utility.) 

In this paper, we define an \emph{oriented multiple v.p.-bridge surface} as a certain type of surface $\mc{H}$ in a 3-manifold $M$ transverse to a graph $T \subset M$. The components of $\mc{H}$ are partitioned into \emph{thick surfaces} $\mc{H}^+$ and \emph{thin surfaces} $\mc{H}^-$. We exhibit a collection of thinning moves which give rise to a partial order, denoted $\more$, on the set of \emph{oriented multiple v.p.-bridge surfaces} (terms to be defined later) for a (3-manifold, graph) pair $(M,T)$. These thinning moves include the usual kinds of destabilization and untelescoping moves, known to experts, but we also include several new ones, corresponding to the situation when portions of the graph $T$ are cores of compressionbodies in a generalized Heegaard splitting of $M$ (in the sense of \cite{ST-thin}). More significantly, we also allow untelescoping using various generalizations of compressing discs. Throughout the paper, we show how these generalized compressing discs arise naturally when considering bridge surfaces for (3-manifold, graph) pairs. If $\mc{H}$ and $\mc{K}$ are oriented bridge surfaces, we say that $\mc{H} \more \mc{K}$ if certain kinds of carefully constructed sequences of thinning moves produce $\mc{K}$ from $\mc{H}$. If no such sequence can be applied to $\mc{H}$ then we say that $\mc{H}$ is \emph{locally thin}. If the reader allows us to defer some more definitions until later, we can state our results as:

\begin{main}
Let $M$ be a compact, orientable 3-manifold and $T \subset M$ a properly embedded graph such that no vertex has valence two and no component of $\boundary M$ is a sphere intersecting $T$ two or fewer times. Assume also that no sphere in $M$ intersects $T$ exactly once transversally. Then $\more$ is a partial order on $\vpoH(M,T)$. Furthermore, if $\mc{H} \in \vpoH(M,T)$, then there is a locally thin $\mc{K} \in \vpoH(M,T)$ such that $\mc{H} \more \mc{K}$. Additionally, if $\mc{H}$ is locally thin then the following hold:
\begin{enumerate}
\item Each component of $\mc{H}^+$ is sc-strongly irreducible in the complement of the thin surfaces.
\item No component of $(M, T)\setminus \mc{H}$ is a trivial product compressionbody between a thin surface and a thick surface.
\item Every component of $\mc{H}^-$ is c-essential in the exterior of $T$.
\item If there is a 2-sphere in $M$ intersecting $T$ three or fewer times and which is essential in the exterior of $T$, then some component of $\mc{H}^-$ is such a sphere.
\end{enumerate}
\end{main}

The properties of locally thin surfaces are proved in part by using sweep-out arguments. (See Theorems \ref{Properties Locally Thin} and \ref{thm:They are thin levels}.) The existence of a locally thin $\mc{K}$ with $\mc{H} \more \mc{K}$ is proved using a new complexity which decreases under thinning sequences. (See Theorem \ref{partial order}.) Although this complexity behaves much as Gabai's or Scharlemann-Thompson's widths do, we view it as being more like the complexities used to guarantee that hierarchies of 3-manifolds terminate. In the sequel \cite{TT4} we will show how powerful these locally thin positions for (3-manifold, graph) pairs are. In that paper, we construct two families of non-negative half-integer invariants of (3-manifold, graph) pairs. The invariants of one family are similar to the bridge number and tunnel number of a knot. The invariants of the other family are   very similar to Gabai's width for knots in $S^3$. We prove that these invariants (under minor hypotheses) are additive for both connect sum and trivalent vertex sum and detect the unknot. 

In Section \ref{sec:def} and Section \ref{sec:compbodies} we establish our notation and important definitions including the definition of a multiple v.p.-bridge surface. We describe our simplifying moves in Sections \ref{Reducing Complexity} and \ref{sec: elementary thinning}. In Section \ref{sec:complexity}, we define a complexity for oriented multiple v.p.-bridge surfaces and show it decreases under our simplifying moves. Section \ref{sec:complexity} also uses the simplifying moves to define a partial order $\more$ on the set $\vpoH(M,T)$ of oriented multiple v.p.-bridge surfaces for $(M,T)$. The main theorem, Theorem \ref{partial order}, shows that given $\mc{H} \in \vpoH(M,T)$ there is a least element $\mc{K} \in \vpoH(M,T)$ with respect to the partial order $\more$ such that $\mc{H} \more \mc{K}$. The least elements are called ``locally thin.'' In Section \ref{sec: sweepouts}, we study the important properties of locally thin multiple v.p.-bridge surfaces. Theorem \ref{Properties Locally Thin} lists a number of these properties, one of which is that each component of $\mc{H}^-$ is essential in the exterior of $T$.  Section \ref{thin decomp spheres} sets us up for working with connected sums in \cite{TT4} by showing that if there is a sphere in $M$, transversely intersecting $T$ in three or fewer points, and which is essential in the exterior of $T$, then there is such a sphere that is a thin level for any locally thin multiple v.p.-bridge surface. 

\subsection*{Acknowledgements} Some of this paper is similar in spirit to \cite{TT2}, but here we operate under much weaker hypotheses and obtain much stronger results. We have been heavily influenced by Gabai's work in \cite{G3}, Scharlemann and Thompson's work in \cite{ST-thin}, and Hayashi and Shimokawa's work in \cite{HS01}. Throughout we assume some familiarity with the theory of Heegaard splittings, as in \cite{Scharlemann-Survey}. We thank Ryan Blair, Marion Campisi, Jesse Johnson, Alex Zupan, and the attendees at the 2014 ``Thin Manifold'' conference for helpful conversations.  Thanks also to the referee for many helpful comments and, in particular, finding a subtle but serious error in the original version of Section \ref{sec:complexity}. The resolution of this error led to stronger results and simplified proofs.  The second named author is supported by an NSF CAREER grant and the first author by grants from the Colby College Division of Natural Sciences.

\section{Definitions and Notation} \label{sec:def}

We let $I = [-1,1] \subset \R$, $D^2$ be the closed unit disc in $\R^2$, and $B^3$ be the closed unit ball in $\R^3$. For a topological space $X$, we let $|X|$ denote the number of components of $X$. All surfaces and 3-manifolds we consider will be orientable, smooth or PL, and (most of the time) compact. If $S$ is a surface, then $\chi(S)$ is its euler characteristic. 

A \defn{(3-manifold, graph) pair} $(M,T)$ (or simply just a \defn{pair}) consists of a compact, orientable 3-manifold (possibly with boundary) $M$ and a properly embedded graph $T \subset M$. We do not require $T$ to have vertices so $T$ can be empty or a knot or link.  Since $T$ is properly embedded in $M$ all valence 1 vertices lie on $\boundary M$. We call the valence 1-vertices of $T$ the \defn{boundary vertices} or \defn{leaves} of $T$ and all other vertices the \defn{interior vertices} of $T$. We require that no vertex of $T$ have valence 0 or 2, but we allow a graph to be empty.

For any subset $X$ of $M$, let $\eta(X)$ be an open regular neighborhood of $X$ in $M$ and $\overline{\eta(X)}$ be its closure. If $S$ is a (orientable, by convention) surface properly embedded in $M$ and transverse to $T$, we write $S \subset (M,T)$. If $S \subset (M,T)$, we abuse notation slightly and write
\[
(M,T) \setminus S = (M \setminus S, T \setminus S) = (M \setminus {\eta}(S), T \setminus \eta(S)).
\]
We also write $S \setminus T$ for  $S \setminus {\eta}(T)$.  Observe that $\boundary (M \setminus T)$ is the union of $(\boundary M)\setminus T$ with $\boundary \overline{\eta(T)}$. A surface $S \subset (M,T)$ is \defn{$\boundary$-parallel} if $S \setminus T$ is isotopic relative to its boundary into $\boundary (M\setminus T)$. We say that $S \subset (M,T)$ is \defn{essential} if $S\setminus T$ is incompressible in $M\setminus T$, not $\boundary$-parallel, and not a 2-sphere bounding a 3-ball in $M\setminus T$. We say that the graph $T \subset M$ is \defn{irreducible} if whenever $S \subset (M,T)$ is a 2-sphere we have $|S \cap T| \neq  1$. The pair $(M,T)$ is \defn{irreducible} if $T$ is irreducible and if the 3-manifold $M \setminus T$ is irreducible (i.e. does not contain an essential sphere.)

We will need notation for a few especially simple (3-manifold, graph) pairs. The pair $(B^3, \text{arc})$ will refer to any pair homeomorphic to the pair $(B^3, T)$ with $T$ an arc properly isotopic into $\boundary B^3$. The pair $(S^1 \times D^2, \text{core loop})$ will refer to any pair homeomorphic to the pair $(S^1 \times D^2, T)$ where $T$ is the product of $S^1$ with the center of $D^2$. 

Finally, we will often convert vertices of $T$ into boundary components of $M$ and vice versa. More precisely, if $V$ is the union of all the interior vertices of $T$, we say that $(\mathring{M}, \mathring{T}) = (M \setminus \eta(V), T \setminus \eta(V))$ is obtained by \defn{drilling out the vertices of $T$}. Similarly, we will sometimes refer to \defn{drilling out} certain edges of $T$; i.e. removing an open regular neighborhood of those edges and incident vertices from both $M$ and $T$.

\subsection{Compressing discs of various kinds} We will be concerned with several types of discs which generalize the classical definition of a compressing disc for a surface in a 3-manifold.

\begin{definition}
Suppose that $S \subset (M,T)$ is a surface. Suppose that $D$ is an embedded disc  in $M$ such that the following hold:
\begin{enumerate}
\item $\boundary D \subset (S \setminus T)$, the interior of $D$ is disjoint from $S$, and $D$ is transverse to $T$.
\item $|D \cap T| \leq 1$
\item $D$ is not properly isotopic into $S \setminus T$ in $M \setminus T$ via an isotopy which keeps the interior of $D$ disjoint from $S$ until the final moment. Equivalently, there is no disc $E \subset S$ such that $\boundary E = \boundary D$ and $E \cup D$ bounds either a 3-ball in $M$ disjoint from $T$ or a 3-ball in $M$ whose intersection with $T$ consists entirely of a single unknotted arc with one endpoint in $E$ and one endpoint in $D$.

\end{enumerate}
Then $D$ is an \defn{sc-disc}. More specifically, if $|D \cap T| = 0$ and $\boundary D$ does not bound a disc in $S\setminus T$, then $D$ is a \defn{compressing disc}. If $|D \cap T| = 0$ and $\boundary D$ does bound a disc in $S\setminus T$, then $D$ is a \defn{semi-compressing disc}. If $|D \cap T| = 1$ and $\boundary D$ does not bound an unpunctured disc or a once-punctured disc in $S\setminus T$, then $D$ is a \defn{cut disc}. If $|D \cap T| = 1$ and $\boundary D$ does bound an unpunctured disc or a once-punctured disc in $S\setminus T$, then $D$ is a \defn{semi-cut disc}. A \defn{c-disc} is a compressing disc or cut disc. The surface $S \subset (M,T)$ is \defn{c-incompressible} if $S$ does  not have a c-disc; it is \defn{c-essential} if it is essential and c-incompressible. \end{definition}

\begin{remark}
Semi-cut discs arise naturally when $T$ has an edge containing a local knot, as in Figure \ref{Fig:semicut}. Semi-compressing discs occur in part because even though a 3-manifold $M$ may be irreducible, there is no guarantee that a given 3-dimensional submanifold is also irreducible. 
\end{remark}

\begin{center}
\begin{figure}[tbh]
\includegraphics[scale=0.5]{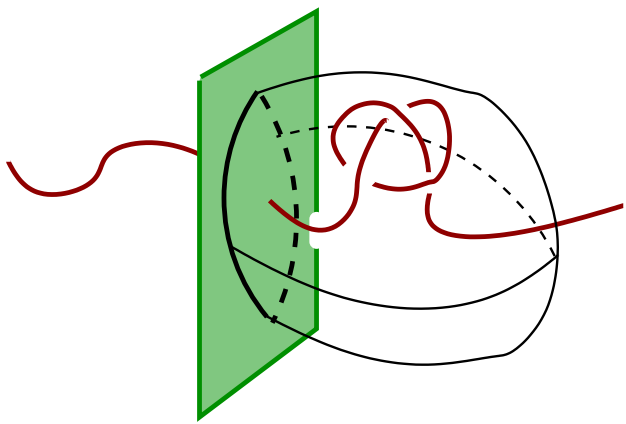}
\caption{Except in very particular situations, the black disc is a semicut disc for the green surface.}
 \label{Fig:semicut}
\end{figure}
\end{center}

\section{Compressionbodies and multiple v.p.-bridge surfaces} \label{sec:compbodies}

\subsection{Compressionbodies} 
In this section we generalize the idea of a compressionbody to our context. 
\begin{definition}
Suppose that $H$ is a closed, connected, orientable surface. We say that $(H \times I, T)$ is a \defn{trivial product compressionbody} or a \defn{product region} if $T$ is isotopic to the union of vertical arcs, and we let $\boundary_\pm (H \times I) = H \times \{\pm 1\}$. If $B$ is a 3--ball and if $T \subset B$ is a (possibly empty) connected, properly embedded, $\boundary$-parallel tree, having at most one interior vertex, then we say that $(B, T)$ is a \defn{trivial ball compressionbody}. We let $\boundary_+ B = \boundary B$ and $\boundary_- B = \nil$.  A \defn{trivial compressionbody} is either a trivial product compressionbody or a trivial ball compressionbody. Figure \ref{Fig:TrivialCompBody} shows both types of trivial compressionbodies.

\begin{center}
\begin{figure}[tbh]
\includegraphics[scale=0.5]{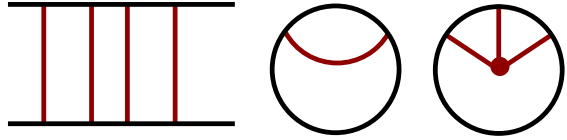}
%\scalebox{0.4}{\input{bridgeboundarydisc1.eps_t}}
\caption{On the left is a trivial product compressionbody; in the center is a trivial ball compressionbody with $T$ an arc; on the right is a trivial ball compressionbody with $T$ a tree having a single interior vertex..}
 \label{Fig:TrivialCompBody}
\end{figure}
\end{center}

A pair $(C,T)$ is a \defn{v.p.-compressionbody} if there is some component denoted $\bdd_+C$ of $\bdd C$ and a collection of pairwise disjoint sc-discs $\mc{D} \subset (C,T)$ for $\boundary_+ C$  such that the result of $\boundary$-reducing $(C, T)$ using $\mc{D}$ is a union of trivial compressionbodies. Observe that trivial compressionbodies are v.p.-compressionbodies as we may take $\mc{D} = \nil$. Figure \ref{Fig: vpcompressionbody} shows two different v.p.-compressionbodies. We will usually represent v.p.-compressionbodies more schematically as in Figure \ref{Fig: arc types}.

The set $\bdd C \setminus \bdd_+C$ is denoted by $\bdd_-C$. If no two discs of $\mc{D}$ are parallel in $C \setminus T$ then $\mc{D}$ is a \defn{complete collection of discs} for $(C,T)$. An edge of $T$ which is disjoint from $\boundary_+ C$ (and so has endpoints on $\boundary_- C$ and the vertices of $T$) is a \defn{ghost arc}. An edge of $T$ with one endpoint in $\boundary_+ C$ and one endpoint in $\boundary_- C$  is a \defn{vertical arc}. A component of $T$ which is an arc having both endpoints on $\boundary_+ C$  is a \defn{bridge arc}. A component of $T$ which is homeomorphic to a circle and is disjoint from $\boundary C$ is called a \defn{core loop}. $C$ is a \defn{compressionbody} if $(C,\nil)$ is a v.p.-compressionbody. A compressionbody $C$ is a \defn{handlebody} if $\boundary_- C = \nil$.  A \defn{bridge disc} for $\boundary_+ C$ in $C$ is an embedded disc in $C$ with boundary the union of two arcs $\alpha$ and $\beta$ such that $\alpha \subset \boundary_+ C$ joins distinct points of $\bdd_+C \cap T$ and $\beta$ is a bridge arc of $T$.
 \end{definition}

\begin{remark}
Suppose that $(C,T)$ is a v.p.-compressionbody and that $(\punct{C}, \punct{T})$ is the result of drilling out the vertices of $T$.  Considering the components of $\boundary \punct{C} \setminus \boundary C$ as components of $\boundary_- C$, we see that $(\punct{C}, \punct{T})$ is also a v.p.-compressionbody having the same complete collection of discs as $(C,T)$. Furthermore, every component of $\punct{T}$ is a vertical arc, ghost arc, bridge arc, or core loop. The ``v.p.'' stands for ``vertex-punctured'' as this notion of compressionbody is a generalization of the compressionbodies used in \cite{TT2}: the v.p.-compressionbody $(\punct{C}, \punct{T})$ satisfies \cite[Definition 2.1]{TT2} with $\Gamma = \punct{T}$ (in the notation of that paper.) The notation ``v.p.'' will also be helpful as a reminder that the first step in calculating many of the various quantities we consider is to drill out the vertices of $T$ and treat them as boundary components of $M$.
\end{remark}

\begin{center}
\begin{figure}[tbh]
\includegraphics[scale=0.4]{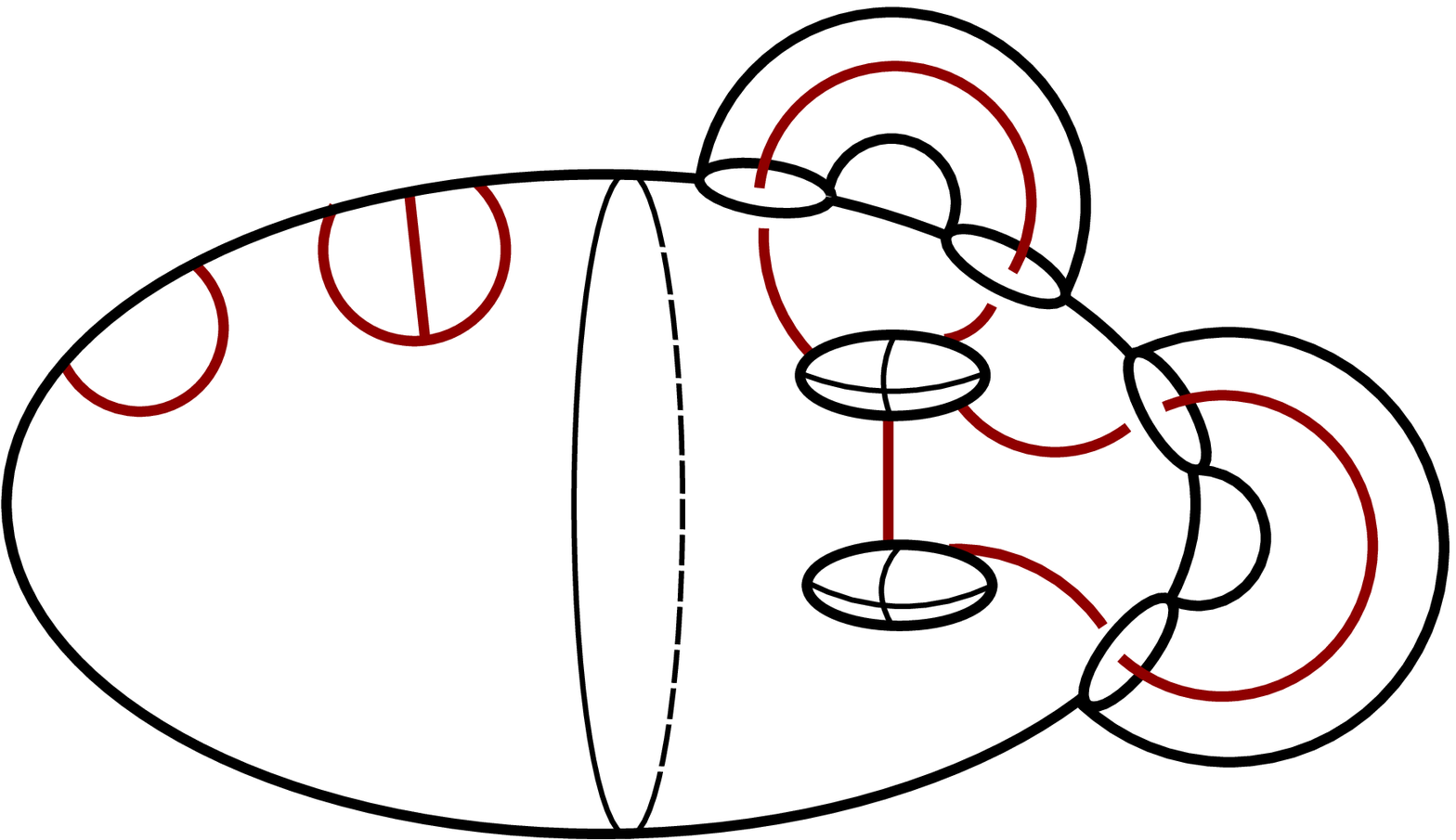}
\includegraphics[scale=0.4]{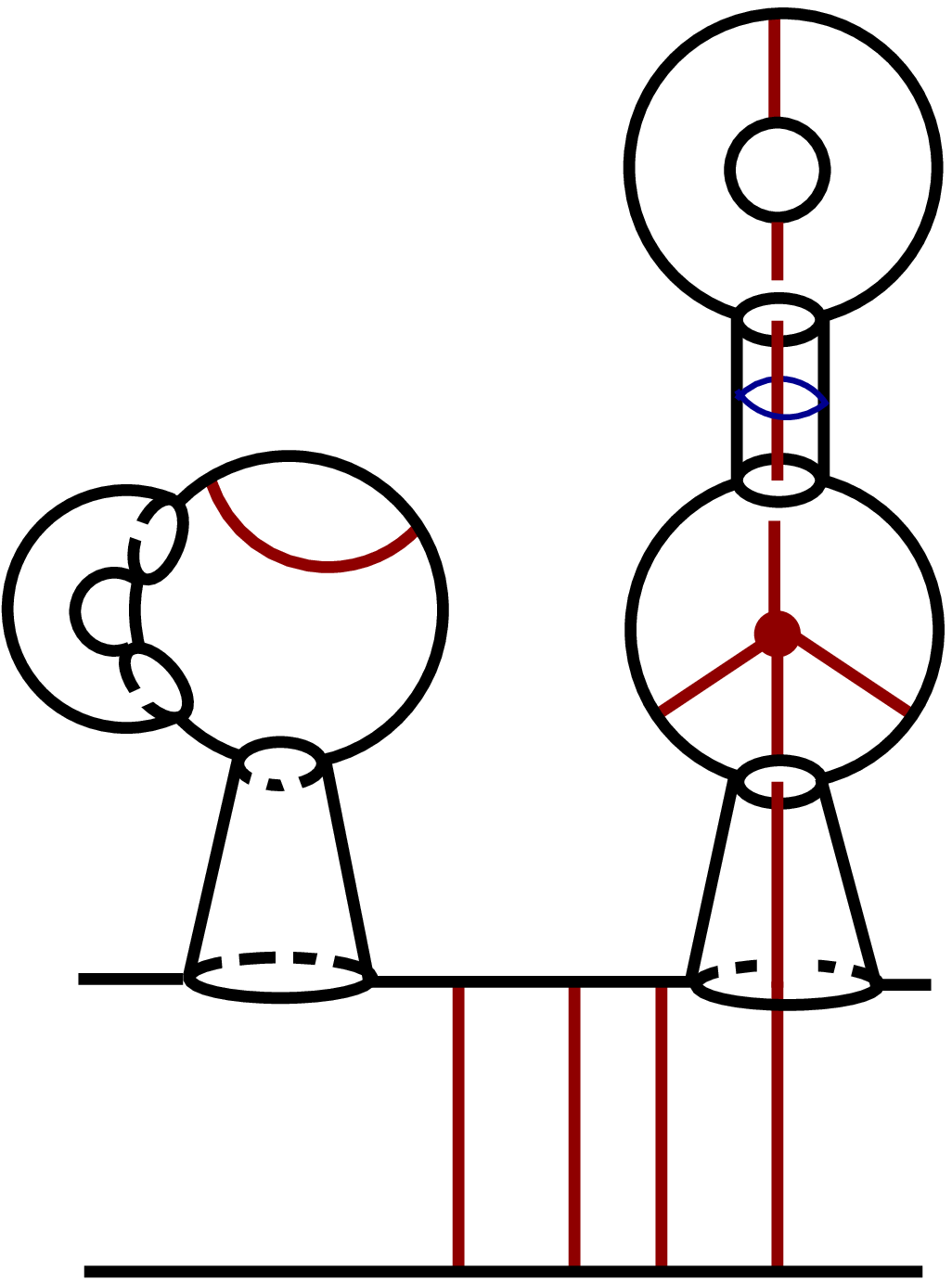}
%%\scalebox{0.4}{\input{bridgeboundarydisc1.eps_t}}
\caption{On the left is an example of a v.p.-compressionbody $(C,T)$ with $\boundary_- C$ the union of spheres. On the right, is an example of a v.p.-compressionbody $(C,T)$ with $\boundary_- C$ the union of two connected surfaces, one of which is a sphere twice-punctured by $T$. }
 \label{Fig:  vpcompressionbody}
\end{figure}
\end{center}

\begin{center}
\begin{figure}[tbh]
\includegraphics[scale=0.4]{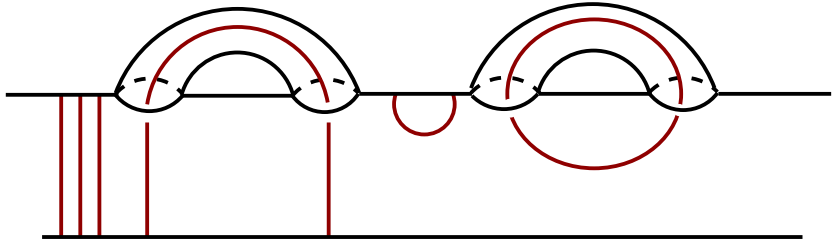}
\caption{A v.p.-compressionbody $(C,T)$. From left to right we have three vertical arcs, one ghost arc, one bridge arc, and one core loop in $T$.}
 \label{Fig:  arc types}
\end{figure}
\end{center}

The next two lemmas establish basic properties of v.p.-compressionbodies.
\begin{lemma}\label{spheres in v.p.-compressionbodies}
Suppose that $(C,T)$ is a v.p.-compressionbody such that no spherical component of $\boundary_- C$ intersects $T$ exactly once. Suppose $P \subset (C,T)$ is a closed surface transverse to $T$. If $D$ is an sc-disc for $P$, then if $|D \cap T| = 1$ either $\boundary D$ is essential on $P \setminus T$ or $\boundary D$ bounds a disc in $P$ intersecting $T$ exactly once. Furthermore, if $P$ is a sphere, then after some sc-compressions it becomes the union of spheres, each either bounding a trivial ball compressionbody in $(C,T)$ or parallel to a component of $\boundary_- C$.
\end{lemma}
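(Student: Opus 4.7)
The plan is to use the complete disc collection $\mc{D}$ from the v.p.-compressionbody structure of $(C,T)$. $\boundary$-reducing along $\mc{D}$ yields a disjoint union of trivial compressionbodies, and since $\partial$-reduction preserves $\boundary_- C$, the hypothesis that no spherical component of $\boundary_- C$ meets $T$ once descends to each trivial piece.

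The key preliminary is that no 2-sphere $\Sigma \subset C$ meets $T$ in exactly one point. I would induct on $|\Sigma \cap \mc{D}|$. When this is positive, every circle of $\Sigma \cap \mc{D}$ cuts $\Sigma$ into subdiscs with $0$ and $1$ punctures; choosing the unpunctured subdisc $E$ minimal under inclusion forces $\interior E$ to be disjoint from $\mc{D}$. Let $D' \subset \mc{D}$ be the corresponding innermost subdisc; compressing $\Sigma$ along $D'$ produces two spheres, exactly one of which still meets $T$ once and has strictly fewer intersections with $\mc{D}$, so induction applies. For the base case $\Sigma \cap \mc{D} = \varnothing$, $\Sigma$ lies in a single trivial compressionbody. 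In a trivial ball compressionbody, $\Sigma$ would bound an inner ball $B'$ whose intersection with $T'$ would be a subtree forced to terminate inside $B'$, impossible for a $\boundary$-parallel tree with all leaves on $\boundary B$ and at most one interior vertex. In a trivial product $(H \times I, T')$ with $T'$ vertical, $\Z/2$-homology forces equal intersection of $\Sigma$ with all vertical arcs (zero when $H \neq S^2$, constant when $H = S^2$), so odd intersection with exactly one arc forces either the hypothesis-excluded case ($H = S^2$ with one vertical arc, $\boundary_- C'$ a sphere meeting $T$ once) or a direct contradiction. The first claim of the lemma follows at once: if $|D \cap T| = 1$ and $\boundary D$ bounded an unpunctured disc $E \subset P \setminus T$, then $D \cup E$ would be exactly such a forbidden sphere.

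For the second claim, I would induct on $|P \cap \mc{D}|$ after isotoping $P$ to minimize this quantity. If $|P \cap \mc{D}| > 0$, an innermost circle $c$ of $P \cap \mc{D}$ in $\mc{D}$ bounds a subdisc $D' \subset \mc{D}$ with $|D' \cap T| \leq 1$ and $\interior D' \cap P = \varnothing$; either $D'$ is an sc-disc for $P$ and sc-compressing along $D'$ breaks $P$ into two spheres each with strictly smaller $\mc{D}$-intersection (to which induction applies), or $c$ bounds a disc in $P$ meeting $T$ in $\leq 1$ point and an isotopy across $D'$ removes $c$ (using the preliminary to rule out a swept-out region containing a once-punctured sphere). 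When $|P \cap \mc{D}| = 0$, $P$ lies in a single trivial compressionbody: in a trivial product, $P$ is either parallel to a spherical component of $\boundary_- C$ or bounds a ball disjoint from $T$; in a trivial ball compressionbody, $P$ bounds a ball $B' \subset B$ whose intersection with $T$ is a $\boundary$-parallel subtree with at most one interior vertex (again using the preliminary). The main obstacle I expect is the preliminary, in particular the $\Z/2$-homology case analysis for trivial products with $H = S^2$, where the $\boundary_- C$ hypothesis is used critically.
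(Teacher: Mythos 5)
Your argument for the preliminary claim (no sphere in $(C,T)$ meets $T$ in exactly one point) takes a genuinely different route from the paper. The paper cuts $C$ along the once-punctured sphere $E \cup D$, shows via a $\pi_1$-injectivity argument that every component of $\boundary_- C$ lying in the bounded region $W$ is itself a sphere, and then runs a graph-traversal argument over the ghost arcs and spherical boundary components inside $W$ to produce a once-punctured sphere in $\boundary_- C$. You instead induct on $|\Sigma \cap \mc{D}|$ and, in the base case, analyze trivial compressionbodies directly via counting and $\Z/2$-intersection numbers. Both approaches work; yours is arguably more self-contained because it avoids appealing to the separating property of spheres in $C$ and to $\pi_1$-injectivity of $\boundary_- C$. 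One phrasing issue: you say to take the unpunctured subdisc $E \subset \Sigma$ minimal under inclusion and then let $D'$ be the "corresponding innermost subdisc" of $\mc{D}$, but $\boundary E$ need not be innermost on $\mc{D}$, so $D'$ as you describe it may not have interior disjoint from $\Sigma$. The standard fix is to instead choose the circle of $\Sigma \cap \mc{D}$ innermost on $\mc{D}$; that gives a compressible $D'$ directly, and the rest of the induction goes through.

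For the second claim there is a genuine gap, and it is in your base case. When $|P \cap \mc{D}| = 0$ the sphere $P$ lies in a single trivial compressionbody, but you assert without justification that in a trivial ball compressionbody $(B,T_B)$ the sphere $P$ bounds a ball $B'$ with $T_B \cap B'$ a connected $\boundary$-parallel subtree with at most one interior vertex, and that in a trivial product $P$ either bounds a ball disjoint from $T$ or is parallel to a spherical component of $\boundary_- C$. Neither of these is automatic once $P$ is allowed to meet $T$ more than twice: for instance, if $T_B$ is a single $\boundary$-parallel arc in $B^3$ and $P$ meets it four times, then $T_B \cap B'$ is two arcs and $(B', T_B \cap B')$ is not a trivial ball compressionbody, nor is $P$ $\boundary$-parallel. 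Your induction reduces $|P \cap \mc{D}|$ but does not control $|P \cap T|$, so the base case cannot simply be read off. The paper sidesteps this by first c-compressing $P$ as much as possible, so that by the time the disc collection $\Delta$ is minimized every component of $P'$ is c-incompressible; it is that c-incompressibility which makes the "standard results" about spheres in trivial compressionbodies apply. To repair your argument you would need to interleave the c-compressions with your $\mc{D}$-reduction (or do them first, as the paper does), and re-minimize $|P \cap \mc{D}|$ afterward.
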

\begin{proof}
Suppose first that there is an sc-disc $D$ for $P$ such that $\boundary D$ bounds an unpunctured disc $E \subset P$ but $|D \cap T| = 1$. Then $E \cup D$ is a sphere in $(C,T)$ intersecting $T$ exactly once. Every sphere in $C$ must separate $C$. Let $W \subset C \setminus (E \cup D)$ be the component disjoint from $\boundary_+ C$. The fundamental group of every component of $\boundary_- C$ injects into the fundamental group of $C$ and every curve on every component of $\boundary_- C$ is homotopic into $\boundary_+ C$. Thus, any component of $\boundary_- C$ contained in $W$ must be a sphere. Drilling out the vertices of $T$ along with all edges of $T$ disjoint from $\boundary_+ C$ creates a new v.p.-compressionbody $(C', T')$. As before, any essential curve in $\boundary_- C'$ is non-null-homotopic in $C'$ and is homotopic into $\boundary_+ C = \boundary_+ C'$. Thus, $\boundary_- C' \cap W$ can contain no essential curves. There is an edge $e \subset (T \cap W)$ with an endpoint in $D$. Beginning with $e$, traverse a path across edges of $T \cap W$ and components of $\boundary_- C \cap W$ (each necessarily a sphere) so that no edge of $T \cap W$ is traversed twice. The path terminates when it reaches a component of $\boundary_- C \cap W$ which is a once-punctured sphere, contrary to our hypotheses. Thus, no such sc-disc $D$ can exist.

Suppose now that $P$ is a sphere. Use c-discs to compress $P$ as much as possible. By the previous paragraph, we end up with the union $P'$ of spheres, each intersecting $T$ no more times than does $P$.  Let $\Delta$ be a complete collection of discs for $(C, T)$ chosen so as to minimize $|\Delta \cap P'|$ up to isotopy of $\Delta$. If some component $P_0$ of $P'$ is disjoint from $\Delta$, then it is contained in the union of trivial v.p.-compressionbody obtained by $\boundary$-reducing $(C, T)$ using $\Delta$. Standard results from 3-manifold topology show that $P_0$ is either $\boundary$-parallel to a component of $\boundary_- C$ or is the boundary of a trivial ball compressionbody in $(C, T)$. 

If $\Delta \cap P' \neq \nil$, then it consists of circles. Since we have minimized $|\Delta \cap P'|$ up  to isotopy, each circle of $\Delta \cap P'$ which is innermost on $\Delta$ bounds a semi-compressing or semi-cut disc $D \subset \Delta$ for $P'$. By the previous paragraph, compressing $P'$ using $D$ creates an additional component of $P'$ and preserves the property that each component of $P'$ intersects $T$ no more times than does $P$. The result follows by repeatedly performing such compressions until $P'$ becomes disjoint from $\Delta$. 
\end{proof}

\begin{lemma}\label{filling and puncturing}
Suppose that $(C,T)$ is a (3-manifold, graph)-pair such that no component of $\boundary_- C$ is a sphere intersecting $T$ exactly once. Then the following hold:
\begin{enumerate}
\item If $P \subset \boundary_- C$ is an unpunctured sphere or a  twice-punctured sphere, the result $(\wihat{C}, \wihat{T})$ of capping off $P$ with a trivial ball compressionbody is still a v.p.-compressionbody.
\item If $p$ is a point in the interior of $C$ (possibly in $T$), then the result of removing an open regular neighborhood of $p$ from $C$ (and $T$ if $p \in T$) is a v.p.-compressionbody.
\end{enumerate}
\end{lemma}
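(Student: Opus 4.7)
The proof proposal is as follows.

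For part (1), the same complete collection $\mc{D}$ of sc-discs for $(C,T)$ will serve as a complete collection for $(\wihat{C},\wihat{T})$. Since every disc of $\mc{D}$ has boundary on $\boundary_+ C = \boundary_+ \wihat{C}$, which is untouched by the capping operation, the discs of $\mc{D}$ remain pairwise disjoint sc-discs in $\wihat{C}$. After $\boundary$-reducing $(C,T)$ along $\mc{D}$, the sphere $P$ appears as the $\boundary_-$ component of some trivial product compressionbody $(P \times I, \text{vertical arcs})$, with $0$ or $2$ vertical arcs matching $|P \cap T|$. Capping off $P \times \{-1\}$ with the trivial ball compressionbody $(B, \tau)$ (where $\tau$ is empty or a $\boundary$-parallel arc connecting the two punctures) turns this single component into a $3$-ball in which $T \cup \tau$ is either empty or a single $\boundary$-parallel arc, i.e., a trivial ball compressionbody. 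All other components of the reduction are unaffected.

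For part (2), if $p$ is an interior vertex of $T$ then the conclusion is an immediate instance of the remark preceding the lemma (drilling out interior vertices preserves the v.p.-compressionbody structure, and the hypothesis guarantees the new boundary sphere has at least three punctures). So assume $p$ is not a vertex. After a small isotopy of $\mc{D}$ we may assume $p$ lies in the interior of some trivial compressionbody $Q$ produced by $\boundary$-reducing $(C,T)$ along $\mc{D}$. Choose an embedded arc $\alpha \subset Q$ from $p$ to a point of $\boundary_+ Q$, disjoint from $T$ when $p \notin T$ and running along the edge of $T$ through $p$ when $p \in T$; such an $\alpha$ exists because every edge of $T \cap Q$ has an endpoint on $\boundary_+ Q$.

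I would then build a ``cup-shaped'' disc $D \subset Q \setminus N(p)$ whose boundary is a small circle on $\boundary_+ Q$ encircling the $\boundary_+$-endpoint of $\alpha$, and whose interior consists of a thin ``wall'' following $\alpha$ together with a small ``bottom'' disc placed just past $N(p)$. When $p \notin T$, take $D$ disjoint from $T$, so $D$ is a semi-compressing disc; when $p \in T$, arrange the bottom disc to transversely cross the edge of $T$ once on the side of $N(p)$ away from $\boundary_+$, so $D$ is a semi-cut disc with $|D \cap T| = 1$. In both cases $D$ is not properly isotopic into $\boundary_+ C$ inside $C \setminus N(p)$, because the product region across which such an isotopy would sweep now contains the removed ball $N(p)$.

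Set $\mc{D}' = \mc{D} \cup \{D\}$. $\boundary$-reducing $(C \setminus N(p), T \setminus N(p))$ along $\mc{D}'$ produces all the original trivial compressionbodies except $Q$, plus two new components: the exterior piece $Q'$, homeomorphic to $Q$ as it is $Q$ minus a ball attached to a disc on $\boundary_+ Q$, with $T \cap Q'$ still a collection of vertical arcs (in the product case) or a $\boundary$-parallel tree (in the ball case) — possibly with one edge truncated; and the interior piece, an $S^2 \times I$ shell sandwiched between $D$ and $\boundary N(p)$, containing either no arcs of $T$ or exactly two arcs, each running from one boundary sphere to the other, hence isotopic to vertical arcs, so a trivial product compressionbody. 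The main obstacle is the case analysis needed to verify that $Q'$ and the shell are trivial compressionbodies across the subcases ($Q$ a product vs.\ a ball compressionbody, $p\in T$ vs.\ $p \notin T$), which is routine but requires careful checking of $\boundary$-parallelism after truncation.
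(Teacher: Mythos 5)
Your approach to both parts follows the paper's own argument quite closely: for part (1) you cap off a product piece, and for part (2) you pick a complete collection $\mc{D}$, find the trivial piece $Q$ containing $p$, and construct an extra disc $D$ (the paper calls it $E$) around an arc from $p$ to $\boundary_+ Q$ to add to the collection. Part (1) is fine as written.

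For part (2), however, your justification that $D$ is an sc-disc is incomplete in a way that matters. You argue that $D$ cannot be isotoped into $\boundary_+ C$ ``because the product region across which such an isotopy would sweep now contains the removed ball $N(p)$.'' That reasoning rules out a parallelism across the \emph{interior} of the cup, but the sc-disc definition also requires ruling out a parallelism across the \emph{exterior}. When $(Q,T_Q) = (B^3,\nil)$, or when $(Q,T_Q)=(B^3,\text{arc})$ with $p$ on the arc, the exterior of the cup is a $3$-ball meeting $T$ in at most one unknotted arc with one endpoint on $D$ and one on $\boundary_+ Q$, so $D$ \emph{is} $\boundary$-parallel and is not a semi-compressing or semi-cut disc. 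The paper sidesteps this by treating the subcases in which removal of $\eta(p)$ from $(Q,T_Q)$ already yields a trivial compressionbody separately (so that no extra disc is needed and $\mc{D}$ alone remains a complete collection), and only constructing $E$ when $(Q,T_Q)$ is not one of these; you dispose of the vertex case up front but not the $(B^3,\text{arc through }p)$ and $(B^3,\nil)$ cases, and in exactly those cases your constructed $D$ fails to be an sc-disc. A second, smaller omission: the endpoint of $\alpha$ on $\boundary_+ Q$ may lie in a remnant of $\mc{D}$ rather than in $\boundary_+ C$, in which case $\boundary D$ must first be isotoped off those remnants before $D$ qualifies as a disc based on $\boundary_+ C$; the paper explicitly performs this isotopy. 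Neither issue undermines the overall strategy, but both need to be patched for the argument to be correct.
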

\begin{proof}
First, suppose that $P \subset \boundary_- C$ is a zero or twice-punctured sphere. Let $\Delta$ be a complete collection of sc-discs for $(C, T)$. Let $(C', T')$ be the result of $\boundary$-reducing $(C, T)$ using $\Delta$. Then $(C', T')$ is the union of v.p.-compressionbodies one of which is a product v.p.-compressionbody containing $P$. Capping off $P$ with a trivial ball compressionbody converts this product v.p.-compressionbody into a trivial ball compressionbody. Thus, the result of $\boundary$-reducing $(\wihat{C}, \wihat{T})$ using $\Delta$ is the union of trivial v.p.-compressionbodies. Thus, $(\wihat{C}, \wihat{T})$ is a v.p.-compressionbody.

Now suppose that $p$ is a point in the interior of $C$. Let $\Delta$ be a complete collection of sc-discs for $(C,T)$. By general position, we may isotope $\Delta$ to be disjoint from $T$. Let $(C', T')$ be the result of $\boundary$-reducing $(C,T)$ using $\Delta$. Each component of $(C', T')$ is a trivial v.p.-compressionbody, one of which $(W, T_W)$ contains $p$. If $(W, T_W)$ is a trivial ball compressionbody either with $T_W$ an arc containing $p$ or with $p$ an interior vertex of $T_W$, then the result of removing $\eta(p)$ from $(W, T_W)$ is again a trivial compressionbody, and the result follows. Suppose, therefore, that if $p \in T_W$, then either $(W, T_W) \neq (B^3, \text{ arc})$ or $p$ is not a vertex of $T_W$.

 If $p \in T$, there is a sub-arc of an edge of $T_W$ joining $\boundary_+ W$ to $p$. Let $E$ be the frontier of a regular neighborhood of that edge. Then $E$ is a semi-cut disc for $\boundary_+ W$ cutting off a v.p.-compressionbody from $(W, T_W)$ which is $(S^2 \times I, \text{two vertical arcs})$. (The fact that $E$ is a semi-cut disc follows from the considerations of the previous paragraph.) We may isotope $E$ so that $\boundary E$ is disjoint from the remnants of $\Delta$ in $\boundary_+ W$. The disc $E$ is then a cut disc or semi-cut disc for $(C, T)$ such that $\Delta \cup E$ is a collection of s.c.-discs such that $\boundary$-reducing $(C\setminus \eta(p), T\setminus \eta(p))$ is the union of trivial compressionbodies. Hence, $(C\setminus \eta(p), T\setminus \eta(p))$ is a v.p.-compressionbody. 

If $p \not\in T$, the proof is similar except we can pick any (tame) arc joining $\boundary_+ C$ to $p$ which is disjoint from $T$. 
\end{proof}

\begin{lemma}\label{Lem: Invariance}
Suppose that $(C,T)$ is a v.p.-compressionbody such that no component of $\boundary_- C$ is a 2-sphere intersecting $T$ exactly once. The following are true:
\begin{enumerate}
\item\label{it: no sc} $(C,T)$ is a trivial compressionbody if and only if there are no sc-discs for $\boundary_+ C$.
\item\label{it: no sc for neg boundary} There are no c-discs for $\boundary_- C$.
\item\label{it: reducing} If $D$ is an sc-disc for $\boundary_+ C$, then reducing $(C,T)$ using $D$ is the union of v.p.-compressionbodies. Furthermore, there is a complete collection of discs for $(C,T)$ containing $D$.  
\end{enumerate}
\end{lemma}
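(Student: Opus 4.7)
My plan is to prove the three parts in the stated order, with later parts drawing on earlier ones. Throughout, the central tool is the definition itself: a v.p.-compressionbody comes equipped with a complete collection $\mc{D}$ of pairwise disjoint sc-discs for $\boundary_+ C$ whose $\boundary$-reduction of $(C,T)$ yields a disjoint union of trivial compressionbodies.

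For (1), one direction is immediate from the definition (take $\mc{D}=\varnothing$). For the reverse, I would verify by case analysis that trivial compressionbodies admit no sc-disc for $\boundary_+$. In the trivial ball case $(B^3,T)$, any candidate $D$ has $\boundary D$ bounding a disc $E\subset \boundary B^3$, and the irreducibility of $B^3$ together with the $\boundary$-parallelism and at-most-one-interior-vertex structure of $T$ forces the resulting ball to meet $T$ either trivially or in a single unknotted arc, contradicting condition~(3) of the sc-disc definition. In the trivial product case $(H\times I,\text{vertical arcs})$, the deformation retraction of $C\setminus T$ onto $\boundary_+C\setminus T$ gives $\pi_1$-injectivity, which together with the hypothesis that no spherical component of $\boundary_- C$ meets $T$ exactly once rules out each of the four sc-disc types.

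For (2), I would argue by contradiction. Suppose $D$ is a c-disc for $\boundary_- C$, and pick $\mc{D}$ so as to minimize $|D\cap \mc{D}|$ up to isotopy of $D$. If $D$ is disjoint from $\mc{D}$, then $D$ lies in a component $(W,T_W)$ of the $\boundary$-reduced pair. A trivial ball compressionbody has empty negative boundary, so $(W,T_W)$ must be a trivial product, and the $\pi_1$-argument from~(1) rules out a c-disc for $\boundary_- W$. If $D\cap \mc{D}\neq \varnothing$, a standard innermost-disc/outermost-arc argument on $D\cap \mc{D}$ strictly reduces the intersection number, contradicting minimality. For the cut-disc subcases I would invoke Lemma \ref{spheres in v.p.-compressionbodies} and the hypothesis on $\boundary_- C$ to track the sc-disc type of the intermediate pieces.

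For (3), let $D$ be an sc-disc for $\boundary_+ C$ and choose $\mc{D}$ so as to minimize $|D\cap\mc{D}|$. If $D$ is disjoint from $\mc{D}$ and not parallel to any disc in $\mc{D}$, then $D$ sits inside a trivial compressionbody component of the $\boundary$-reduction, where it remains an sc-disc for $\boundary_+$, contradicting~(1); so either $D$ is parallel into $\mc{D}$ (in which case $D$ is essentially already in the collection) or $D\cap\mc{D}\neq\varnothing$. In the latter case, innermost circles on $D$ are eliminated by surgering a disc of $\mc{D}$, and outermost arcs on $D$ permit a $\boundary$-compression of a disc of $\mc{D}$ that strictly decreases total intersection with $D$ while keeping $\mc{D}$ a complete collection. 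Iterating yields a complete collection containing $D$, and $\boundary$-reducing $(C,T)$ along $D$ alone then produces a pair whose further $\boundary$-reduction along the remaining discs is a union of trivial compressionbodies, so each of its components is a v.p.-compressionbody. The main obstacle I anticipate is precisely this outermost-arc step: after $\boundary$-compressing a disc of $\mc{D}$ along an outermost sub-disc of $D$, I must verify that the two resulting discs are again sc-discs and that the modified collection remains complete, which requires case analysis by sc-disc type and by the position of the intersection arc relative to $T$.
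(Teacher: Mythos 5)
The structure of your argument for parts (1) and (2) tracks the paper's reasonably well: part (1) is handled as "one direction is immediate, the converse is a case analysis on trivial compressionbodies," and part (2) reduces a putative c-disc for $\boundary_-C$ to one disjoint from a complete collection $\Delta$ and then observes it must be $\boundary$-parallel in a trivial product piece. (One small inaccuracy in (2): since $\boundary D\subset\boundary_-C$ and $\boundary\mc{D}\subset\boundary_+C$ are disjoint, $D\cap\mc{D}$ consists only of circles; there is no outermost-arc case there.)

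Part (3) is where the gap lies, and it is substantive. You write: ``If $D$ is disjoint from $\mc{D}$ and not parallel to any disc in $\mc{D}$, then $D$ sits inside a trivial compressionbody component of the $\boundary$-reduction, where it remains an sc-disc for $\boundary_+$, contradicting (1); so either $D$ is parallel into $\mc{D}$ \ldots or $D\cap\mc{D}\ne\varnothing$.'' This trichotomy omits a real possibility: $D$ can be disjoint from $\mc{D}$, not parallel to any single disc of $\mc{D}$, and nevertheless become $\boundary$-parallel in the reduced pair $(C',T')$, because a disc in $\boundary_+C'$ may run over several scars of $\mc{D}$. (Already in a genus-two handlebody with $T=\nil$: let $\mc{D}$ be two non-separating discs cutting $C$ to a ball, and $D$ the separating disc disjoint from both. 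Then $D$ is trivially $\boundary$-parallel in the ball, yet parallel to neither disc of $\mc{D}$.) Your claim that ``it remains an sc-disc'' is exactly what fails, so the contradiction with (1) evaporates. The paper handles this case head-on: if $|D\cap\Delta|=0$, then $D$ is $\boundary$-parallel in $(C',T')$ and the conclusion follows directly, without contradiction.

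A second issue: your plan for the remaining cases surgers $\mc{D}$ (innermost subdiscs of $D$ used to compress a disc of $\mc{D}$, outermost arcs used to $\boundary$-compress a disc of $\mc{D}$). You correctly flag this as the hard step, but it is genuinely harder than the paper's route, for two reasons you do not address. First, surgering or $\boundary$-compressing a disc of $\mc{D}$ along a piece of $D$ that meets $T$ can produce a disc with two punctures, which is then not an sc-disc; the bookkeeping of punctures under these operations needs an argument analogous to Lemma~\ref{spheres in v.p.-compressionbodies}, and without the capping-off step (below) some configurations are not removable. Second, and more structurally, the paper does not modify $\Delta$ at all: it chooses the innermost/outermost pieces on $\Delta$ and surgers/$\boundary$-compresses $D$, then inducts on $|D\cap\Delta|$. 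This preserves completeness of $\Delta$ for free and defers all the delicacy to Lemma~\ref{spheres in v.p.-compressionbodies}. Crucially, before any of this the paper first caps off all zero- and twice-punctured sphere components of $\boundary_-C$ (Lemma~\ref{filling and puncturing}), which is what makes the sphere created by an innermost-circle surgery forcibly inessential; your proposal skips this reduction, and without it the isotopy reducing $|D\cap\mc{D}|$ is not available. You should either adopt the paper's ``cap off, then surger $D$'' strategy, or supply the detailed puncture and completeness bookkeeping that your ``surger $\mc{D}$'' strategy demands, together with the missing $\boundary$-parallel case.
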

\begin{proof}
Proof of (1): From the definition of v.p.-compressionbody, if there is no sc-disc for $\boundary_+ C$, then $(C, T)$ is a trivial compressionbody. The converse requires a little more work, but follows easily from standard results in 3-dimensional topology.

Proof of (2): This is similar to the proof of Lemma \ref{spheres in v.p.-compressionbodies}. Suppose that $\boundary_- C$ has a c-disc $P$. As in the previous lemma, there is no sc-disc $D$ for $P$ such that $\boundary D$ bounds an unpunctured disc on $P$ but $|D \cap T| = 1$. Consequently, compressing $P$ using any sc-disc $D$ creates a new disc $P'$ intersecting $T$ no more often than did $P$ and with $\boundary P' = \boundary P$. Since $\boundary P$ is essential on $\boundary_- C \setminus T$, the disc $P'$ is a c-disc for $\boundary_- C$. Thus, we may assume that $P$ is disjoint from a complete collection of discs for $(C, T)$. It follows easily that $P$ is $\boundary$-parallel in $(C, T)$ and so is not a c-disc for $\boundary_- C$, contrary to our assumption.

Proof of (3): Let $(C,T)$ be a v.p.-compressionbody and suppose that $D$ is an sc-disc for $\boundary_+ C$.  Let $(\wihat{C}, \wihat{T})$ be the result of capping off all zero and twice-punctured sphere components of $\boundary_- C$ with trivial ball compressionbodies. By Lemma \ref{filling and puncturing}, $(\wihat{C}, \wihat{T})$ is a v.p.-compressionbody. 

If $D$ is not an sc-disc for $(\wihat{C}, \wihat{T})$, it is $\boundary$-parallel. Boundary-reducing $(\wihat{C}, \wihat{T})$ with $D$ results in two v.p.-compressionbodies: one a trivial ball compressionbody and the other equivalent to $(\wihat{C}, \wihat{T})$. Removing regular neighborhoods of certain points in the interior of $\wihat{C}$, converts $(\wihat{C}, \wihat{T})$ back into $(C, T)$. By Lemma \ref{filling and puncturing}, $\boundary$-reducing $(C,T)$ using $D$ results in v.p.-compressionbodies. A collection of sc-discs for those compressionbodies, together with $D$, gives a collection $\Delta$ of sc-discs such that $\boundary$-reducing $(C,T)$ using $\Delta$ results in trivial v.p.-compressionbodies. Thus, the lemma holds if $D$ is $\boundary$-parallel in $(\wihat{C}, \wihat{T})$. 

Now suppose that $D$ is not $\boundary$-parallel in $(\wihat{C}, \wihat{T})$. Choose a complete collection $\Delta$ of sc-discs such that $\boundary$-reducing $(\wihat{C}, \wihat{T})$ using $\Delta$ results in the union $(C', T')$ of  trivial v.p.-compressionbodies. Out of all possible choices, choose $\Delta$ to intersect $D$ minimally. We prove the lemma by induction on $|D \cap \Delta|$. 

If $|D \cap \Delta| = 0$, then $D$ is $\boundary$-parallel in $(C', T')$. In this case, the result follows easily. Suppose, therefore, that $|D \cap \Delta| \geq 1$. The intersection $D\cap \Delta$ is the union of circles and arcs. 

Suppose, first, that there is a circle of intersection. Let $\zeta \subset D \cap \Delta$ be innermost on $\Delta$. Compressing $D$ using the innermost disc $E \subset \Delta$ results in a disc $D'$ and a sphere $P$. By Lemma \ref{spheres in v.p.-compressionbodies}, if $|E \cap T| = 1$, then $|D' \cap T| = 1$ and $|P \cap T| = 2$. On the other hand, if $|E \cap T|=0$, then both $D'$ and $P$ are disjoint from $T$. By Lemma \ref{spheres in v.p.-compressionbodies}, there is a sequence of sc-compressions of $P$ which result in zero and twice-punctured spheres. (If $|E \cap T| = 0$, there are no twice-punctured spheres.) These spheres either bound trivial ball compressionbodies in $(\wihat{C}, \wihat{T})$ or are parallel to components of $\boundary_- \wihat{C}$. But since $\boundary_- \wihat{C}$ contains no zero or twice-punctured sphere components, the latter situation is impossible. The sphere $P$ is thus obtained by tubing together inessential spheres. It follows that $P$ is also inessential: it bounds a 3-ball either disjoint from $T$ or intersecting $T$ in an unknotted arc. Since $D$ is a zero or once-punctured disc, this ball gives an isotopy of $\Delta$ reducing the intersection with $D$, a contradiction.  Thus, there are no circles of intersection in $D \cap \Delta$.

Let $\zeta$ be an arc of intersection in $D \cap \Delta$ which is outermost in $\Delta$. Let $E \subset \Delta$ be the outermost disc. We may choose $\zeta$ and $E$ so that $E$ is disjoint from $T$. Boundary-reducing $D$ using $E$ results in two discs $D'$ and $D''$, at most one of which is once-punctured. Observe that a small isotopy makes both disjoint from $D$. By our inductive hypotheses applied to $D$ and $D'$, the result of $\boundary$-reducing $(C,T)$ along both of them (either individually or together) is the union $(W, T_W)$ of v.p.-compressionbodies. Remove the regular neighborhoods of points corresponding to the zero and twice-punctured sphere components of $\boundary_- C$. By Lemma \ref{filling and puncturing}, we still have v.p.-compressionbodies, which we continue to call $(W, T_W)$. Let $\mc{D}$ be the union of sc-discs for $\boundary_+ W$, including $D'$ and $D''$, such that the result of $\boundary$-reducing $(W, T_W)$ using $\mc{D}$ is the union of trivial compressionbodies. The disc $D$ is contained in one of these trivial compressionbodies and, therefore, must be $\boundary$-parallel. The result of $\boundary$-reducing $(C,T)$ along $D \cup \mc{D}$ is then the union of trivial compressionbodies, as desired.
 \end{proof}

\begin{remark} \label{rmk:semicut}
It is not necessarily the case that if $(C,T)$ is an irreducible v.p.-compressionbody then there is no sc-disc for $\boundary_- C$. To see this, let $(C,T)$ be the result of removing the interior of a regular neighborhood of a point on a vertical arc in an irreducible v.p.-compressionbody $(\tild{C},\tild{T})$. Then there is an sc-disc for $\boundary_- C$ which is boundary-parallel in $\tild{C} \setminus \tild{T}$ and which cuts off from $(C,T)$ a compressionbody which is $S^2 \times I$ intersecting $T$ in two vertical arcs. See the top diagram in Figure \ref{Fig:  CutPrimeNegBdy}. Similarly, if $\boundary_- C$ contains 2-spheres disjoint from $T$, there will be a semi-compressing disc for any component of $\boundary_- C$ which is not a 2-sphere disjoint from $T$.

We also cannot drop the hypothesis that no component of $\boundary_- C$ is a sphere intersecting $T$ exactly once. The bottom diagram in Figure \ref{Fig: CutPrimeNegBdy} shows an sc-disc with the property that boundary-reducing the v.p.-compressionbody along that disc does not result in the union of v.p.-compressionbodies.
\end{remark}

\begin{center}
\begin{figure}[tbh]
\includegraphics[scale=0.5]{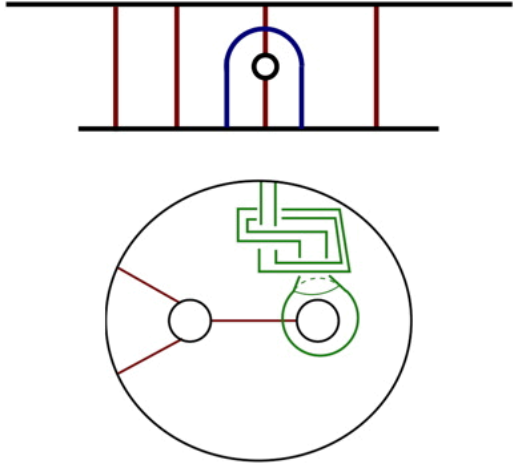}
%%\scalebox{0.4}{\input{bridgeboundarydisc1.eps_t}}
\caption{Above is an example of a v.p.-compressionbody $(C,T)$ where $\boundary_- C$ has a semi-cut disc (shown in blue). Below is an example of an sc-disc with the property that boundary-reducing the v.p.-compressionbody along that disc does not result in the union of v.p.-compressionbodies.  }
\label{Fig:  CutPrimeNegBdy}
\end{figure}
\end{center}

In what follows, we will often use Lemma \ref{Lem: Invariance} without comment.

 \subsection{Multiple v.p.-bridge surfaces}\label{sec: multiple}

The definition of a multiple v.p.-bridge surface for the pair $(M,T)$ which we are about to present is  a version of Scharlemann and Thompson's ``generalized Heegaard splittings'' \cite{ST-thin} in the style of \cite{HS01}, but using v.p.-compressionbodies. We will also make use of orientations in a similar way to what shows up in Gabai's definition of thin position \cite{G3} and the definition of Johnson's ``complex of surfaces'' \cite{Johnson}.

 \begin{definition}\label{Def: multiple bridge surfaces}
A  connected closed surface $H \subset (M,T)$ is a  \defn{v.p.-bridge surface} for the pair $(M,T)$ if $(M,T)\setminus H$ is the union of two distinct v.p.-compressionbodies $(H_\up, T_\up)$ and $(H_\down, T_\down)$ with $H = \boundary_+ H_\up = \boundary_+ H_\down$. If $T = \nil$, then we also call $H$ a \defn{Heegaard surface} for $M$. 

A  \defn{multiple v.p.-bridge surface} for $(M,T)$ is a closed (possibly disconnected) surface $\mc{H} \subset (M,T)$ such that:
\begin{itemize}
\item $\mc{H}$ is the disjoint union of $\mc{H}^-$ and $\mc{H}^+$, each of which is the union of components of $\mc{H}$;
\item $(M,T)\setminus \mc{H}$ is the union of embedded v.p.-compressionbodies $(C_i, T_i)$ with $\mc{H}^- \cup \boundary M= \bigcup \boundary_- C_i$ and $\mc{H}^+ = \bigcup \boundary_+ C_i$;
\item Each component of $\mc{H}$ is adjacent to two distinct compressionbodies.
\end{itemize}
If $T = \nil$, then $\mc{H}$ is also called a \defn{multiple Heegaard surface} for $M$. The components of $\mc{H}^-$ are called \defn{thin surfaces} and the components of $\mc{H}^+$ are called \defn{thick surfaces}. We denote the set of multiple v.p.-bridge surfaces for $(M,T)$ by $\vpH(M,T)$. 
\end{definition}

Note that each component of $\mc{H}^+$ is a v.p.-bridge surface for the component of $(M,T)\setminus \mc{H}^-$ containing it. In particular, if $H \in \vpH(M,T)$ is connected, then it is a v.p.-bridge surface and $H^- = \nil$. Also, observe that the components of $\boundary M$ are not considered to be thin surfaces; the surfaces $\boundary M$ and $\mc{H}^-$ play different roles in what follows. We now introduce orientations and flow lines.

\begin{definition}
Suppose that $\mc{H}$ is a multiple v.p.-bridge surface for $(M,T)$. Suppose that each component of $\mc{H}$ is given a transverse orientation so that the following hold:
\begin{itemize}
\item If $(C,T_C)$ is a component of $(M,T) \setminus \mc{H}$ then the transverse orientations of the components of $\boundary_- C \cap \mc{H}^-$ either all point into or all point out of $C$.
\item If $(C,T_C)$ is a component of $(M,T) \setminus \mc{H}$, then if the transverse orientation of $\boundary_+ C$ points into (respectively, out of) $C$, then the transverse orientations of the components of $\boundary_- C \cap \mc{H}^-$ point out of (respectively, into) $C$.
\end{itemize}
A \defn{flow line} is a non-constant oriented path in $M$ transverse to $\mc{H}$, not disjoint from $\mc{H}$, and always intersecting $\mc{H}$ in the direction of the transverse orientation. If $S_1$ and $S_2$ are components of $\mc{H}$, then a flow line from $S_1$ to $S_2$ is a flow line which starts at $S_1$ and ends at $S_2$. The multiple v.p.-bridge surface $\mc{H}$ is an \defn{oriented} multiple v.p.-bridge surface if each component of $\mc{H}$ has a transverse orientation as above and there are no closed flow lines.

If there is a flow line from a thick surface $H\subset \mc{H}^+$ to a thick surface $J \subset \mc{H}^+$, then we may consider $J$ to be \defn{above} $H$ and $H$ to be \defn{below} $J$. Reversing the transverse orientation on $\mc{H}$ interchanges the notions of above and below. 
\end{definition}

The set of oriented multiple v.p.-bridge surfaces for $(M,T)$ is denoted $\vpoH(M,T)$. Note that there is a forgetful map from $\vpoH(M,T)$ to $\vpH(M,T)$. Any of our results for $\vpH(M,T)$ can be turned into results for $\vpoH(M,T)$, though the converse isn't true.

Given thick surfaces $H$ and $J$, it is not necessarily the case that $H$ is above $J$ or vice versa, even if they are in the same component of $M$. See Figure \ref{Fig:  orientedbridgesurface} for a depiction of an oriented multiple v.p.-bridge surface. Not all multiple v.p.-bridge surfaces can be oriented. For example, circular thin position (defined in \cite{Fabiola}) although we can define ``above'' and ``below'', the set of thick surfaces below a given thick surface $H$ will equal the set of thick surfaces above $H$. Notice, however, that every connected multiple v.p.-bridge surface, once it is given a transverse orientation is an oriented multiple v.p.-bridge surface since it separates $M$.
\begin{center}
\begin{figure}[tbh]
\includegraphics[scale=0.4]{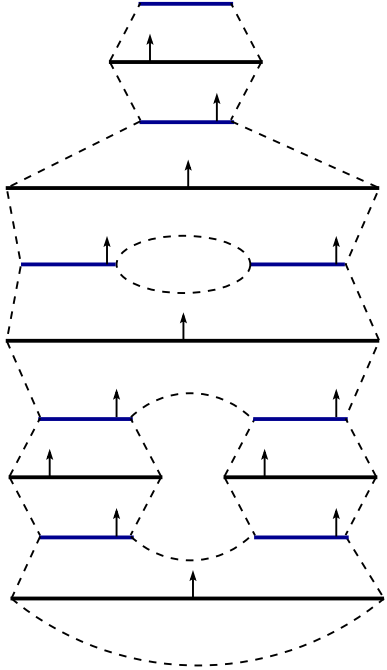}
\caption{An example of an oriented multiple v.p.-bridge surface. Blue horizontal lines represent thin surfaces or boundary components. Black horizontal lines represent thick surfaces.}
\label{Fig:  orientedbridgesurface}
\end{figure}
\end{center}

Finally, for this section, we observe that cutting open along a thin surface induces oriented multiple bridge surfaces of the components. 
\begin{lemma}\label{cutting}
Suppose that $\mc{H} \in \vpoH(M,T)$ and that $F \subset \mc{H}^-$ is a component. Let $(M', T')$ be a component of $(M,T) \setminus F$ and let $\mc{K} = (\mc{H} \setminus F) \cap M'$. Then $\mc{K} \in \vpoH(M',T')$. 
\end{lemma}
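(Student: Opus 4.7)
The plan is to show $\mc{K}$ inherits its multiple v.p.-bridge surface structure (and its orientation data) directly from $\mc{H}$. I would set $\mc{K}^+ := \mc{H}^+ \cap M'$ and $\mc{K}^- := (\mc{H}^- \setminus F) \cap M'$, and take the v.p.-compressionbodies of $(M',T') \setminus \mc{K}$ to be precisely those components $(C_i, T_i)$ of $(M,T) \setminus \mc{H}$ that lie in $M'$. Because $M'$ is a component of $M \setminus \eta(F)$, the components of $M' \setminus \eta(\mc{K})$ are in natural bijection with the components of $M \setminus \eta(\mc{H})$ contained in $M'$, so this assignment is well defined and each piece remains a v.p.-compressionbody with its original $\boundary_{\pm}$-decomposition inherited from $\mc{H}$.

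Next, I would verify the three bullet points of Definition \ref{Def: multiple bridge surfaces}. The first bullet is built into the construction. For the identity $\mc{K}^- \cup \boundary M' = \bigcup_i \boundary_- C_i$, observe that cutting $M$ along $F$ transfers any $F$-component of some $\boundary_- C_i$ from $\mc{H}^-$-status to $\boundary M'$-status, while every other $\boundary_-$-piece of a $C_i \subset M'$ stays in $\mc{K}^-$ or already lay in $\boundary M \cap M' \subset \boundary M'$; the identity $\mc{K}^+ = \bigcup_i \boundary_+ C_i$ is immediate. For the third bullet, any component $S \subset \mc{K}$ is adjacent in $\mc{H}$ to two distinct compressionbodies $C_a, C_b$, and since neither of these can cross $F$ (they can only abut $F$ along parts of their $\boundary_-$), both lie entirely in $M'$ and remain distinct pieces of $(M',T') \setminus \mc{K}$.

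Finally, equip each component of $\mc{K}$ with its $\mc{H}$-transverse orientation. The two orientation compatibility conditions for each $(C_i, T_i)$ hold verbatim in $(M', T')$: removing any $F$-components from $\boundary_- C_i \cap \mc{H}^-$ (they now lie in $\boundary M'$ rather than $\mc{K}^-$) only shrinks the set on which the condition is imposed, and the relation to the orientation of $\boundary_+ C_i$ is unchanged. For the absence of closed flow lines, any closed oriented path in $M'$ transverse to $\mc{K}$ and always crossing in the direction of the transverse orientation is simultaneously a closed flow line in $M$ for $\mc{H}$, since $F \subset \boundary M'$ cannot be met by a loop inside $M'$; hence $\mc{H} \in \vpoH(M,T)$ forbids any such loop in $M'$. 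The whole argument is structural bookkeeping; the only point requiring any care is tracking which $\boundary_-$-pieces migrate from $\mc{H}^-$-status to $\boundary M'$-status under the cut and confirming that this migration never breaks the orientation condition on the $F$-side compressionbodies.
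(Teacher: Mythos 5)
Your proof is correct and takes essentially the same approach as the paper, which simply observes that the claim follows immediately from the definitions since the orientation on $\mc{H}$ restricts and the flow lines for $\mc{K}$ form a subset of those for $\mc{H}$. Your write-up is a careful unwinding of that one-sentence argument, correctly tracking how $F$ migrates from $\mc{H}^-$-status to $\boundary M'$-status and why this preserves both bullet points of the orientation condition.
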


The proof of Lemma \ref{cutting} follows immediately from the definitions, as the orientation on $\mc{H}$ restricts to an orientation on $\mc{K}$ and the flow lines for $\mc{K}$ form a subset of the flow lines for $\mc{H}$.

\section{Simplifying Bridge Surfaces}\label{Reducing Complexity}

This section presents a host of ways of replacing certain types of multiple v.p.-bridge surfaces by new ones that are closely related but are ``simpler" (we will make this concept precise is section \ref{sec:complexity}). These simplifications are similar to the notion of ``destabilization'' and ``weak reduction'' for Heegaard splittings.  Versions of many of these have appeared in other papers (e.g., \cites{HS01, STo, TT1, TT2, RS}.) The operations are: (generalized) destabilization, unperturbing, undoing a removable arc, untelescoping, and consolidation.

\subsection{Destabilizing}

Given a Heegaard splitting one can always obtain a Heegaard splitting of higher genus by adding a cancelling pair of a one-handle and a two-handle, or (if the manifold has boundary) by tubing the Heegaard surface to the frontier of a collar neighborhood of a component of the boundary of the manifold. In the case where the manifold contains a graph, the core of the 1-handle, the co-core of the 2-handle, or the core of the tube might be part of the graph. (Though in this paper, we do not need to consider the case when \emph{both} the 1-handle and the 2-handle contain portions of the graph.) In the realm of Heegaard splittings, the higher genus Heegaard splitting is said to be either a stabilization or a boundary-stabilization of the lower genus one. Observe that drilling out edges of $T$ disjoint from $\mc{H}\in \vpH(M,T)$ preserves the fact that $\mc{H}$ is a multiple v.p.-bridge surface. This suggests we also need to consider boundary-stabilization along portions of the graph $T$. Without further ado, here are our versions of destabilization:

\begin{definition}\label{Def: gen stab}
Suppose that $\mc{H} \in \vpH(M,T)$ and let $H$ be a component of $ \mc{H}^+$. There are six situations in which we can replace $H$ by a new thick surface $H'$ that is obtained from $H$ by compressing along an sc-disc $D$. If $H$ satisfies any of these conditions we say that $H$ and $\mc{H}$ contain a \defn{generalized stabilization}. See Figure \ref{Fig: Stabs} for examples.
\begin{itemize}

\item There is a pair of compressing discs for $H$ which intersect transversally in a single point and are contained on opposite sides of $H$ and in the complement of all other surfaces of $\mc{H}$. In this case we say that $H$ and $\mc{H}$ are \defn{stabilized}. The pair of compressing discs is called a \defn{stabilizing pair}. The surface $H'$ is obtained from $H$ by compressing along either of the discs.

\item There is a pair of a compressing disc and a cut disc for $H$ which intersect transversally in a single point and are contained on opposite sides of $H$ and in the complement of all other surfaces of $\mc{H}$. In this case we say that $H$ and $\mc{H}$ are \defn{meridionally stabilized}. The pair of compressing disc and cut disc is called a \defn{meridional stabilizing pair}. The surface $H'$ is obtained by compressing $H$ along the cut disc.

\item There is a separating compressing disc $D$ for $H$ contained in the complement of all other surfaces of $\mc{H}$ such that the following hold. Let $W$ be the component of $M\setminus \mc{H}^-$ containing $H$. Compressing $H$ along $D$ produces two connected surfaces, $H'$ and $H''$, where $H'$ is a v.p.-bridge surface for $W$ and $H''$ bounds a trivial product compressionbody disjoint from $H'$ with a component $S$ of $\boundary M$. In this case we say that $H$ and $\mc{H}$ are \defn{boundary-stabilized} along $S$.

\item There is a separating cut disc $D$ for $H$ contained in the complement of all other surfaces of $\mc{H}$ such that the following hold. Let $W$ be the component of $M\setminus \mc{H}^-$ containing $H$. Compressing $H$ along $D$ produces two connected surfaces, $H'$ and $H''$, where $H'$ is a v.p.-bridge surface for $W$ and $H''$ bounds a trivial product compressionbody disjoint from $H'$ with a component $S$ of $\boundary M$. In this case we say that $H$ and $\mc{H}$ are \defn{meridionally boundary-stabilized} along $S$.

\item Let $G$ be a non-empty collection of vertices and edges of $T$ disjoint from $\mc{H}$. Let $\widetilde M = M \setminus G$. If $H$ and $\mc{H}$ as a multiple v.p.-bridge surface of $\widetilde M$ are (meridionally) boundary stabilized along a component of $\bdd\widetilde{M}$ which is not a component of $\bdd M$, then $H$ and $\mc{H}$ are \defn{(meridionally) ghost boundary-stabilized} along $G$.
\end{itemize}
\end{definition}

\begin{center}
\begin{figure}[tbh]
\labellist
\small\hair 2pt
\pinlabel{$H'$} [l] at 239 296
\pinlabel{$H''$} [l] at 213 243
\pinlabel{$H'$} [l] at 649 296
\pinlabel{$H''$} [l] at 622 243
\pinlabel{$H'$} [l] at 518 90
\pinlabel{$H''$} [l] at 518 55
\pinlabel{$G$} [l] at 301 26
\endlabellist
\includegraphics[scale=0.4]{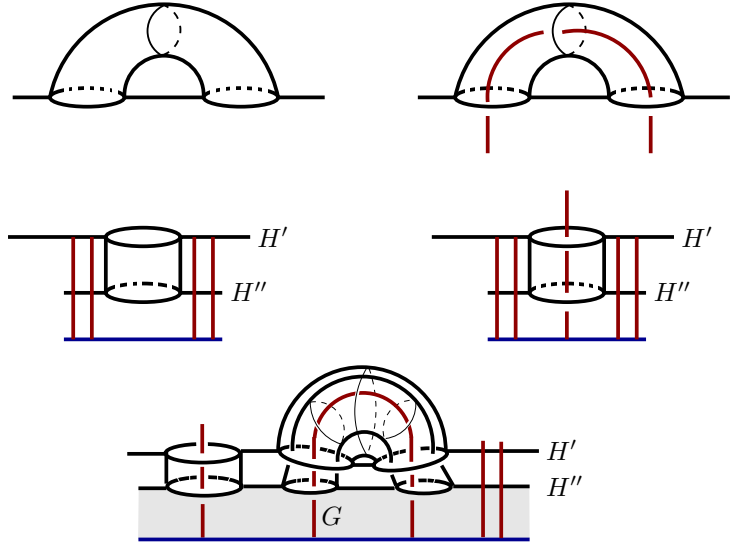}
%\scalebox{0.4}{\input{bridgeboundarydisc1.eps_t}}
\caption{Depictions of stabilization, meridional stabilization, $\boundary$-stabilization, meridional $\boundary$-stabilization, and meridional ghost $\boundary$-stabilization. The disc $D$ in the final picture, corresponds to the core of the left-most tube. In the last three cases, portions of the surfaces $H'$ and $H''$, which appear after compressing along the disc $D$, have been labelled. In the final case, we have shaded the product region between $H''$ and $S$.}
 \label{Fig:  Stabs}
\end{figure}
\end{center}

\begin{remark}
In the definitions of (meridional) (ghost) $\boundary$-stabilization, it's important to note that the statement that $H'$ is a v.p.-bridge surface for the component of $M \setminus \mc{H}^-$ containing it is a precondition of being able to destabilize. Not every sc-compression of a thick surface resulting in a $\boundary$-parallel surface is a destabilization. Performing a (meridional) (ghost) $\boundary$-destabilization moves one or more components of $\boundary M$ from one side of $H$ to the other side of $H'$. This is the reason we don't place transverse orientations on the components of $\boundary M$. 
\end{remark}

\begin{remark}\label{Destab Rem}
Suppose that $H \subset \mc{H}^+$ has a generalized stabilization and let $H'$ be the surface obtained from $H$ by sc-compressing as in the definition above.  It is easy to check (as in the classical settings) that $\mc{K} = (\mc{H} \setminus H) \cup H'$ is a multiple v.p.-bridge surface for $(M,T)$. If $\mc{H} \in \vpoH(M,T)$, the transverse orientation on $\mc{H}$ induces a transverse orientation on $\mc{K}$. Clearly, no new  non-constant closed flow lines are created. In particular, if $\mc{H} \in \vpoH(M,T)$, there is a natural way of thinking of $\mc{K}$ as an element of $\vpoH(M,T)$. We say that the (oriented) multiple v.p.-bridge surface $\mc{K}$ is obtained by \defn{destabilizing} $\mc{H}$ (and that the thick surface $H'$ is obtained by \defn{destabilizing} the thick surface $H$.)  
\end{remark}

\subsection{Perturbed and Removable Bridge Surfaces}
We can sometimes push a bridge surface across a bridge disc and obtain another bridge surface. This operation is called unperturbing. 

\begin{definition}
Let $\mc{H} \in \vpH(M,T)$ and let $H \subset \mc{H}^+$ be a component. Suppose that there are bridge discs $D_1$ and $D_2$ for $H$ in $M \setminus \mc{H}^-$, on opposite sides, disjoint from the vertices of $T$, and which have the property that the arcs $\alpha_1 = \boundary D_1 \cap H$ and $\alpha_2 = \boundary D_2 \cap H$ share exactly one endpoint and have disjoint interiors. Then $H$ and $\mc{H}$ have a \defn{perturbation}. The discs $D_1$ and $D_2$ are called a \defn{perturbing pair} of discs for $H$ and $\mc{H}$.
\end{definition}

\begin{remark}
The type of perturbation we have defined here might better be called an ``arc-arc''-perturbation. There are also perturbations where the bridge discs are allowed to contain vertices of $T$, but we will not need them in this paper.
\end{remark}

\begin{center}
\begin{figure}[tbh]
\includegraphics[scale=0.5]{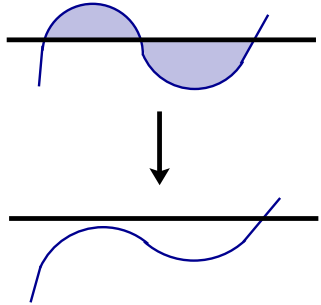}
%\scalebox{0.4}{\input{bridgeboundarydisc1.eps_t}}
\caption{Unperturbing $H$.}
 \label{Fig: Perturbed}
\end{figure}
\end{center}

\begin{lemma}\label{Lem: Unperturb}
Let $\mc{H}$ be an (oriented) multiple v.p.-bridge surface for $(M,T)$. Suppose that $H\subset \mc{H}^+$ is a perturbed component with perturbing discs $D_1$ and $D_2$. Let $E$ be the frontier of the neighborhood of $D_1$. Then compressing $H$ along $E$ and discarding the resulting twice punctured sphere component results in a new surface $H'$ so that $\mc{K}=(\mc{H}-H)\cup H'$ is an (oriented) multiple v.p.-bridge surface for $(M,T)$.
\end{lemma}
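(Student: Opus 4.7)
The plan is to verify that $\mc{K} = (\mc{H} \setminus H) \cup H'$ satisfies the three axioms of a multiple v.p.-bridge surface for $(M,T)$, and in the oriented case that the orientation extends without creating closed flow lines. Since $\mc{K}$ differs from $\mc{H}$ only in the replacement of $H$ by $H'$, only the v.p.-compressionbody structure on the two regions adjacent to $H'$ requires checking. To set up notation, let $(H_\up, T_\up)$ and $(H_\down, T_\down)$ be the v.p.-compressionbodies adjacent to $H$, with $D_1 \subset H_\up$ and $D_2 \subset H_\down$. Write $N = \overline{\eta(D_1)}$ and $A = N \cap H$ (a twice-punctured disc in $H$ containing $\alpha_1$ as its core), and note that $E = \overline{\partial N \setminus A}$ is a disc in $H_\up$ disjoint from $T$ with $\boundary E = \boundary A = \gamma$. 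Then $H' = (H \setminus \mathring{A}) \cup E$, and the two new regions are $(H_\up', T_\up') = (\overline{H_\up \setminus N}, T_\up \setminus \beta_1)$ and $(H_\down', T_\down') = (H_\down \cup N, T_\down \cup \beta_1)$.

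For the above side, I will apply Lemma \ref{Lem: Invariance}(3). Since $E$ is disjoint from $T$ and $\gamma$ bounds the twice-punctured disc $A$ on $H$, the disc $E$ is an sc-disc for $\boundary_+ H_\up$: a compressing disc if $\gamma$ is essential on $H \setminus T$, and a semi-compressing disc otherwise. Boundary-reducing $(H_\up, T_\up)$ along $E$ therefore produces a disjoint union of v.p.-compressionbodies; one is $(N, \beta_1)$, a trivial ball compressionbody witnessed by $D_1$, and the other is $(H_\up', T_\up')$. Hence $(H_\up', T_\up')$ is a v.p.-compressionbody with $\boundary_+ H_\up' = H'$.

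For the below side, the strategy is to start with a complete collection $\Delta$ of sc-discs for $(H_\down, T_\down)$ and to show, after isotoping $\Delta$ to be disjoint from both $D_2$ and $A$, that $\Delta$ serves as a complete collection for $(H_\down', T_\down')$. The disc $A$ lies in the positive boundary of some single piece $(W, T_W)$ of the reduction, which must be a trivial ball compressionbody containing $\beta_2$ as a $\boundary$-parallel arc via $D_2$. Adjoining $N$ along $A$ yields $(W \cup N, T_W \cup \beta_1)$, and $W \cup N$ is still a ball since a ball attached to a ball along a boundary disc is a ball. Because $p$ is a regular (not valence-two) point of $T$, the arcs $\beta_1$ and $\beta_2$ join at $p$ into a single sub-arc of $T$; together with any piece of $T_W$ meeting $q_1$ (which is now interior to $W \cup N$), they form a connected graph in $W \cup N$. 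To verify that this graph is still a $\boundary$-parallel tree, one constructs a bridge disc for the combined arc as a band-sum of $D_1$ and $D_2$ across a short sub-arc of $H \setminus T$ near $p$ joining $\alpha_1$ and $\alpha_2$. Finally, the transverse orientation on $H$ transports to $H'$ through the obvious isotopy supported in a neighborhood of $N$, and since $\mc{K}$ agrees with $\mc{H}$ outside this neighborhood, no new closed flow lines can appear.

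The main obstacle will be the below-side argument: verifying that adjoining $\beta_1$ to $T_W$ produces a $\boundary$-parallel tree in $W \cup N$. The key input is the band-sum of the bridge discs $D_1$ and $D_2$, which is possible precisely because they share only the regular point $p$ and because $\alpha_1, \alpha_2$ have disjoint interiors. A short case analysis is needed depending on the type of arc of $T_W$ meeting $q_1$ (whether it is another bridge arc, a vertical arc, or an edge of the tree $T_W$ incident to a single interior vertex), but in each case the hypothesis that $T$ has no valence-two vertex keeps the combined graph a proper tree with at most one interior vertex, so that the resulting compressionbody remains trivial.
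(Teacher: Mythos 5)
Your treatment of the side containing $D_1$ is fine and matches the paper's (quick) argument: removing the bridge arc $\beta_1$ by cutting along the frontier of $\eta(D_1)$ leaves a v.p.-compressionbody. The problem is on the other side.

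The step ``after isotoping $\Delta$ to be disjoint from both $D_2$ and $A$'' is where the proof breaks. A complete collection for $(H_\down,T_\down)$ cannot, in general, be made disjoint from the twice-punctured disc $A$. The disc $A$ contains the two punctures $p$ and $q_1=\partial\beta_1\setminus p$, and the components of $T_\down$ through these points --- $\beta_2$ and the arc $\tau$ through $q_1$ --- are in general \emph{distinct} components of $T_\down$. If $\Delta$ misses $A$, then $p$ and $q_1$ lie in the positive boundary of a single piece $(W,T_W)$ of the reduction, so $T_W$ contains both $\beta_2$ and $\tau$. Since $\beta_2$ is a bridge arc, $W$ would have to be a trivial \emph{ball} compressionbody, but the definition requires its tangle to be a single connected $\partial$-parallel tree, and $\beta_2\cup\tau$ is disconnected. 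Hence any complete collection must separate $\beta_2$ from $\tau$, so $\partial\Delta$ separates $p$ from $q_1$ on $H$ and must cross $\alpha_1\subset A$. A concrete instance: $(H_\down,T_\down)=(B^3,\ \text{two bridge arcs})$ with $\beta_2$ one arc and $\tau$ the other; every complete collection contains a disc separating them, and its boundary meets $\alpha_1$. Everything downstream of this step (the single trivial ball piece $W$ with $A\subset\partial_+W$, the attachment of $N$ along $A$, the band-sum of $D_1$ and $D_2$ inside $W\cup N$) depends on this false configuration.

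The paper's proof gets around exactly this point, and not by an isotopy: it places the frontier $D$ of $\eta(D_2)$ \emph{into} the complete collection $\Delta$ --- note $\partial D$ necessarily crosses $\alpha_1$ once, near $p$ --- and then shows by a minimality argument, replacing a disc $E$ of $\Delta\setminus D$ by the band-sum of $E$ with a parallel copy of $D$ along a subarc of $\partial D_1$ (a genuine modification of the collection, not an isotopy), that $\Delta\setminus D$ can be made disjoint from $\partial D_1$. Only then does one cut along $\Delta\setminus D$ and read off, in the resulting local picture, that pushing $H$ across $D_1$ merges $\beta_2$, $\beta_1$, and $\tau$ into a single bridge or vertical arc. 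If you want to salvage your write-up, replace ``disjoint from $D_2$ and $A$'' by ``containing $D=\partial\overline{\eta(D_2)}\setminus H$, with $\Delta\setminus D$ disjoint from $\partial D_1$,'' and supply the tubing argument; your band-sum of $D_1$ and $D_2$ is then a reasonable way to finish inside the piece containing $D$.
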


\begin{proof}
This is nearly identical to Lemma 3.1 of \cite{STo}. We can alternatively think of $H'$ as obtained from $H$ by an isotopy along $D_1$. On the side of $H$ containing $D_1$, this isotopy removed a bridge arc and so $H$ is still the positive boundary of a v.p.-compressionbody to that side. Let $(C, T_C)$ be the v.p.-compressionbody containing $D_2$. Let $D$ be the frontier of a regular neighborhood of $D_2$ in $C$, so that $D$ cuts off a $(B^3, \text{ arc})$ containing $D_2$ from $(C,T_C)$. Note that $D$ is an sc-disc for $(C,T_C)$. Let $\Delta$ be a complete set of sc-discs for $(C,T_C)$ containing $D$ and chosen so as to minimize $|\boundary \Delta \cap \boundary D_1|$. Observe that no component of $\Delta\setminus D$ is inside the $(B^3, \text{ arc})$ cut off by $D$. Suppose $E \subset \Delta\setminus D$ is a disc with boundary intersecting $\boundary D_1$, and which contains the intersection point of $\boundary \Delta \cap \boundary D_1$ closest to the point $\boundary D \cap \boundary D_1$. Let $E'$ be the disc obtained by tubing $E$ to a parallel copy of $D$, along a subarc of $\boundary D_1$. It is not difficult to confirm that $(\Delta \setminus E) \cup E'$ is still a complete collection of sc-discs for $(C, T_C)$. However, it intersects $\boundary D_1$ fewer times than $\Delta$, a contradiction. Thus, $\boundary D_1$ is disjoint from $\Delta \setminus D$. 

Boundary-reduce $(C, T_C)$ using $\Delta \setminus D$. We arrive at the union of v.p.-compressionbodies, one of which contains $\boundary D_1$. We can now see that the isotopy of $H$ across $D_1$, either combines two bridge arcs into another bridge arc or combines a vertical arc and a bridge arc into a bridge arc. Thus, the result of unperturbing is still a multiple v.p.-bridge surface.

If $\mc{H}$ is oriented we make $\mc{K}$ oriented by using the transverse orientations induced from $\mc{H}$. Clearly, no new closed flow lines are created.
\end{proof}

We say that the (oriented)  multiple  v.p.-bridge surface $\mc{K}$ constructed in the proof is obtained by \defn{unperturbing} $\mc{H}$. See Figure \ref{Fig: Perturbed} for a schematic depiction of the unperturbing operation.

\subsection{Removable Pairs}

Suppose that $\mc{H}$ is an (oriented) multiple v.p.-bridge surface for $(M,T)$ such that no component of $\mc{H}^- \cup \boundary M$ is a sphere intersecting $T$ exactly once. Let $H$ be a component of $\mc{H}^+$, with $\mc{D}_\up$ and $\mc{D}_\down$ complete sets of discs for $(H_\up, T_\up)$ and $(H_\down, T_\down)$ respectively. Suppose that there exists a bridge disc $D$ for $H$ in $H_\up$ (or $H_\down$) with the following properties:
\begin{itemize}
\item it is disjoint from the vertices of $T$;
\item it is disjoint from $\mc{D}_\up$ (resp. $\mc{D}_\down$);
\item  the arc $\boundary D \cap H$  intersects a single component $D^*$ of $\mc{D}_\down$ (resp. $ \mc{D}_\up$). $D^*$ is a disc and $|D \cap D^*|=1$,
\end{itemize}
then $\mc{H}$ and $H$ are \defn{removable}. The discs $D$ and $D^*$ are called a \defn{removing pair}. See the left side of Figure \ref{Fig: Removable}.

\begin{example}\label{Ex: 2-handle}
Suppose that $H \in \vpH(M,T)$ is connected and that $M'$ is obtained from $M$ by attaching a 2-handle to $\boundary M$ or Dehn-filling a torus component of $\boundary M$. Let $\alpha$ be either a co-core of the 2-handle or a core of the filling torus. Using an unknotted path in $M - H$, isotope $\alpha$ so that it intersects $H$ exactly twice. Then $H \in \vpH(M,T \cup \alpha)$ is removable. The component $\alpha$ is called the \defn{removable component of $T \cup \alpha$}.
\end{example}

\begin{lemma}\label{Removing Decr Compl.}
Suppose that $\mc{H} \in \vpH(M,T)$ is removable. Then there is an isotopy of $\mc{H}$ in $M$ to $\mc{K} \in \vpH(M,T)$  supported in the neighborhood of the removing pair so that $\mc{K}$ intersects $T$ two fewer times than $\mc{H}$ does. Furthermore, if $\mc{H}$ is oriented, so is $\mc{K}$.
\end{lemma}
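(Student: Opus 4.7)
The plan is to realize the promised isotopy as the natural push of $H$ across a regular neighborhood of the bridge disc $D$, and to verify the v.p.-compressionbody structure on each side of the new surface $H'$ separately.

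First I would set up the isotopy. Take $N = \overline{\eta(D)}$ to be a closed regular neighborhood of $D$ chosen small enough that $E := N \cap H$ is a disc containing $\beta = \boundary D \cap H$ in its interior, $N \cap T$ is a slight extension of the bridge arc $\alpha = \boundary D \cap T$, $N$ is disjoint from $\mc{D}_\up$ and from $\mc{D}_\down \setminus \{D^*\}$, and $N \cap D^*$ is a bicollar of the single point $q = \boundary D^* \cap \beta$. Write $\boundary N = E \cup D''$ with $D'' \subset \overline{H_\up}$ disjoint from $\alpha$. Setting $H' = (H \setminus \interior E) \cup D''$ and $\mc{K} = (\mc{H} \setminus H) \cup H'$, the isotopy pushes $E$ across $N$ to $D''$; it is supported in $N$, and the two points $\{p_1, p_2\} = H \cap T \cap E$ (endpoints of $\alpha$) are not replaced by new intersections on $D''$, so $|\mc{K} \cap T| = |\mc{H} \cap T| - 2$.

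Next I would verify the v.p.-compressionbody structure on both sides. On the upper side, $(H'_\up, T'_\up) = (H_\up \setminus N, T_\up \setminus \alpha)$, and since $\mc{D}_\up$ is disjoint from $N$ it remains a collection of sc-discs for $H'_\up$; the existence of the bridge disc $D$ in the trivial compressionbody containing $\alpha$ after boundary-reducing $(H_\up, T_\up)$ along $\mc{D}_\up$ forces that trivial compressionbody to be a trivial ball compressionbody with tree exactly $\alpha$, so removing $\alpha$ yields a trivial ball compressionbody with empty tree and $\mc{D}_\up$ is complete for $(H'_\up, T'_\up)$. For the lower side $(H'_\down, T'_\down) = (H_\down \cup N, T_\down \cup (N \cap T))$, I would exhibit a modified complete collection. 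Let $\gamma = \boundary D^* \cap E$ and pick an arc $\gamma'' \subset D''$ joining the endpoints of $\gamma$. Because $\gamma \cup \gamma''$ separates $p_1$ from $p_2$ on the sphere $\boundary N$, any disc $R \subset N$ bounded by $\gamma \cup \gamma''$ crosses $\alpha$ an odd number of times; I would choose $R$ meeting $\alpha$ exactly once and set $\tilde D = (D^* \setminus \interior \gamma) \cup_\gamma R$. I would then argue that $(\mc{D}_\down \setminus \{D^*\}) \cup \{\tilde D\}$ is a complete collection of sc-discs for $(H'_\down, T'_\down)$, with the only change being that the trivial compressionbody previously containing $q$ absorbs $\alpha$, which enters as a bridge arc, vertical arc, ghost arc, or tree edge depending on how the adjacent edges of $T_\down$ are configured.

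The main obstacle is this last claim, and especially the subcase where $D^*$ is a cut or semi-cut disc: then $\tilde D$ acquires a second $T$-intersection (the original puncture of $D^*$ plus the new one at $R \cap \alpha$), and one must either further modify $\tilde D$ or split the final boundary reduction into two steps, exploiting the newly formed arc of $T'_\down$ (joining $\alpha$ with the two edges of $T_\down$ meeting $H$ at $p_1$ and $p_2$) to slide the intersections together and cancel them. The orientation claim is then immediate: the transverse orientation on $H$ transports to $H'$ along the isotopy, all other transverse orientations are unchanged, and because the isotopy is supported in a $3$-ball no new closed flow lines can arise, so $\mc{H}$ oriented implies $\mc{K}$ oriented.
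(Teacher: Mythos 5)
Your argument is essentially the paper's: isotope $H$ across the bridge disc $D$ (equivalently, push the bridge arc into $H_\down$), observe that $\mc{D}_\up$ still decomposes the upper side into trivial compressionbodies, and replace $D^*$ on the lower side by the induced cut disc $\tilde D$, which cut-compresses $(H'_\down,T'_\down)$ into the same trivial pieces that $D^*$ produced before. The only correction needed is to your ``main obstacle'': the definition of a removing pair stipulates that $D^*$ \emph{is a disc}, meaning $D^*\cap T=\nil$ (a compressing or semi-compressing disc, not a cut or semi-cut disc), precisely so that $\tilde D$ meets $T$ exactly once and is still an sc-disc; the unresolved subcase you flag is excluded by hypothesis, and no two-step boundary reduction is required.
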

\begin{proof}
Let $H$ be the thick surface which is removable. We will construct an isotopy from $H$ to a surface $H'$ supported in a regular neighborhood of the removing pair and let $\mc{K} = (\mc{H} - H) \cup H'$. We will show that $\mc{K}$ is a multiple v.p.-bridge surface. Assuming it is, if $\mc{H} \in \vpoH(M,T)$, we give $H'$ the normal orientation induced by $H$. It is then easy to show that $\mc{K} \in \vpoH(M,T)$.  

Without the loss of much generality, we may assume that $\mc{H} = H$ is connected.

Let $D \subset H_\up$ and $D^*\subset H_\down$ be the removing pair and let $\mc{D}_\up$ and $\mc{D}_\down$ be the corresponding complete set of discs from the definition of ``removable''.  Isotope $T$ across $D$ so that $T \cap D$ lies in $H_\down$. Let $T'$ be the resulting graph and let $D^*_c$ be the cut disc that $D^*$ gets converted into. Equivalently, we may isotope $H$ across $D$ and let $H'$ be the resulting surface. See Figure \ref{Fig: Removable}.

The graph $T'_\up = T' \cap H_\up$  is obtained from $T_\up = T \cap H_\up$ by removing a component of $T_\up$. After creating $T'$ from $T$, the collection $\mc{D}_\up$ remains a set of discs that decompose $(H_\up, T'_\up)$ into trivial compressionbodies, although there may now be discs in $\mc{D}_\up$ which are parallel or which are boundary-parallel in $H_\up \setminus T'_\up$. Thus, $(H_\up, T'_\up)$ is a v.p.-compressionbody.

To show that $(H_\down, T'_\down)$ is a v.p.-compressionbody, note that cut-compressing $(H_\down, T')$  along $D^*_c$ results in the same collection of compressionbodies as compressing $(H_\down, T)$ along $D^*$. Therefore $\mc{D}_\down$ with $D^*$ replaced by the induced cut disc 
$D^*_c$ is a complete collection of sc-discs for $(H_\down, T'_\down)$ and so $(H_\down, T'_\down)$ is a v.p.-compressionbody. We conclude that $\mc{K}$ is an (oriented) multiple v.p.-bridge surface. 
\end{proof}

The surface $\mc{K}$ in the preceding lemma is said to be obtained by \defn{undoing a removable arc} of $\mc{H}$.

\begin{center}
\begin{figure}[tbh]
\includegraphics[scale=0.4]{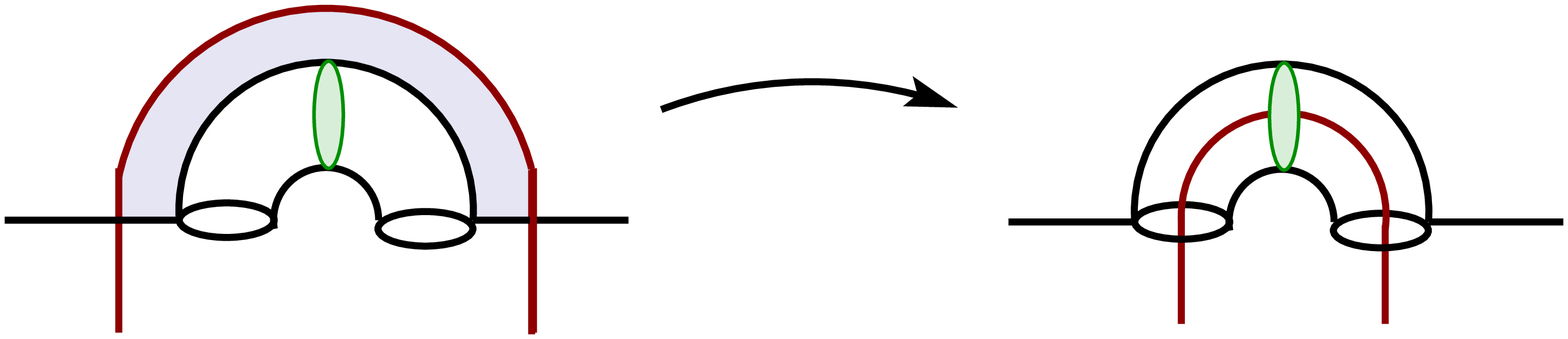}
\caption{Undoing a removable arc}
 \label{Fig: Removable}
\end{figure}
\end{center}

\section{Untelescoping and Consolidation}\label{sec: elementary thinning}

If we let $T$ be empty in everything discussed so far and if we ignore the transverse orientations, then we are in Scharlemann-Thompson's set-up for thin position. We need a way to recognize when the multiple bridge surface can be ``thinned" and a way to show that this thinning process eventually terminates. Scharlemann and Thompson thin by switching the order in which some pair (1-handle, 2-handle) are added and they use Casson-Gordon's criterion \cite{CG} to recognize that this is possible by finding disjoint compressing discs on opposite sides of a thick surface. In this section, we use compressions along sc-weak reducing pairs of discs in place of handle exchanges. 

\subsection{Untelescoping}
Suppose that $\mc{H} \in \vpH(M,T)$. If $\mc{H}$ has the property that there is a component $H \subset \mc{H}^+$, and disjoint sc-discs $D_-$ and $D_+$ for $H$ on opposite sides so that $D_-$ and $D_+$ are disjoint from $\mc{H}^-$, we say that $\mc{H}$ is \defn{sc-weakly reducible}, that $H$ is the \defn{sc-weakly reducible component} and that $\{D_-, D_+\}$ is a \defn{sc-weakly reducing pair}.  If $\mc{H}$ is not sc-weakly reducible, we say it is \defn{sc-strongly irreducible}.  If $D_-$ and $D_+$ are c-discs, we also say that $\mc{H}$ is \defn{c-weakly reducible}, etc. Suppose that no component of $\mc{H}^- \cup \boundary M$ is a sphere intersecting $T$ exactly once. Then, given an sc-weakly reducible $\mc{H} \in \vpH(M,T)$, we can create a new $\mc{K} \in \vpH(M,T)$ by \defn{untelescoping} $\mc{H}$ as follows:

\begin{definition}\label{Def: untel for unorient}
Let $\{D_-, D_+\}$ be an sc-weakly reducing pair for an sc-weakly reducible component $H$ of $\mc{H}^+$. Let $N$ be the component of $M \setminus \mc{H}^-$ containing $H$. Let $F$ be the result of compressing $H$ using both $D_-$ and $D_+$. Let $H_\pm$ be the result of compressing $H$ using only $D_\pm$ and isotope each of $H_\pm$ slightly into the compressionbody containing $D_\pm$, respectively.  Let $\mc{K}^- = \mc{H}^- \cup F$ and $\mc{K}^+ = (\mc{H}^+ \setminus H) \cup (H_- \cup H_+)$.  See Figure \ref{Fig:  untelescope} for a schematic picture. The component of $F$ adjacent to copies of both $D_-$ and $D_+$ is called the \defn{doubly spotted} component. (The terminology is taken from \cite{Scharlemann-Survey}.)
\end{definition}

\begin{center}
\begin{figure}[tbh]
\includegraphics[scale=0.4]{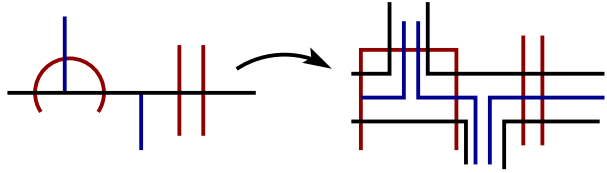}
%%\scalebox{0.4}{\input{bridgeboundarydisc1.eps_t}}
\caption{Untelescoping $H$. The red curves are portions of $T$. The blue lines on the left are sc-discs for $H$. Note that if a semi-cut or cut disc is used then a ghost arc is created.}
 \label{Fig:  untelescope}
\end{figure}
\end{center}

\begin{lemma}\label{lem:untelescope}
If $\mc{H} \in \vpH(M,T)$ and if $\mc{K}$ is obtained by untelescoping $\mc{H}$, then $\mc{K} \in \vpH(M,T)$.
\end{lemma}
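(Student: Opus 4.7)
My plan is to verify the three defining conditions of Definition \ref{Def: multiple bridge surfaces} for $\mc{K}$: disjointness of $\mc{K}^-$ and $\mc{K}^+$ (immediate from the construction); that $(M,T)\setminus \mc{K}$ is a union of v.p.-compressionbodies with the correct $\boundary_\pm$ assignments; and that each component of $\mc{K}$ is adjacent to two distinct compressionbodies. Since $\mc{H}$ and $\mc{K}$ agree outside the component $N$ of $M \setminus \mc{H}^-$ containing $H$, the remaining conditions need only be verified locally in $N$.

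Let $(C_-, T_-) \cup_H (C_+, T_+)$ be the decomposition of $N$ given by $\mc{H}$, with $D_\pm \subset C_\pm$ the sc-weakly reducing pair. After arranging pushes so that $H_\pm$ sits slightly inside $C_\pm$ and $F$ sits near $H$ with small excursions enclosing neighborhoods of $D_\pm$, the surfaces $H_-, F, H_+$ are pairwise disjoint and cut $N$ into four regions: $V_-$ bounded by $\boundary_- C_- \cup H_-$; $R_-$ bounded by $H_-$ together with the adjacent components of $F$; $R_+$ bounded by $H_+$ together with its adjacent components of $F$; and $V_+$ bounded by $H_+ \cup \boundary_- C_+$. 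By construction $V_\pm$ is the result of $\boundary$-reducing $(C_\pm, T_\pm)$ along $D_\pm$, so Lemma \ref{Lem: Invariance}(3) identifies $V_\pm$ as a disjoint union of v.p.-compressionbodies with $\boundary_+ = H_\pm$ and $\boundary_- = \boundary_- C_\pm$.

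The key step is identifying each $R_\pm$ as a v.p.-compressionbody. For $R_-$, I would exhibit an sc-disc $D_+' \subset R_-$ with $\boundary D_+' \subset H_-$ obtained as a parallel copy of $D_+$ whose boundary has been slid from $\boundary D_+ \subset H$ onto $H_-$ via a small annulus in the thin slab around $H$; by general position this can be done disjointly from $T$, giving $|D_+' \cap T| = |D_+ \cap T|$ and matching sc-type. Topologically $R_-$ is then $H_- \times I$ with a generalized 2-handle attached along $\boundary D_+'$ with core $D_+'$; equivalently, $F$ is $H_-$ sc-compressed along $D_+'$. Boundary-reducing $R_-$ along $D_+'$ yields a trivial product compressionbody $F \times I$ with vertical graph: one vertical arc for each point of $T \cap H$ outside small neighborhoods of $\boundary D_\pm$, plus (when $D_+$ is a cut or semi-cut disc) two half-arcs obtained by splitting the single arc of $T \cap N(D_+) \cap R_-$ along $D_+'$, which pair the newly exposed punctures on $D_+'$ with matching punctures on $F$. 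Thus $\{D_+'\}$ is a complete collection of sc-discs for $R_-$, so $R_-$ is a v.p.-compressionbody with $\boundary_+ = H_-$ and $\boundary_- = F$. A symmetric argument handles $R_+$ using a parallel copy $D_-' \subset R_+$ of $D_-$. The adjacency condition is then immediate: $H_\pm$ separates $V_\pm$ from $R_\pm$ and each component of $F$ separates $R_-$ from $R_+$.

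The most delicate step is when one of $D_\pm$ is semi-compressing or semi-cut; then $F$ gains a sphere component bounding a 3-ball $B$ inside $R_-$ or $R_+$. The hypothesis built into the setup of untelescoping, that no component of $\mc{H}^- \cup \boundary M$ is a sphere meeting $T$ in exactly one point, together with the ``semi'' condition that $\boundary D_\pm$ bounds a disc on $H$, allows one to verify that $B$ is a trivial ball compressionbody --- either empty of $T$ or containing one $\boundary$-parallel arc. Peeling $B$ off and recording its sphere boundary as a legitimate thin-surface component of $F$, the remainder of $R_-$ or $R_+$ inherits the v.p.-compressionbody structure described above, and the argument concludes.
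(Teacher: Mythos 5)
Your first three paragraphs are correct and follow essentially the same route as the paper: the outer pieces are handled by Lemma \ref{Lem: Invariance}(3) (equivalently, by a complete collection $\mc{D}_\pm$ containing $D_\pm$, with $\mc{D}_\pm \setminus D_\pm$ serving the reduced pieces), and each region between $H_\mp$ and $F$ is recognized as a v.p.-compressionbody because a pushed-off copy of $D_\pm$ is a complete collection for it, reducing it to $F \times I$ with vertical arcs. That argument already covers the case of semi-compressing and semi-cut discs, so your final paragraph is unnecessary --- and its geometric picture is off: the sphere component of $F$ created by a ``semi'' disc does not bound a 3-ball contained in $R_-$ or $R_+$; it is a component of $\boundary_-$ of each of the two adjacent regions (typically cobounding a trivial \emph{product} with a sphere component of $H_\pm$, which is later removed by consolidation, not in this lemma). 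Had you actually ``peeled off'' a trivial ball compressionbody with that sphere as its $\boundary_+$, the sphere would become a thick rather than thin surface and the decomposition would violate Definition \ref{Def: multiple bridge surfaces}; fortunately no such step is needed.
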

\begin{proof}
Let $H \subset \mc{H}^+$ be the component which is untelescoped using discs $\{D_-, D_+\}$. Let $W_+$ and $W_-$ be the two compressionbody components of $M\setminus \mc{H}$ that have copies of $H$ as their positive boundaries. Let $\mc{D}_\pm$ be a complete collections of discs for the compressionbodies $W_\pm$ containing $D_\pm$. The discs $\mc{D}_\pm \setminus D_\pm$ after an isotopy are a complete collection of discs for the components of $(M,T)\setminus \mc{K}$ adjacent to $H_\pm$ and not adjacent to $F$. An isotopy of the disc $D_\pm$ makes it into an sc-disc for $H_\mp$. Boundary-reducing the submanifold bounded by $H_\pm$ and $F$ using $D_\mp$ creates the union of product compressionbodies.  Thus, $\mc{K} \in \vpH(M,T)$.
\end{proof}

To extend this operation to oriented multiple v.p.-bridge surfaces we  simply give $H_-$ and $H_+$ the transverse orientations induced from $H$. We defer until Lemma \ref{Lem: niceness persists} the proof that if $\mc{H}$ is oriented, then so is $\mc{K}$. 

\subsection{Consolidation}

Untelescoping usually creates product compressionbodies which need to be removed as in Scharlemann-Thompson thin position. In our situation though this process is complicated by the presence of the graph $T$. We call the operation ``consolidation.''

\begin{definition}\label{Consolidation}
Suppose that $\mc{H}$ is an (oriented) multiple v.p.-bridge surface for $(M,T)$ and that $(P, T_P)$ is a product compressionbody component of $(M,T) \setminus \mc{H}$ which is adjacent to a component of $\mc{H}^-$ (and, therefore, not adjacent to a component of $\boundary M$.) Let $\mc{K} = \mc{H} \setminus (\boundary_- P \cup \boundary_+ P)$. If $\mc{H}$ is oriented, give each component of $\mc{K}$ the induced orientation from $\mc{H}$. We say that $\mc{K}$ is obtained from $\mc{H}$ by \defn{consolidation} or by \defn{consolidating} $(P, T_P)$. (These terms were introduced in \cite{TT2}.)
\end{definition}

The next two lemmas verify that consolidation is a valid operation in $\vpH(M,T)$.  See Figure \ref{Fig: combining indices} for a schematic depiction of the v.p.-compressionbodies in the following lemma. 

\begin{figure}[ht]
\labellist
\small\hair 2pt
\pinlabel {$A$} at 198 194
\pinlabel {$P$} at 279 98
\pinlabel {$B$} at 279 36
\pinlabel {$C$} at 652 194
\pinlabel {$\boundary_+ A$} [r] at 1 237
\pinlabel {$\boundary_- A$} [r] at 42 131
\pinlabel {$\boundary_+P = \boundary_+ B$} [r] at 213 67
\endlabellist
\includegraphics[scale=0.3]{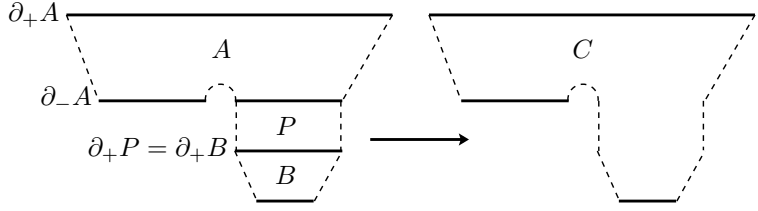}
\caption{The v.p.-compressionbodies $A$, $B$, $P$, and $C$ in Lemma \ref{Lem: Consolidation preserves v.p.-comprbdy}.}
\label{Fig: combining indices}
\end{figure}

\begin{lemma}\label{Lem: Consolidation preserves v.p.-comprbdy}
Suppose that $(P,T_P)$ is a trivial product compression body and that $(A,T_A)$ and $(B, T_B)$ are v.p.-compressionbodies with interiors disjoint from each other and from the interior of $P$.  Assume also that $\boundary _- P \subset \boundary_- A$ and $\boundary_+B = \boundary_+ P$. Let $(C,T) = (A, T_A) \cup (P, T_P) \cup (B, T_B)$ and assume that $T$ is properly embedded in $C$. Then $(C,T)$ is a v.p.-compressionbody. 
\end{lemma}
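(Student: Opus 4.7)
The plan is to construct a collection $\mc{D}$ of sc-discs for $\boundary_+ C := \boundary_+ A$ whose boundary-reduction decomposes $(C,T)$ into trivial compressionbodies. The collection will be built by taking complete collections for $A$ and $B$ separately and using the product structure of $P$ to transport the $B$-discs across to $\boundary_+ A$.

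First, let $\mc{D}_A$ and $\mc{D}_B$ be complete collections of sc-discs for $(A,T_A)$ and $(B,T_B)$, respectively. Boundary-reducing $(A,T_A)$ along $\mc{D}_A$ decomposes it into a disjoint union $\bigsqcup_i A_i$ of trivial compressionbodies. Since $\boundary_- P$ is a component of $\boundary_- A$, it lies in the negative boundary of exactly one piece $A_0$; because $A_0$ has nonempty negative boundary it must be a trivial product compressionbody, so $A_0 \cong \boundary_- P \times I$ with $T \cap A_0$ a collection of vertical arcs. Gluing two trivial product compressionbodies along a common boundary again gives a trivial product compressionbody, so $A_0 \cup P$ is itself a trivial product compressionbody whose ``other'' end is $\boundary_+ P = \boundary_+ B$.

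Using this product structure, I extend each $D \in \mc{D}_B$ across $A_0 \cup P$ by appending the vertical annulus $\boundary D \times I$; after a small isotopy the boundary of the resulting disc $\widetilde{D}$ lies in $\boundary_+ A$. Because the graph in $A_0 \cup P$ consists of vertical arcs, a generic choice of $\boundary D \times I$ is disjoint from it, so $|\widetilde{D} \cap T| = |D \cap T|$ and $\widetilde{D}$ is an sc-disc of the same type as $D$ (any $\widetilde{D}$ that is $\boundary$-parallel may simply be discarded). Set $\mc{D} := \mc{D}_A \cup \{\widetilde{D} : D \in \mc{D}_B\}$; these are pairwise disjoint sc-discs with boundary on $\boundary_+ C$.

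It remains to verify that boundary-reducing $(C,T)$ along $\mc{D}$ produces only trivial compressionbodies. The pieces $A_i$ with $i \neq 0$ are unaffected. In the remaining piece $A_0 \cup P \cup B$, cutting along $\widetilde{\mc{D}_B}$ simultaneously cuts $B$ along $\mc{D}_B$ (producing trivial compressionbody pieces $B_j$) and cuts the product $A_0 \cup P$ along the vertical annuli $\boundary \mc{D}_B \times I$ (producing product compressionbodies, each glued along its negative boundary to the positive boundary of some $B_j$). The key step, which I expect to be the main obstacle, is to verify carefully that gluing a trivial product compressionbody to the positive boundary of a trivial compressionbody again yields a trivial compressionbody: a product glued to a ball is a ball, a product glued to a product is a product, and in each case the vertical arcs of the product combine with the bridge or vertical arcs of the trivial piece to give the correct graph structure (a $\boundary$-parallel tree with at most one interior vertex, or a collection of vertical arcs). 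Once this is checked, every component of the boundary-reduction is a trivial compressionbody, and so $(C,T)$ is a v.p.-compressionbody.
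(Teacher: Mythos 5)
Your argument is correct, and it takes a genuinely different route from the paper. The paper's proof is a one-liner: it invokes the dual characterization of a v.p.-compressionbody as a union of trivial compressionbodies with $1$-handles (some possibly carrying a tangle core) attached to their positive boundaries, and under that description the claim is immediate. You instead work with the primal definition, explicitly exhibiting a defining collection of sc-discs for $\boundary_+ C = \boundary_+ A$: namely $\mc{D}_A$ together with the discs $\widetilde{D}$ obtained by pushing each $D\in\mc{D}_B$ across the collar $A_0 \cup P$. Your approach is more hands-on and requires the gluing verification you flag, but it has the advantage of not relying on the dual description (which the paper asserts but does not prove).

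Two points deserve tightening. First, the pieces of $A_0\cup P$ cut off by the vertical annuli $\boundary D\times I$ are \emph{not} trivial product compressionbodies in the paper's sense (which requires a closed connected surface): they are products $S_j\times I$ where $S_j$ is a compact surface with boundary, attached to $B_j$ only along the codimension-$0$ subsurface $S_j\subset\boundary_+ B_j$ (the complement of the scar discs), not along all of $\boundary_+ B_j$. So the statement you actually need is that attaching such a partial collar $S_j\times I$ to a trivial compressionbody $B_j$ along $S_j\subset\boundary_+ B_j$, with the vertical arcs in $S_j\times I$ extending the points of $T\cap S_j$, yields a pair homeomorphic (rel the new boundary data) to $(B_j, T_{B_j})$, hence still trivial. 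That is true and is exactly the ``key step'' you identify; it should be stated and verified directly rather than deferred. Second, you should briefly justify that $\boundary\widetilde{D}$, which naturally lives on $\boundary_+ A_0$, can be isotoped off the finitely many scar discs of $\mc{D}_A$ so as to lie in $\boundary_+ A_0 \cap \boundary_+ A$, keeping the $\widetilde{D}$ pairwise disjoint; and that discarding any $\boundary$-parallel $\widetilde{D}$ is harmless because such a disc merely splits off a trivial ball compressionbody, whose reabsorption into its neighbor leaves that neighbor trivial. With those details filled in, the proof is complete.
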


\begin{proof}
We can dually define a v.p.-compressionbody to be a 3-manifold containing a properly embedded 1-manifold obtained by taking a collection of trivial v.p.-compressionbodies and adding to their positive boundary some 1-handles and 1-handles containing a single piece of tangle as their core. With this dual definition, the lemma is obvious.
\end{proof}

\begin{lemma}\label{lem:consolidation}
Suppose that $\mc{H}$ is an (oriented) multiple v.p.-bridge surface for $(M,T)$ and that $\mc{K}$ is obtained by consolidating a product region $(P,T_P)$ of $\mc{H}$. Then $\mc{K}$ is an (oriented) multiple v.p.-bridge surface for $(M,T)$. 
\end{lemma}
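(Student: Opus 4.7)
The plan is to reduce the statement to Lemma \ref{Lem: Consolidation preserves v.p.-comprbdy} by identifying the two v.p.-compressionbodies which are glued to $P$ along $\boundary_- P$ and $\boundary_+ P$. Let $(A, T_A)$ be the component of $(M,T) \setminus \mc{H}$ on the side of $\boundary_- P$ opposite $P$, so that $\boundary_- P \subset \boundary_- A$, and let $(B, T_B)$ be the component on the side of $\boundary_+ P$ opposite $P$, so that $\boundary_+ B = \boundary_+ P$. The third bullet of Definition \ref{Def: multiple bridge surfaces} guarantees $A \neq P$ and $B \neq P$. In the generic case $A \neq B$, Lemma \ref{Lem: Consolidation preserves v.p.-comprbdy} applied to $(A,T_A) \cup (P, T_P) \cup (B, T_B)$ immediately gives a single v.p.-compressionbody whose positive boundary is $\boundary_+ A$ and whose negative boundary is $(\boundary_- A \setminus \boundary_- P) \cup \boundary_- B$. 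The degenerate case $A = B$ (where $A \cup P$ forms a closed region of $(M,T) \setminus \mc{H}^-$) is handled by the same dual handle description as in the proof of Lemma \ref{Lem: Consolidation preserves v.p.-comprbdy}.

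Next I would verify the three bullets in the definition of multiple v.p.-bridge surface for $\mc{K}$. All other components of $(M,T) \setminus \mc{K}$ are unchanged components of $(M,T) \setminus \mc{H}$, so the first two bullets (partition of $\mc{K}$ into $\mc{K}^\pm$ and identification with $\bigcup \boundary_\pm$) follow from the corresponding properties of $\mc{H}$ together with the merged compressionbody just constructed. For the third bullet, any component $S$ of $\mc{K}$ adjacent to the merged compressionbody on both sides would have to already be adjacent to (some subset of) $\{A, P, B\}$ on both sides in $\mc{H}$; by hypothesis on $\mc{H}$ the only such component was $\boundary_\pm P$ itself, which has been removed.

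For the oriented statement, the remaining components of $\mc{K}$ inherit their transverse orientations from $\mc{H}$, and the two bullets of the orientation conditions hold trivially on unchanged compressionbodies. On the merged compressionbody the key point is that the second bullet of the orientation definition, applied inside $(P, T_P)$, forces $\boundary_+ P$ and $\boundary_- P$ to carry opposite transverse orientations relative to $P$; combined with the corresponding conditions inside $A$ and $B$, this is exactly what is needed to make $\boundary_+ A$ and $(\boundary_- A \setminus \boundary_- P) \cup \boundary_- B$ satisfy the two bullets inside the new compressionbody. Finally, any flow line for $\mc{K}$ lifts to a flow line for $\mc{H}$ by inserting transverse crossings with $\boundary_- P$ and $\boundary_+ P$ whenever it passes through the merged region; hence the absence of closed flow lines for $\mc{H}$ implies the same for $\mc{K}$.

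The only nontrivial step is the orientation bookkeeping for the merged compressionbody. The subtle point is that the transverse orientations on $\boundary_\pm P$ do not just happen to agree with $A$ and $B$—this agreement is forced by the second bullet of the orientation conditions applied inside $(P, T_P)$, which is precisely why consolidation (and not arbitrary removal of a thick/thin pair) is the correct operation for oriented multiple v.p.-bridge surfaces.
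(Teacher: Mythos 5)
Your proof follows the same route as the paper's, which also just invokes Lemma \ref{Lem: Consolidation preserves v.p.-comprbdy} for the merged region and notes that a closed flow line for $\mc{K}$ could be isotoped (by inserting crossings with $\boundary_\pm P$) to a closed flow line for $\mc{H}$; you simply supply the orientation bookkeeping that the paper leaves implicit, and that bookkeeping is correct.

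One caveat: your dismissal of the case $A=B$ is not adequate. If $A = B$, then after removing $\boundary_- P \cup \boundary_+ P$ the merged region $A \cup P$ has boundary contained entirely in $\mc{K}^- \cup \boundary M$, so there is no candidate for $\boundary_+$ of the merged region and it cannot be a v.p.-compressionbody; the ``dual handle description'' does not rescue this. The correct observation is that $A = B$ simply does not occur in the oriented setting: if it did, a short transverse arc through $P$ from $\boundary_+ P$ to $\boundary_- P$ would close up in $A$ to a closed flow line, contradicting the definition of an oriented multiple v.p.-bridge surface. (For the unoriented statement, $A = B$ genuinely can occur --- e.g.\ $S^2 \times S^1$ with one thick and one thin sphere --- so consolidation there must tacitly exclude this configuration; the paper's proof does not comment on it either.) With the $A=B$ case ruled out this way rather than by hand-waving, your argument is complete.
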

\begin{proof}
This follows immediately from Lemma \ref{Lem: Consolidation preserves v.p.-comprbdy} and the observation that any closed flow line for $\mc{K}$ could be isotoped to be a closed flow line for $\mc{H}$.
\end{proof}

\subsection{Elementary Thinning Sequences}\label{sec:elem thinning sequences}

As mentioned before, untelescoping often produces product regions. These product regions, in general, are of two types -- they can be between a thin and thick surface neither of which existed before the untelescoping or they can be between a newly created thick surface and a thin surface (or a boundary component) that existed before the untelescoping operation. In fact, consolidating product regions of the first type can create additional product regions of the second type. The next definition specifies the order in which we will consolidate, before untelescoping further.

\begin{definition}\label{def:oriented elem. thinning}
Suppose that $\mc{H}$ is an sc-weakly reducible oriented multiple v.p.-bridge surface for $(M,T)$. Let $\mc{H}_1$ be obtained by untelescoping $\mc{H}$ using an sc-weak reducing pair. Let $\mc{H}_2$ be obtained by consolidating all trivial product compressionbodies of $\mc{H}_1\setminus\mc{H}$. There may now be trivial product compressionbodies in $M \setminus \mc{H}_2$. Let $\mc{H}_3$ be obtained by consolidating all those products.  We say that $\mc{H}_3$ is obtained from $\mc{H}$ by an \defn{elementary thinning sequence}. 
\end{definition}

See Figure \ref{Fig: thinning sequence} for a depiction of the creation of $\mc{H}_2$ from $\mc{H}$. 

\begin{center}
\begin{figure}[tbh]
\labellist
\small\hair 2pt
\pinlabel {$\mc{H}$} [bl] at 4 333
\pinlabel {$\mc{H}_1$} [bl] at 246 333
\pinlabel {$\mc{H}_2$} [bl] at 4 3
\pinlabel {$\mc{H}_3$} [bl] at 349 3
\endlabellist
\includegraphics[scale=0.4]{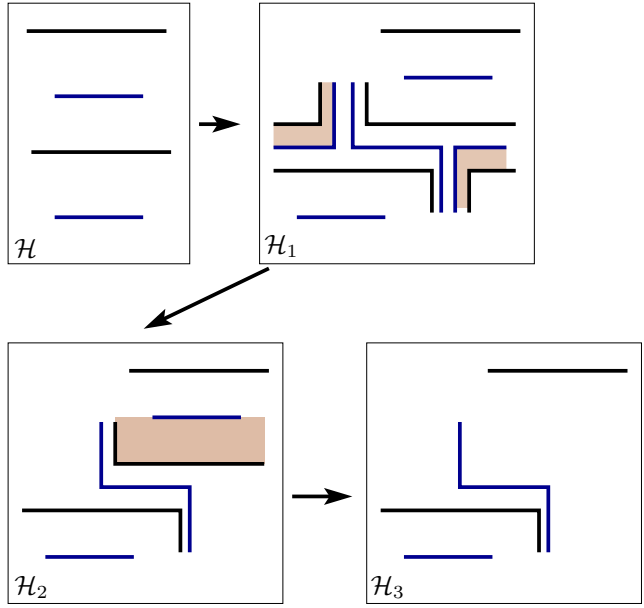}
%%\scalebox{0.4}{\input{bridgeboundarydisc1.eps_t}}
\caption{The surface $\mc{H}_2$ is created by untelescoping and consolidation. One or both of the compressionbodies $M \setminus \mc{H}_2$ shown in the figure may be product regions adjacent to $\mc{H}^-$. We consolidate those product regions to obtain $\mc{H}_3$.}
 \label{Fig:  thinning sequence}
\end{figure}
\end{center}

To understand the effect of an elementary thinning sequence, we examine the untelescoping operation a little more carefully.

\begin{lemma}\label{lem:Sep weak reduction}
Suppose that  $H$ is a connected (oriented) v.p.-bridge surface and that $D_\up$ and $D_\down$ are an sc-weak reducing pair. Let $H_- \subset H_\down$ and $H_+ \subset H_\up$ be the new thick surfaces created by untelescoping $H$. Let $F$ be the thin surfaces. Then the following are equivalent for a component $\Phi$ of $F$:
\begin{enumerate}
\item $\Phi$ is not doubly spotted and is adjacent to a remnant of $D_\up$ (or $D_\down$, respectively).
\item The disc $D_\up$ (or $D_\down$, respectively) is separating and $\Phi$ bounds a product region in $H_\up$ (or $H_\down$, respectively) with a component of $H_+$ (or $H_-$, respectively.)
\end{enumerate}
\end{lemma}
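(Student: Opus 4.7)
My plan is to track the scars (copies of $D_\up$ and $D_\down$) inherited by $F$, exploiting the elementary fact that a simple closed curve separates a connected surface if and only if cutting along it disconnects the surface. Compressing the connected surface $H$ along $D_\up$ produces $H_+$ together with two parallel $D_\up$-scars: both land on a single component of $H_+$ if $D_\up$ is non-separating in $H$, and one lands on each of the two components of $H_+$ if $D_\up$ is separating. The further compression of $H_+$ along $D_\down$ to produce $F$ contributes two $D_\down$-scars and only alters the component of $H_+$ containing $\boundary D_\down$. I will focus on the $D_\up$ statement, the $D_\down$ half being symmetric.

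For the direction $(2) \Rightarrow (1)$, I would observe that the hypothesized product region gives an isotopy from $\Phi$ to a component $H_+^c$ of $H_+$ matching their scar patterns. Since $H_+$ was constructed by compressing only along $D_\up$, it carries no $D_\down$-scar; so $\Phi$ has none either and is therefore not doubly spotted. Because $D_\up$ is assumed separating, every component of $H_+$ picks up a $D_\up$-scar, so $H_+^c$ does, and hence so does $\Phi$.

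For $(1) \Rightarrow (2)$, the main step is ruling out the possibility that $D_\up$ is non-separating. If it were, then $H_+$ would be connected with both $D_\up$-scars on it, and a short case analysis on whether $D_\down$ separates $H_+$ shows that every component of $F$ would carry a $D_\down$-scar: either $F$ is a single component bearing all four scars (when $D_\down$ is non-separating in $H_+$), or $F$ splits into two components each carrying exactly one $D_\down$-scar (when $D_\down$ is separating in $H_+$). Both alternatives contradict $\Phi$ having no $D_\down$-scar, so $D_\up$ must be separating. I would then write $H_+ = H_+^A \cup H_+^B$ with $H_+^A$ the component meeting $\boundary D_\down$; the $D_\down$-compression leaves $H_+^B$ completely untouched, so $H_+^B$ reappears verbatim as a component of $F$, while the same dichotomy applied to $H_+^A$ places a $D_\down$-scar on each of its descendants in $F$. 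Hence $\Phi = H_+^B$, and since $H_+^B \subset H_+$ was pushed slightly into $H_\up$ as part of the untelescoping construction while the corresponding component of $F$ remains at the original level of $H$, these two parallel copies cobound the required trivial product region inside $H_\up$.

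The main delicate point to watch will be distinguishing "separating in $H_+$" from "separating in $H$" when analyzing the second compression; the argument above sidesteps any need to decide which actually holds, since in the non-separating-$D_\up$ case both possibilities for $D_\down$'s behavior in $H_+$ produce the same contradiction.
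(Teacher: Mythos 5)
Your proof is correct and follows essentially the same approach as the paper's: tracking the scars (remnants of $D_\up$ and $D_\down$) through the two surgeries and using the separating/non-separating dichotomy for both discs. Your case analysis in the $(1)\Rightarrow(2)$ direction is actually laid out more carefully than the paper's terse version (the paper's phrase ``$\Phi$ would have two spots from $D_\up$'' is not always literally true when $D_\up$ is non-separating, but your observation that \emph{every} component of $F$ then carries a $D_\down$-scar is the correct and cleaner reason), and you also explicitly verify that $\Phi$ carries a $D_\up$-scar in the converse direction, which the paper leaves implicit.
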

\begin{proof}
Suppose $\Phi$ is only adjacent to $D_\up$. In this case $D_\up$ must be separating as otherwise $\Phi$ would have two spots from $D_\up$, and as $H$ is connected, it would also have to have a spot coming from $D_\down$. Compressing $H$ along $D_\up$ then results in two components. Let $H'$ be the component that doesn't contain $\bdd D_\down$. Then $H'$ is not affected by compressing along $D_\down$ to obtain $F$. Thus $H'$ is parallel to $\Phi$.

Conversely if $\Phi$ is parallel to some component $H'$ of $H_+$ say, then $\Phi$ must be disjoint from the compressing disc $D_\down$ and is therefore not double spotted. 
\end{proof}

Using the notation from Definition \ref{def:oriented elem. thinning}, we have:
\begin{lemma}\label{lem: Controlling Product Regions}
Suppose that $\mc{H} \in \vpH(M,T)$ and that $(M,T) \setminus \mc{H}$ has no trivial product compressionbodies adjacent to $\mc{H}^-$. Let $\mc{H}_1$, $\mc{H}_2$, and $\mc{H}_3$ be the surfaces in an elementary thinning sequence beginning with the untelescoping of a component $H \subset \mc{H}^+$. Then the doubly spotted component of $\mc{H}_1$ persists into $\mc{H}_3$ and no component of $(M,T) \setminus \mc{H}_3$ is a trivial product compressionbody adjacent to $\mc{H}_3^-$.
\end{lemma}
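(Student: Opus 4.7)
The plan is to track the doubly spotted component $F_0$ through the thinning sequence and to carefully record which compressionbodies remain non-product after each consolidation step.

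First I would establish notation: let $N$ be the component of $M\setminus \mc{H}^-$ containing $H$, so that $N\setminus H = N_\up \cup N_\down$, with neither $N_\up$ nor $N_\down$ being a trivial product (since $H$ admits the sc-weak reducing pair $\{D_-, D_+\}$, each side of $H$ contains an sc-disc). Untelescoping decomposes $N$ into four v.p.-compressionbodies $N_\up'$, $Q_+$, $Q_-$, $N_\down'$, where $Q_+$ sits between $H_+$ and $F$ and contains a compressing disc for $H_+$ dual to $D_+$, and similarly for $Q_-$ with $D_-$. By Lemma \ref{lem:Sep weak reduction}, each trivial product component of $\mc{H}_1\setminus\mc{H}$ pairs a non-doubly-spotted component of $F$ with a component of $H_\pm$. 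Since $F_0$ is doubly spotted, no product of $\mc{H}_1\setminus\mc{H}$ has $F_0$ as a boundary component, so step one leaves $F_0 \in \mc{H}_2^-$. Each component of $M\setminus \mc{H}_2$ adjacent to $F_0$ then contains (possibly after merging with pieces of $N_\pm'$ in step one) one of the components of $Q_\pm$ together with its dual disc to $D_\pm$, and is therefore not a trivial product; so step two preserves $F_0$ as well, giving $F_0 \in \mc{H}_3^-$.

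For the second claim, let $P \subset M\setminus \mc{H}_2$ be a trivial product compressionbody consolidated in step two; by the analysis above $\boundary_- P \neq F_0$, so $\boundary_- P$ is a component $S$ of $\mc{H}^-$. Let $C$ be the v.p.-compressionbody of $M\setminus \mc{H}$ (lying outside $N$) with $S \subset \boundary_- C$. By the hypothesis on $\mc{H}$, $C$ is not a trivial product. Consolidating $P$ merges $P$ with $C$ on the thin side and with some compressionbody $A$ on the thick side, giving a single v.p.-compressionbody $R$ via Lemma \ref{Lem: Consolidation preserves v.p.-comprbdy}. I would then argue that if $R$ were a trivial product, then $C$ itself, as a sub-compressionbody of $R$ whose positive boundary is a component of $\boundary_+ R$, would also have to be a trivial product, contradicting the choice of $C$. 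Hence no component of $(M,T)\setminus \mc{H}_3$ adjacent to $\mc{H}_3^-$ is a trivial product.

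The main obstacle lies in making the sub-compressionbody-in-product argument rigorous: one must analyze how $C$ embeds in $R$ using the fact that properly embedded incompressible surfaces in a trivial product are $\boundary$-parallel, and adapt this in the presence of the graph $T_R$. Secondary difficulties include verifying that the dual disc obstruction from $Q_\pm$ really survives the step-one mergings adjacent to $F_0$, handling the variant argument when $P$ is only adjacent to $\boundary M$ rather than to $\mc{H}^-$ (so the hypothesis does not apply to the thin side of $P$), and ensuring the argument goes through when several products in $M \setminus \mc{H}_2$ are consolidated simultaneously, rather than just one at a time.
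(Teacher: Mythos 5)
Your architecture coincides with the paper's: only non-doubly-spotted components of $F$ cobound the products consolidated in step one (Lemma \ref{lem:Sep weak reduction}), the compressionbodies meeting $F_0$ retain the dual discs $D_\mp$ as sc-discs for their positive boundaries and so are never trivial products by Lemma \ref{Lem: Invariance}(1), and every other component is referred back to the hypothesis on $\mc{H}$. The paper's proof is exactly this, stated more briskly.

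The genuine issue is the step you yourself flag as the main obstacle. The implication ``$R$ a trivial product $\Rightarrow$ $C$ a trivial product'' is true, but the tool you propose --- incompressible surfaces in a product are $\boundary$-parallel --- does not apply to $S=\boundary_- C$: Lemma \ref{Lem: Invariance}(2) only makes $S$ c-incompressible from the $C$ side, while from the other side $S$ is parallel through $P$ to $\boundary_+ A$, and $A$ is exactly the compressionbody carrying the dual sc-disc $D_\pm$; so $S$ will typically be compressible in $R$ and need not be a level surface. Two repairs are available from the paper's own machinery. (i) Since $C$ is neither a trivial product (by the hypothesis on $\mc{H}$, as $C$ is an untouched compressionbody of $(M,T)\setminus\mc{H}$ adjacent to $S\in\mc{H}^-$) nor a trivial ball, Lemma \ref{Lem: Invariance}(1) gives an sc-disc $D$ for $\boundary_+ C=\boundary_+ R$ inside $C$; $D$ remains an sc-disc for $\boundary_+ R$ in $R$, because a parallelism ball for $D$ with boundary in $\overline{C}\cup\boundary_+ R$ that is not contained in $C$ would have to contain all of $P\cup A$ and hence $F_0\subset\boundary_- R$, which is impossible; so $R$ is not a trivial product. (ii) By Lemma \ref{lem:trivialconsolidation}, $\mu(R)=\mu(C)+\mu(A)-6$; a trivial product has index $6$ and neither $C$ nor $A$ is a ball, so $\mu(C)=\mu(A)=6$; but a non-trivial v.p.-compressionbody of index $6$ admits an sc-disc, and by Lemma \ref{lem:compressing} every piece of the resulting $\boundary$-reduction has index strictly less than $6$ and is therefore a ball, forcing $\boundary_- C=\nil$ and contradicting $S\subset\boundary_- C$. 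Either route is in substance what the paper does (it exhibits an sc-disc for the positive boundary of the merged compressionbody). Your remaining ``secondary difficulties'' are real but minor; in particular at most two products can be consolidated in step two and they involve disjoint triples of compressionbodies, so simultaneity causes no trouble.
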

\begin{proof}
Let $H_-$ and $H_+$ be the thick surfaces resulting from untelescoping the thick surface $H \subset \mc{H}^+$ and let $F$ be the thin surface, with $F_0$ the doubly spotted component. Since $F$ is obtained by compressing using an sc-disc, $F$ is not parallel to either of $H_-$ or $H_+$. In creating $\mc{H}_2$ we remove all components of $F$ which are not doubly spotted (Lemma \ref{lem:Sep weak reduction}). The doubly spotted surface is not parallel to the remaining components of $H_-$ or $H_+$ since we can obtain it by an sc-compression of each of them. Thus, the doubly spotted component persists into $\mc{H}_2$. Let $H'_-$ and $H'_+$ be the components of $H_-$ and $H_+$ remaining in $\mc{H}_2$. If either of $H_-$ or $H_+$ bounds a trivial product compressionbody with $\mc{H}^-$, we create $\mc{H}_3$ by consolidating those trivial product compressionbodies.

Suppose that a component $(W, T_W) \subset (M, T) \setminus \mc{H}_3$ contains $F \subset \boundary_- W$.  Since $H_-$ and $H_+$ each had an sc-compression producing the doubly-spotted components of $F$, $(W, T_W)$ must contain an sc-disc for $\boundary_+ W$. Consequently, $(W, T_W)$ is not a trivial product compressionbody. The result then follows from the assumption that no component of $(M,T) \setminus \mc{H}$ was a trivial product compressionbody adjacent to $\mc{H}^-$.
\end{proof}

\begin{corollary}\label{Lem: niceness persists}
Suppose that $\mc{H}, \mc{K}$ are multiple v.p.-bridge surfaces for $(M,T)$ such that $M \setminus \mc{H}$ has no trivial product compressionbodies adjacent to $\mc{H}^-$. Assume that $\mc{K}$ is obtained from $\mc{H}$ using an elementary thinning sequence. Then the following are true:
\begin{enumerate}
\item $\mc{K}^- \neq \nil$ 
\item $\mc{K}$ has no trivial product compressionbodies disjoint from $\boundary M$, 
\item If $\mc{H}$ is oriented, so is $\mc{K}$.
\end{enumerate}
\end{corollary}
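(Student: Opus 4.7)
The plan is as follows. Parts (1) and (2) will follow almost immediately from Lemma \ref{lem: Controlling Product Regions}, while part (3) requires real work. For (1), I observe that the doubly-spotted component $F_0$ of the new thin surface $F$ created by untelescoping $\mc{H}$ persists into $\mc{H}_3 = \mc{K}$, so $F_0 \subset \mc{K}^-$ and in particular $\mc{K}^- \neq \nil$. For (2), any trivial product compressionbody $(P, T_P)$ in $(M,T) \setminus \mc{K}$ that is disjoint from $\boundary M$ has its negative boundary contained entirely in $\mc{K}^-$, so it is adjacent to $\mc{K}^-$; but Lemma \ref{lem: Controlling Product Regions} forbids this.

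For (3) I first describe how the orientation on $\mc{K}$ is constructed. The surfaces $H_-$ and $H_+$ produced by untelescoping $H$ inherit their transverse orientations from $H$ (all pointing in the same ``upward'' sense), and the new thin surface $F$ inherits the same transverse orientation, since compressing $H$ along the sc-weak reducing pair $\{D_\up, D_\down\}$ does not flip any local transverse direction. The components of $\mc{H}$ not affected by the untelescoping pass to $\mc{K}$ unchanged, and consolidation merely deletes pairs of surfaces of identical transverse direction without altering those that remain. A direct check of the compatibility conditions of Definition \ref{Def: multiple bridge surfaces} at each component of $(M,T)\setminus\mc{K}$ confirms they are satisfied: on newly created compressionbodies this is immediate from the induced orientation near $H$, and on unchanged compressionbodies it is inherited from $\mc{H}$.

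The substantive task is to verify that $\mc{K}$ admits no closed flow line. Suppose, toward contradiction, that $\gamma$ is such a closed flow line. Since consolidation only deletes surfaces, $\mc{K} \subset \mc{H}_1$ as a set of surfaces. Each component of $\mc{H}_1 \setminus \mc{K}$ is a boundary component of a trivial product compressionbody whose two boundaries carry matching transverse directions (consistent with unimpeded flow through the product). Thus, after a small transverse perturbation, $\gamma$ crosses each surface of $\mc{H}_1 \setminus \mc{K}$ in the direction of its transverse orientation and is therefore a closed flow line for $\mc{H}_1$. Now $\mc{H}_1$ differs from $\mc{H}$ only in that $H$ is replaced by the three parallel-in-direction surfaces $H_-$, $F$, $H_+$. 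Collapsing a product neighborhood of $H_- \cup F \cup H_+$ down to a neighborhood of $H$ and transporting $\gamma$ across this isotopy, each of the original consistent crossings of $\gamma$ through $H_-, F, H_+$ becomes a single crossing of $H$ in the direction of the transverse orientation on $H$. The image of $\gamma$ is then a closed flow line for $\mc{H}$, contradicting the hypothesis that $\mc{H}\in\vpoH(M,T)$.

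The main obstacle is part (3): one must choose compatible transverse orientations on the newly introduced surfaces and then check that no closed flow line can be assembled from the new configuration. The delicate point is the perturbation/collapse argument converting a hypothetical closed flow line of $\mc{K}$ into one of $\mc{H}$; this relies crucially on the fact that $H_-$, $F$, and $H_+$ all inherit the same transverse direction from $H$, and that trivial product regions consolidated away contribute only to consistent crossings.
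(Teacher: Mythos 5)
Parts (1) and (2) of your proof are correct and match the paper: both follow from Lemma \ref{lem: Controlling Product Regions} via the persistence of the doubly spotted component and the absence of products adjacent to $\mc{K}^-$.

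Part (3), however, contains a genuine gap in the step that converts a hypothetical closed flow line $\gamma$ of $\mc{H}_1$ into one of $\mc{H}$. You assert that there is an isotopy ``collapsing a product neighborhood of $H_- \cup F \cup H_+$ down to a neighborhood of $H$,'' and then transport $\gamma$ across it. But $H_-$, $F$, and $H_+$ are not parallel copies of $H$ --- they are obtained from $H$ by \emph{compressing} along the sc-discs $D_\pm$. The region of $M$ between $H_-$ and $H_+$ (containing both $F$ and $H$) is a regular neighborhood of $H \cup D_- \cup D_+$, that is, a collar of $H$ with $2$-handles attached along $\eta(\boundary D_\pm)$. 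This region is not homeomorphic to a collar of $H$, so no ambient isotopy of $M$ can shrink it to one, and it does not deformation retract to $H$ either. Your phrase ``parallel-in-direction'' correctly records that the transverse orientations agree, but agreement of transverse directions does not give a product structure, and the collapsing move you invoke does not exist.

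The paper's argument sidesteps this by observing that $H$ can be recovered from $H_\pm$ by attaching a tube (a $1$-handle dual to $D_\pm$, possibly along an arc of $T \setminus \mc{H}_1$). One then isotopes $\alpha$ off this tube, after which $\alpha \cap H = \alpha \cap H_\pm$ and every such crossing is already consistent; hence $\alpha$ is verbatim a closed flow line for $\mc{H}$, a contradiction. Your conclusion is achievable --- one can, for each arc of $\gamma$ inside the slab between $H_-$ and $H_+$, replace it by an arc meeting $H$ once transversally, using the fact that $H$ separates that slab into two pieces --- but the justification you give for it is incorrect and would need to be replaced by the tubing argument or something equivalent.
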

\begin{proof}
By Lemma \ref{lem: Controlling Product Regions}, the doubly spotted component of $\mc{K}^- \setminus \mc{H}^-$ does not get consolidated during the elementary thinning sequence and $\mc{K}$ has no trivial product compressionbodies adjacent to $\mc{K}^-$. 

Suppose that $\mc{H}$ is oriented. We wish to show that $\mc{K}$ is oriented. We have described how to give transverse orientations to the components of $\mc{H}_1$ and these induce transverse orientations on $\mc{H}_2$, and $\mc{K}$. It follows immediately from the construction that the transverse orientations are coherent on the v.p.-compressionbodies. We need only show that we cannot create closed flow lines. Since consolidation does not create closed flow lines, it suffices to show that $\mc{H}_1$ does not have any closed flow lines.

Suppose that $\alpha$ is a closed flow line for $\mc{H}_1$. It must  intersect $H_\pm$. As we have noted before, the (possibly disconnected) surface $H_\pm$ is obtained from $H$ by compressing along an sc-disc $D_\pm$. We can recover $H$ from $H_\pm$ by tubing (possibly along an arc component of $T\setminus \mc{H}_1$). We can isotope $\alpha$ to be disjoint from the tube, at which point it becomes a closed flow line for $\mc{H}$, a contradiction. Thus, $\mc{H}_1$, $\mc{H}_2$, and $\mc{H}_3$ are all oriented.
\end{proof}

\section{Complexity}\label{sec:complexity}

The theory of 3-manifolds is rife with various complexity functions on surfaces which guarantee certain processes terminate. In \cite {ST-thin}, Scharlemann and Thompson used a version of Euler characteristic as their measure of complexity to ensure that untelescoping (and consolidation) of Heegaard surfaces will eventually terminate. Since that foundational paper, similar complexities have been used by many authors, eg. \cites{HS01, Johnson}.  The requirement for a complexity is that it decreases under all possible types of compressions and any other moves that ``should" simplify the decomposition. In our context, we need a complexity that decreases under destabilizing a generalized stabilization, unperturbing, undoing a removable arc, and applying an elementary thinning sequence. The next example demonstrates some of the difficulties that arise in our context.

\subsection{An example}\label{Ex: Spherical Untelescoping}
Traditionally thin position in the style of Scharlemann-Thompson \cite{ST-thin} is done only for irreducible 3-manifolds. However, the following example (see Figure \ref{Fig: UnteleSpheres}) shows that, at an informal level, it should be possible to define a thin position for reducible 3-manifolds.

Let $P$ be the result of removing a regular neighborhood of two points from a 3-ball.  Choose one component of $\boundary P$ as $\boundary_+ P$ and the other two as $\boundary_- P$. Let $M$ be the result of gluing two copies of $P$ along $H = \boundary_+ P$. Then there is a certain sense in which the splitting of $M$ can be untelescoped to a simpler splitting, but the new thick surface appear more complicated. Figure \ref{Fig: UnteleSpheres} shows the original Heegaard surface and another, ostensibly thinner, multiple Heegaard surface. The surface on the right can be obtained from the one on the left by thinning using semi-compressing discs.

\begin{figure}[tbh]
\centering
\includegraphics[scale=0.4]{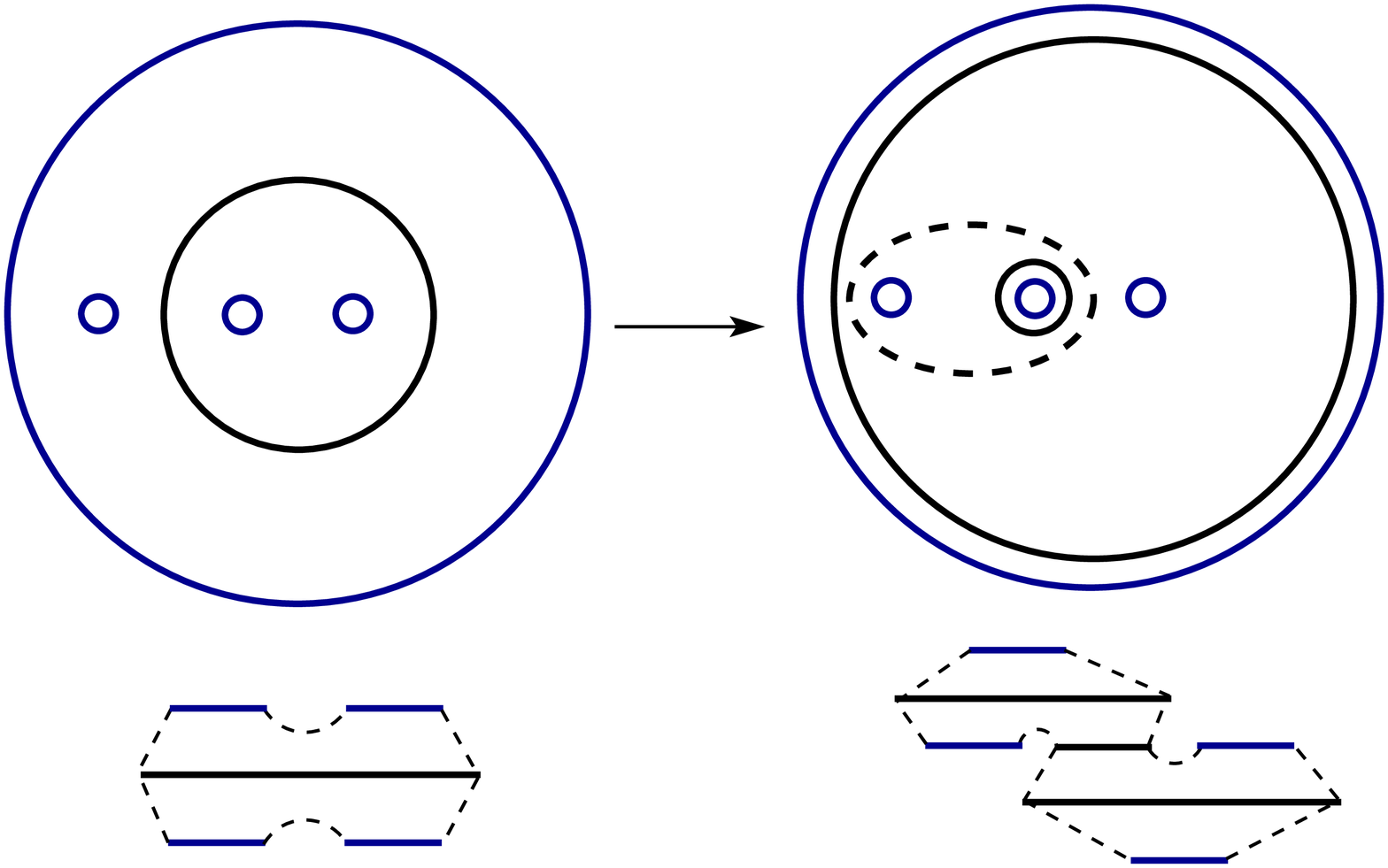}
%\scalebox{0.4}{\input{bridgeboundarydisc1.eps_t}}
\caption{On the left in black is the original Heegaard surface and on the right is a multiple Heegaard surface which ``should'', by all rights, be thinner. The thick surfaces are in solid black and the thin surface is in dashed black. Below each figure is a schematic representation with the boundary components in blue, the thick surfaces in long black lines, and the thin surface in a short black line.}
 \label{Fig:  UnteleSpheres}
\end{figure}

Although this example concerns a reducible manifold, we will run into similar problems when we have thin surfaces which are spheres twice-punctured by the graph $T$. If, in the example on the left, we add in a single ghost arc on each side of the Heegaard surface and four vertical arcs, one adjacent to each boundary component, we obtain an irreducible pair $(M,T)$ with a connected v.p.-bridge surface that can be thinned to the surface on the right using semi-cut discs. Observe that neither of the v.p.-compressionbodies in the example on the left contains a compressing disc or a cut disc and that neither is a trivial v.p.-compressionbody.

\subsection{Index of v.p.-compressionbodies}

We introduce the index of a v.p.-compressionbody (see below) as a first step is developing a useful complexity for oriented multiple v.p.-bridge surfaces. This index is a proxy for counting handles. The index of a compressionbody without an embedded graph was first defined by Scharlemann and Schultens \cite{Scharlemann-Schultens-JSJ}. 

\begin{definition}
For a v.p.-compressionbody  $(C, T_C)$ such that $T_C$ does not have interior vertices, define
\[
\mu(C, T_C) = 3(-\chi(\boundary_+ C) + \chi(\boundary_- C)) + 2(|\boundary_+ C \cap T| - |\boundary_- C \cap T|)+6.
\]
If $T_C$ does have interior vertices, drill them out and then calculate $\mu$. For convenience, define $\mu(\nil) = 0$.
\end{definition}

\begin{remark}
The +6 isn't strictly needed, but allows us to work with non-negative integers.
\end{remark}

Observe that $\mu(B^3, \nil) = 0$; $\mu(B^3, \text{ arc}) = 4$; and the index of any other trivial v.p.-compressionbody is 6. Since the euler characteristic of a closed surface is even, index is always even. The next lemma is proved by considering the effect of a $\boundary$-reduction on $\mu$. 

\begin{lemma}\label{lem:compressing}
Suppose that $(C, T_C)$ is a v.p.-compressionbody such that no component of $\boundary_- C$ is a sphere intersecting $T_C$ exactly once. If $D \subset (C, T_C)$ is an sc-disc for $\boundary_+ C$ and if $(C_1, T_1)$ and $(C_2, T_2)$ are the result of $\boundary$-reducing $(C, T_C)$ using $D$ (we allow $(C_2, T_2)$ to be empty) then
\begin{equation}\label{eq: boundary red}
\mu(C_1, T_1)+ \mu(C_2, T_2)= \mu(C, T_C) - 6 + 4|D \cap T_C|+6\delta
\end{equation}
where $\delta=1$ if $D$ is separating and 0 otherwise. Consequently, $\mu(C_1, T_1) < \mu(C, T)$. 

Furthermore, for any v.p.compressionbody $\mu(C, T_C) \geq 0$ with $\mu(C, T_C) = 0$ if and only if $(C, T_C)=(B^3, \nil)$; $\mu(C, T_C)=4$ if and only if $(C, T_C)=(B^3, arc)$; and in all other cases $\mu(C, T_C)\geq 6$.
\end{lemma}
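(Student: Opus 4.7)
The plan is to (i) verify equation \eqref{eq: boundary red} by direct computation of how each ingredient of $\mu$ transforms under $\boundary$-reduction along $D$, (ii) deduce the strict inequality $\mu(C_1,T_1) < \mu(C,T_C)$ as an algebraic consequence of the equation combined with the defining property of an sc-disc, and (iii) prove the non-negativity bound and characterization of small $\mu$ by evaluating $\mu$ directly on each trivial v.p.-compressionbody and then running a strong induction on the size of a complete collection of sc-discs.

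For (i), a standard surgery computation gives $\chi(\boundary_+ C_1)+\chi(\boundary_+ C_2) = \chi(\boundary_+ C)+2$ (compression along $D$ raises the Euler characteristic of the positive boundary by $2$), while $\chi(\boundary_-)$ is redistributed between the pieces without change. If $|D\cap T|=1$, the edge of $T$ meeting $D$ is severed, producing two new points of $\boundary_+\cap T$ (one on each parallel copy of $D$); so $|\boundary_+\cap T|$ increases by $2|D\cap T|$, while $|\boundary_-\cap T|$ is unchanged. The constant $+6$ appears once in $\mu(C,T_C)$ but $(1+\delta)$ times in the sum $\mu(C_1,T_1)+\mu(C_2,T_2)$. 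Collecting these contributions and substituting into the defining formula yields \eqref{eq: boundary red}.

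For (ii), rewrite the equation as $\mu(C_1,T_1)-\mu(C,T_C) = -\mu(C_2,T_2)-6+4|D\cap T|+6\delta$, using the convention $\mu(\nil)=0$. When $\delta=0$, this is $-6+4|D\cap T|\leq -2$. When $\delta=1$, the inequality reduces to $\mu(C_2,T_2) > 4|D\cap T|$; by the characterization established in the next step, violation would force $(C_2,T_2)=(B^3,\nil)$ when $|D\cap T|=0$, or $(C_2,T_2)=(B^3,\text{arc})$ when $|D\cap T|=1$. In the first case $D$ and the subdisc $\Sigma = \boundary C \cap C_2 \subset \boundary_+ C$ would cobound a ball disjoint from $T$; in the second, a ball containing a single $\boundary$-parallel arc with one endpoint on $\Sigma$ and one on $D$. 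Either configuration is expressly forbidden by the third clause of the sc-disc definition, contradicting our hypothesis on $D$. The symmetric statement for $\mu(C_2,T_2)$ is identical.

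For (iii), direct substitution into the formula yields $\mu(B^3,\nil)=0$, $\mu(B^3,\text{arc})=4$, $\mu(B^3,\text{tree with a $k$-valent interior vertex}) = 6$ after drilling out the vertex (the negative boundary picks up a $k$-punctured sphere, and the two intersection-count terms cancel), and $\mu(H\times I,\text{vertical arcs})=6$. For a non-trivial $(C,T_C)$, Lemma \ref{Lem: Invariance}(\ref{it: no sc}) guarantees a nonempty complete collection $\Delta$ of sc-discs, and I induct on $|\Delta|$ with the full characterization as the inductive hypothesis. Picking $D\in\Delta$ and applying the equation, the non-separating cases are immediate and the separating case with $|D\cap T|=0$ is handled using the same sc-disc argument as in (ii) to rule out either piece being $(B^3,\nil)$. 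The main obstacle is the separating case with $|D\cap T|=1$, where the equation allows $\mu$ to decrease by $4$ upon reassembly: here both reduced pieces have $T_i\neq\nil$ and hence $\mu(C_i,T_i)\geq 4$, and if both were exactly $(B^3,\text{arc})$ the reassembled $(C,T_C)$ would itself be $(B^3,\text{arc})$, trivial, contradicting the existence of the sc-disc $D$. Hence $\mu(C_1,T_1)+\mu(C_2,T_2)\geq 10$, giving $\mu(C,T_C)\geq 6$ and completing the induction.
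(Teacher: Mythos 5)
Your proposal is correct and follows essentially the same route as the paper: Equation \eqref{eq: boundary red} by direct bookkeeping of Euler characteristics and punctures under $\boundary$-reduction, an induction on the size of a complete collection of sc-discs for the lower bound and the characterization of $\mu\in\{0,4\}$, and the third clause of the sc-disc definition to exclude a piece being $(B^3,\nil)$ or $(B^3,\text{arc})$ in the separating cases. The only (harmless) differences are organizational --- you separate the characterization from the strict inequality, and in the separating once-punctured case you rule out both pieces being $(B^3,\text{arc})$ by observing the reassembly would be trivial, where the paper excludes each such piece directly via the sc-disc condition.
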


\begin{proof}
If $T_C$ has interior vertices, drill them out. Suppose first that $D \subset (C, T_C)$ is an sc-disc for $\boundary_+ C$. Recall that $|D \cap T_C| \in \{0,1\}$. Let $\Delta$ be a complete collection of sc-discs containing $D$. Let $(C', T') = (C_1, T_1) \cup (C_2, T_2)$. We prove the lemma by induction on $|\Delta|$. 

Let $(C', T')$ be the result of $\boundary$-reducing $(C, T_C)$ using $D$. Considering the effect of $\boundary$-reduction on euler characteristic and the number of punctures produces Equation \eqref{eq: boundary red}. Notice that $\Delta \setminus D$ is a complete collection of sc-discs for $(C', T')$.  

If $|\Delta| = 1$, then $(C', T')$ is the union of trivial v.p.-compressionbodies. If $\delta = 0$, then $(C, T_C)$ is either $(S^1 \times D^2, \nil)$ or $(S^1 \times D^2, \text{ core loop})$. In either case, $\mu(C, T_C) = 6$ and $\mu(C_1, T_1) = \mu(C', T')$ is either 0 or 4. Suppose $\delta = 1$. Since $D$ is an sc-disc, neither $(C_1, T_1)$ nor $(C_2, T_2)$ is $(B^3, \nil)$. Similarly, if $|D \cap T_C| = 1$, then neither can be $(B^3, \text{ arc})$. In particular, 
\[
\mu(C_1, T_1) = \mu(C, T_C) + 4|D \cap T_C| - \mu(C_2, T_2) < \mu(C, T_C).
\]

The proof of the inductive step is similar; we apply the inductive hypothesis to $(C_2, T_2)$ to conclude that $\mu(C_2, T_2) \geq 6$ when it is non-trivial.
\end{proof}

The next lemma considers the effect of consolidation on index. See Figure \ref{Fig: combining indices} for a diagram.

\begin{lemma}\label{lem:trivialconsolidation}
Suppose that $\mc{H}\in \vpH(M,T)$ is a multiple v.p.-compressionbody. Suppose $(A, T_A)$, $(P, T_P)$ and $(B, T_B)$ are v.p.-compressionbodies with $(P, T_P)$ a product v.p.-compressionbody such that $\bdd_-P  \subset \bdd_- A$ and $\bdd_+B = \bdd_+P$. Let $C=A \cup P \cup B$ and $T = T_A \cup T_P \cup T_B$. Then $\mu(C, T_C)=\mu(A, T_A)+\mu(B, T_B)-6$.
\end{lemma}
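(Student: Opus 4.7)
The proof plan is a direct bookkeeping computation from the definition of $\mu$, exploiting the fact that the contributions of the interior gluing surfaces cancel because $P$ is a product.

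First I would reduce to the case where $T_A$, $T_P$, $T_B$ have no interior vertices by drilling them out, since $\mu$ is defined after this preprocessing step and drilling commutes with the decomposition $C = A \cup P \cup B$. Next, I would identify the boundary decomposition of $(C,T)$. Because $\boundary_- P \subset \boundary_- A$ is glued to the negative boundary of $A$ and $\boundary_+ P = \boundary_+ B$ is glued between $P$ and $B$, the exposed boundary of $C$ is
\[
\boundary_+ C = \boundary_+ A \quad \text{and} \quad \boundary_- C = (\boundary_- A \setminus \boundary_- P) \sqcup \boundary_- B.
\]
Euler characteristics and intersection counts with $T$ are then additive over this (disjoint) decomposition.

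Substituting into
\[
\mu(C,T) = 3(-\chi(\boundary_+ C) + \chi(\boundary_- C)) + 2(|\boundary_+ C \cap T| - |\boundary_- C \cap T|) + 6
\]
and comparing with $\mu(A,T_A) + \mu(B,T_B)$, everything coming from $\boundary_+ A$, $\boundary_- A$, $\boundary_- B$ cancels, and one copy of the constant $6$ is lost, leaving a discrepancy of
\[
3(\chi(\boundary_+ B) - \chi(\boundary_- P)) + 2(|\boundary_- P \cap T_P| - |\boundary_+ B \cap T_B|) - 6.
\]

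Finally, I would use that $P$ is a trivial product compressionbody: $\boundary_+ P$ and $\boundary_- P$ are homeomorphic via the product structure, and $T_P$ is a union of vertical arcs, so $\chi(\boundary_+ P) = \chi(\boundary_- P)$ and $|\boundary_+ P \cap T_P| = |\boundary_- P \cap T_P|$. Combined with the gluing identifications $\boundary_+ B = \boundary_+ P$ (which match puncture counts since $T$ is properly embedded across the interface), the bracketed expression collapses to $-6$, yielding the stated identity. There is no serious obstacle; the only care required is consistent sign conventions in $\mu$ and correct tracking of which components of $\partial A$, $\partial B$, $\partial P$ are exposed versus absorbed into the interior of $C$.
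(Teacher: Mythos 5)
Your proposal is correct and follows essentially the same bookkeeping computation as the paper: identify $\boundary_+ C = \boundary_+ A$ and $\boundary_- C = (\boundary_- A \setminus \boundary_- P) \cup \boundary_- B$, plug into the definition of $\mu$, and use that a trivial product compressionbody has $\chi(\boundary_+ P) = \chi(\boundary_- P)$ and $|\boundary_+ P \cap T| = |\boundary_- P \cap T|$ so that the interface contributions cancel, leaving exactly $-6$. The preliminary remark about drilling out interior vertices is a harmless restatement of the definition of $\mu$ and doesn't change the argument.
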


\begin{proof}
By the definition of product v.p.-compressionbody, $-\chi(\boundary_+ P) = -\chi(\boundary_- P)$ and $|\boundary_+ P \cap T| = |\boundary_- P \cap T|$. Let $\alpha = \boundary_- A \setminus \boundary_- P$. Recall that $\boundary_+ C = \boundary_+ A$ and $\boundary_- C = \alpha \cup \boundary_- B$. We have:
\[\begin{array}{rcl}
\mu(A, T_A) + \mu(B, T_B) &=& 3(-\chi(\boundary_+ A) + \chi(\alpha)) + 2(|\boundary_+ A \cap T| - |\alpha \cap T|)  + 6 \\
&& + 3\chi(\boundary_- B) - 2|\boundary_- B \cap T| + 6 \\
&& + 3\chi(\boundary_- P) - 2|\boundary_- P \cap T| - 3\chi(\boundary_+ B) + 2|\boundary_+ B \cap T|\\
&=& \mu(C, T) + 6.
\end{array}
\]
\end{proof}

For a thick surface $H \subset \mc{H}^+$, let $\mu_\downarrow(H)=\mu(H_\downarrow)$ and $\mu_\uparrow(H)=\mu(H_\uparrow)$.  We now define the oriented indices $I_\up(\mc{H})$ and $I_\down(\mc{H})$. These will contribute to a complexity which decreases under all relevant moves. Informally, for each thick surface we calculate the sum of the number of ``handles'' which are immediately above some thick surface which is either $H$ or above $H$ and the number of ``handles'' which are immediately below some thick surface which is either equal to $H$ or below $H$. We place these numbers into a non-increasing sequence and compare the results lexicographically. Instead of working with ``handles'', however, we use the indices of v.p.-compressionbodies.

\begin{definition}
Let $\mc{H} \in \vpoH(M,T)$. Each v.p.-compressionbody $(C, T_C) \subset (M,T) \setminus \mc{H}$ is adjacent to a single thick surface $H \subset \mc{H}^+$. The transverse orientation on $H$ either points into or out of $C$. If it points into $C$, then $(C, T_C) = H_\up$ and if it points out of $C$, then $(C, T_C) = H_\down$. In the former case we say that $(C, T_C)$ is an \defn{upper} v.p.-compressionbody for $\mc{H}$ and we say it is a \defn{lower} v.p.-compressionbody in the latter case. 

Consider the set of flow lines beginning at $H$. A v.p.-compressionbody component (other than $H_\down$) of $(M,T)\setminus \mc{H}$ intersecting one of these flow lines is said to be \defn{above} $H$. We say that a v.p.-compressionbody is \defn{below} $H$ if reversing the transverse orientation of $\mc{H}$ makes it above $H$. Define ${\mc{H}^H_\up}$ to be the set of all upper compression bodies $J_\up$ above $H$. Define ${\mc{H}^H_\down}$ to be the set of all lower compression bodies $J_\down$ below $H$. Since there are no closed flow lines, the sets $\mc{H}^H_\up$ and $\mc{H}^H_\down$ are disjoint.

Define the \defn{upper index} and \defn{lower index} of $H$ to be (respectively):
\[\begin{array}{rcl}
 I_{\up}(H)&=& 6 - 6|\mc{H}^H_\up| + \sum\limits_{J_\up \in \mc{H}^H_\up} \mu_\up(J) \\
I_{\down}(H)&=& 6 - 6| \mc{H}^H_\down|+ \sum\limits_{K_\down \in \mc{H}^H_\down}\mu_\down(K).
\end{array}
\]
\end{definition}

In Lemma \ref{lem:bounded below} below, we verify that both $I_\up(H)$ and $I_\down(H)$ are non-negative.

To package the indices for thick surfaces into an invariant for $\mc{H}$, let $\oc(\mc{H})$, the \defn{oriented complexity} of $\mc{H}$, be the \emph{non-increasing} sequence whose terms are the quantities $I(H)=I_\up(H) + I_\down(H)$ for each thick surface $H \subset \mc{H}^+$. 

\subsubsection{Oriented complexity decreases under generalized destabilization, unperturbing, and undoing a removable arc}
\begin{lemma}\label{lem:I decreases under gen destab}
Assume that no component of $\boundary M$ is a sphere intersecting $T$ in two or fewer points. Suppose that $\mc{K}$ is obtained from $\mc{H}$ by a generalized destabilization, unperturbing, or undoing a removable arc. Then $\oc(\mc{K}) < \oc(\mc{H})$.
\end{lemma}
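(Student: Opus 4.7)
The plan is to argue that each of the three operations replaces a single thick surface $H \in \mc{H}^+$ with a new thick surface $H'$, leaves $\mc{H}^-$ unchanged, and preserves the above/below flow-line structure among the remaining thick surfaces. From this I will deduce $I(H') < I(H)$ strictly and $I_{\mc{K}}(J) \leq I_{\mc{H}}(J)$ for every other $J \in \mc{K}^+$. A routine argument on non-increasingly sorted multisets of integers then yields $\oc(\mc{K}) < \oc(\mc{H})$.

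First I would verify the structural claim. Each operation is supported in a neighborhood of a small collection of discs disjoint from all other components of $\mc{H}$: a (pair of) sc-disc(s) for generalized destabilization, a perturbing pair of bridge discs for unperturbing, or a removing pair for undoing a removable arc. Because $H'$ sits geometrically near $H$ and inherits its transverse orientation, the above/below structure among the remaining thick surfaces is unchanged, so for each $J \in \mc{H}^+ \setminus \{H\}$ the set $\mc{K}^{J}_{\up}$ is obtained from $\mc{H}^{J}_{\up}$ by replacing $H_\up$ with $H'_\up$ when $H_\up$ is present (and analogously for $\mc{K}^{J}_{\down}$). Consequently, $I_{\mc{K}}(J) - I_{\mc{H}}(J)$ is a sum of at most two terms, each of the form $\mu(H'_\up) - \mu(H_\up)$ or $\mu(H'_\down) - \mu(H_\down)$.

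Next I would compute those two differences in each case. For ordinary stabilization $D$ is a non-separating compressing disc; Lemma \ref{lem:compressing} gives $\mu(H'_\up) = \mu(H_\up) - 6$, and a direct calculation using $\chi(H') = \chi(H) + 2$ and $|H' \cap T| = |H \cap T|$ yields $\mu(H'_\down) = \mu(H_\down) - 6$. Meridional stabilization is the analogous computation with $|D \cap T| = 1$, giving $-2$ on each side. For any $\boundary$-stabilization variant along a component $S$ of $\boundary M$ or of a drilled manifold, the separating sc-disc $D$ cuts off from $H_\up$ a trivial product compressionbody with $\boundary_+$ parallel to $S$; combining Lemma \ref{lem:compressing} on the $\up$ side (the discarded product has index $6$) with a direct expansion on the $\down$ side using $\chi(H') = \chi(H) + 2 - \chi(S)$ and $|H' \cap T| = |H \cap T| + 2|D \cap T| - |S \cap T|$ yields
\[
[\mu(H'_\up) - \mu(H_\up)] + [\mu(H'_\down) - \mu(H_\down)] = 8|D \cap T| + 6\chi(S) - 4|S \cap T| - 12.
\]
A short case check on $S$ shows this quantity is strictly negative precisely when $S$ is not a sphere intersecting $T$ in at most two points, which is guaranteed by the hypothesis on $\boundary M$ in the standard case and by the no-low-valence-vertex hypothesis on $T$ in the ghost case. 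For unperturbing and for undoing a removable arc, $H'$ is topologically the same closed surface as $H$ but $|H' \cap T| = |H \cap T| - 2$; a direct calculation gives $-4$ on each side, hence $I(H') - I(H) = -8$.

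In every case $\mu(H'_\up) \leq \mu(H_\up)$, $\mu(H'_\down) \leq \mu(H_\down)$, and $I(H') < I(H)$ strictly. So $I_{\mc{K}}(J') \leq I_{\mc{H}}(J)$ for every pairing of $J \in \mc{H}^+$ with its corresponding $J' \in \mc{K}^+$, with at least one strict inequality. A standard argument shows that such a termwise inequality of multisets of integers (with at least one strict entry) implies strict lexicographic comparison of the non-increasingly sorted sequences, so $\oc(\mc{K}) < \oc(\mc{H})$. The main obstacle is the $\boundary$-stabilization cluster, where the index change depends on the topology of $S$ and the hypothesis on $\boundary M$ is used sharply: the excluded cases of a boundary sphere intersecting $T$ in at most two points are exactly those in which the displayed expression fails to be negative.
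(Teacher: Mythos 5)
Your proof is correct and follows essentially the same route as the paper: a case-by-case computation of the changes in $\mu_\up$ and $\mu_\down$ for the affected thick surface (with the $\boundary$-stabilization variants handled by tracking where $S$ and the discarded product region move), monotonicity of $I(J)$ for the remaining thick surfaces, and a strict drop in $I(H)$, with the hypothesis on $\boundary M$ (plus the no-valence-$\le 2$ condition in the ghost case) used precisely to exclude spherical components of $S$ meeting $T$ fewer than three times. The one point to make explicit is that for the other thick surfaces $J$ you need the \emph{individual} inequalities $\mu(H'_\up)\le\mu(H_\up)$ and $\mu(H'_\down)\le\mu(H_\down)$, not just the displayed sum; you do assert these, and they follow from the same case check on the spherical components of $S$.
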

\begin{proof}
Let $H \subset \mc{H}^+$ be the thick surface to which we apply the generalized destabilization, unperturbing, or undoing a removable arc. Let $H'$ be the new thick surface, so that $\mc{K} = (\mc{H} \setminus H) \cup H'$. 

Suppose first that we are performing a destabilization or meridional destabilization. In this case, $-\chi(H') = -\chi(H) - 2$ and $|H'\cap T|$ is either $|H \cap T|$ or $|H \cap T| + 2$. Thus, $\mu_\down(H') < \mu_\down(H)$ and $\mu_\up(H') < \mu_\up(H)$. It follows that $I(H') < I(H)$ and for every thick surface $J \subset \mc{H}^+\setminus H$, the index $I(J)$ does not increase under the destabilization or meridional destabilization. 

The cases when we unperturb or undo a removable arc are very similar: we simply use the fact that $|H' \cap T| = |H \cap T| - 2$. 

Now suppose that we perform a $\boundary$-destabilization, meridional $\boundary$-destabilization, ghost $\boundary$-destabilization, or meridional ghost $\boundary$-destabilization. Since indices are calculated by drilling out the interior vertices of $T$, we may assume that there are none.  In all these cases, there is a (possibly disconnected) closed subsurface  $S \subset \boundary M$ and a (possibly empty) subset $\Gamma$ of edges of $T$ each disjoint from $\mc{H}$. These are such that $H'$ is the result of compressing $H$ along a separating sc-disc $D$ and then discarding a component $H''$ which is the frontier of the regular neighborhood of $S \cup \Gamma$. In particular, the euler characteristic of the discarded component is $\chi(S) - 2|\Gamma|$.

We claim that $\mu_\down(H') < \mu_\down(H)$ and $\mu_\up(H') < \mu_\up(H)$. Let $p = |D \cap T|$. Observe that $|H' \cap T| = |H \cap T| - |S \cap T| + p$. (If $p = 1$, this follows from the fact that $D$ is separating.) Also note that $-\chi(H') = -\chi(H) + \chi(S) - 2|\Gamma| - 2$. Hence,
\[
-3\chi(H') + 2|H' \cap T| = -3\chi(H)  + 2|H \cap T|  + 3 \chi(S) - 6|\Gamma| - 2|S \cap T| + 2p - 6.
\]
Without loss of generality, we may assume that $S \cup \Gamma \subset H_\up$. The (ghost) (meridional) $\boundary$-stabilization then moves $S \cup \Gamma$ to the lower compressionbody $H'_\down$. That is, $S \cup \Gamma \subset H'_\down$. In particular, $\boundary_- H'_\up = \boundary_- H_\up \setminus S$ and $\boundary_- H'_\down = \boundary_- H_\down \cup S$. Thus, we have:
\[
\begin{array}{rcl}
\mu_\up(H') &=& \mu_\up(H) + (3\chi(S) - 6|\Gamma| - 2|S \cap T| + 2p - 6) + (- 3\chi(S) + 2|S \cap T|) \\
\mu_\down(H') &=& \mu_\down(H) + (3\chi(S) - 6|\Gamma| - 2|S \cap T| + 2p - 6) +  (3\chi(S) - 2|S \cap T|)\\
\end{array}
\]
The first term in parentheses in each equation comes from the change of $H$ to $H'$ and the second term comes from the movement of $S \cup \Gamma$ from $\boundary_- H_\up$ to $H'_\down$.

Simplifying, and using the fact that $2p \in \{0,2\}$, we obtain:
\[
\begin{array}{rcl}
\mu_\up(H') &\leq & \mu_\up(H)  - 6|\Gamma| - 4  \\
\mu_\down(H') &\leq& \mu_\down(H) - 6|\Gamma| + 6 \chi(S) - 4|S \cap T|  - 4 \\
\end{array}
\]
In particular, $\mu_\up(H') < \mu_\up(H)$. The situation for $\mu_\down$ requires more analysis. Let $S_0 \subset S$ be the subset which is the union of all spherical components of $S$ and let $S_1 = S\setminus S_0$. We have
\[
\mu_\down(H') \leq \mu_\down(H) - 6|\Gamma| + 12|S_0| - 4|S_0 \cap T|  - 4 \\
\]
By assumption, each component of $S_0$ intersects $T$ at least three times, so $4|S_0 \cap T| \geq 12|S_0|$. Thus, $\mu_\down(H') < \mu_\down(H)$.

Since $S \subset \boundary M$ and does not belong to $\mc{H}'$, we can conclude that for each thick surface $J \subset \mc{H}\setminus H$, the indices $I_\up(J)$ and $I_\down(J)$ do not increase under the (meridional) (ghost) $\boundary$-destabilization. Furthermore, since $I_\up(H') + I_\down(H') < I_\up(H) + I_\down(H)$, we have
\[
\oc(\mc{K}) < \oc(\mc{H}),
\]
as desired.
\end{proof}

\subsubsection{Oriented complexity decreases under consolidation}
\begin{lemma}\label{lem:consolidation leaves I}
Suppose that $\mc{K} \in \vpoH(M,T)$ is obtained from $\mc{H}\in \vpoH(M,T)$ by consolidating a thick surface $H \subset \mc{H}^+$  with a thin surface $Q \subset \mc{H}^-$. Then $\oc(\mc{K}) < \oc(\mc{H})$.
\end{lemma}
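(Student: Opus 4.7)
My plan is to show that consolidating $(P, T_P)$ removes the thick surface $H$ from $\mc{H}^+$ while leaving every other index $I(K)$ unchanged, at which point a lexicographic argument will finish the proof. Let $(P, T_P)$ be the product compressionbody being consolidated, with $\boundary_+ P = H$ and $\boundary_- P = Q$. Let $B$ be the other v.p.-compressionbody adjacent to $H$ and $A$ the other adjacent to $Q$. After possibly reversing the transverse orientation of $\mc{H}$ I may assume $P = H_\up$ and $B = H_\down$. The orientation convention applied to $P$ forces $Q$ to be oriented out of $P$ and hence into $A$, and the same convention applied to $A$ forces $J := \boundary_+ A$ to be oriented out of $A$, so $A = J_\down$. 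The flow line $H \to P \to Q \to A \to J$ shows that $J$ is above $H$, and after consolidation the three v.p.-compressionbodies $A$, $P$, $B$ are replaced by the single v.p.-compressionbody $C = A \cup P \cup B$, which plays the role of $J_\down$ in $\mc{K}$.

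The numerical engine of the proof consists of the two identities $\mu(P) = 6$ (immediate from the definition since $P$ is a product) and $\mu(C) = \mu(A) + \mu(B) - 6$ (Lemma \ref{lem:trivialconsolidation}); combined with the $-6$ coefficient on $|\cdot|$ in the definitions of $I_\up$ and $I_\down$, these will produce a perfect cancellation in each of three cases. If $K \in \mc{K}^+$ is above $H$, then since every flow line leaving $H$ is forced through $P, Q, A, J$, either $K = J$ or $K$ is above $J$; consequently $A, B \in \mc{H}^K_\down$ while $P$, being upper and below $K$, is in neither $\mc{H}^K_\up$ nor $\mc{H}^K_\down$. Consolidation yields $\mc{K}^K_\down = (\mc{H}^K_\down \setminus \{A, B\}) \cup \{C\}$ and $\mc{K}^K_\up = \mc{H}^K_\up$, and the drop of one element together with $\mu(C) - \mu(A) - \mu(B) = -6$ cancel exactly in $I_\down(K)$. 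If $K$ is below $H$, a symmetric argument places $P$ in $\mc{H}^K_\up$ (with $A, B, C$ all lower and absent from the upper sets), whence $\mc{K}^K_\up = \mc{H}^K_\up \setminus \{P\}$ and the drop of one element balances $-\mu(P) = -6$. If $K$ is unrelated to $H$ by flow, none of $A, B, P, C$ appear in the bookkeeping and nothing changes.

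Hence $\oc(\mc{K})$ is obtained from the non-increasing sequence $\oc(\mc{H})$ by deleting the single entry $I(H)$, and deleting any entry from a non-increasing sequence produces a lexicographically strictly smaller non-increasing sequence; thus $\oc(\mc{K}) < \oc(\mc{H})$. The one step that needs real care is the flow-theoretic claim that every thick surface above $H$ lies at or above $J$: this uniqueness of ascent through $P$ is what concentrates the bookkeeping changes into the single pair $\{A, B\}$, which the identity $\mu(C) = \mu(A) + \mu(B) - 6$ is purpose-built to absorb.
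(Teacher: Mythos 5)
Your proof is correct and follows essentially the same route as the paper's: reduce to one orientation of the product region, invoke Lemma \ref{lem:trivialconsolidation} so that merging the three compressionbodies drops the relevant $\mu$-sum by exactly $6$ while the count $|\cdot|$ drops by $1$, conclude every index $I(K)$ for $K \neq H$ is unchanged, and finish by deleting the entry $I(H)$ from the non-increasing sequence. Your case analysis (above / below / flow-unrelated) and the tracking of which of $A$, $B$, $P$, $C$ lie in which index sets is just a slightly more explicit version of the paper's argument.
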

\begin{proof}
Without loss of generality, we may suppose that $Q \subset \boundary_- (H_\down)$. (If not, reverse orientations so that above and below are interchanged.) That is, $H_\down$ is the product compressionbody bounded by $H$ and $Q$. Let $C \neq H_\down$ be the other v.p.-compressionbody such that $Q \subset \boundary_- C$. Let $J \subset \mc{H}^+\setminus H$ be another thick surface. We will show that $I_\up(J)$ and $I_\down(J)$ are unchanged by the consolidation.

The v.p.-compressionbodies of $(M,T) \setminus \mc{K}$ are obtained from those of $(M,T) \setminus \mc{H}$ by replacing $C$, $H_\down$, and $H_\up$ with their union. By Lemma \ref{lem:trivialconsolidation}, we have
\begin{equation}\label{muconsolthin}
\mu(C \cup H_\down \cup H_\up) = \mu(C) + \mu(H_\up) - 6.
\end{equation}
The consolidation does not affect flow lines, and so if there is no flow line from $J$ to $H$ or from $H$ to $J$, then $I_\up(J)$ and $I_\down(J)$ are clearly unaffected. 

If there is a flow line from $J$ to $H$, then Equation \eqref{muconsolthin} implies that $I_\up(J)$ decreases by 6. But we also have $|\mc{K}^J_\up| = |\mc{H}^J_\up| - 1$, and so $I_\up(J)$ also increases by 6. Thus, $I_\up(J)$ is unchanged by the consolidation. Clearly, $I_\down(J)$ is also unchanged by the consolidation, because the consolidation happens above $J$.

If there is a flow line from $H$ to $J$, then clearly $I_\up(J)$ is unchanged by the consolidation. On the other hand, $I_\down(J)$ decreases by 6 because we have removed $\mu(H_\down)$ from the sum. However, $I_\down(J)$ also increases by 6 since $|\mc{K}^J_\down| = |\mc{H}^J_\down| - 1$. Thus, $I_\down(J)$ is also unchanged by the consolidation.

Thus, $\oc(\mc{K})$ is simply obtained from $\oc(\mc{H})$ by removing the term $I_\up(H) + I_\down(H)$. Since the sequence was non-increasing, we have $\oc(\mc{K}) < \oc(\mc{H})$.
\end{proof}

\subsubsection{Oriented complexity decreases under an elementary thinning sequence}

Suppose that $\mc{H}$, $\mc{H}_1$, $\mc{H}_2$, $\mc{H}_3 = \mc{K}$ are the multiple v.p.-bridge surfaces in an elementary thinning sequence obtained by untelescoping a thick surface $H \subset \mc{H}^+$ using sc-discs $D_-$ and $D_+$. As we've done before, let $H_-$ and $H_+$ be the new thick surfaces and $F$ the new thin surface. We will generally work with $\mc{H}_2$ and $\mc{H}_3$ (rather than $\mc{H}_1$) so $H_\pm$ is obtained from $H$ by compressing along $D_\pm$ and discarding a component if $\delta_\pm = 1$. The surface $F$ is then obtained from $H_\pm$ by compressing along $D_\mp$ and possibly discarding a component.

\begin{lemma} \label{lem:mu goes down}
The following hold for $H_-, H_+ \subset \mc{H}^+_2$.
\begin{enumerate}
\item $\mu_\downarrow(H_-) < \mu_\downarrow(H)$
\item $\mu_\uparrow(H_+)< \mu_\uparrow(H)$
\item $\mu_\downarrow(H_-) + \mu_\downarrow(H_+) = \mu_\downarrow(H)+6$
\item $\mu_\uparrow(H_-)+\mu_\uparrow(H_+) =  \mu_\uparrow(H)+6$.
\end{enumerate}
\end{lemma}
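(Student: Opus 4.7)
The plan is to combine the explicit formula for $\mu$ with Lemma \ref{lem:compressing} (behavior under $\boundary$-reduction) and Lemma \ref{lem:trivialconsolidation} (behavior under consolidation), working first in $\mc{H}_1$ (immediately after untelescoping) and then tracking the passage to $\mc{H}_2$ via consolidation.

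In $\mc{H}_1$, the compressionbody $(H_-)_\downarrow$ is exactly the $\boundary$-reduction of $H_\downarrow$ along $D_-$, so Lemma \ref{lem:compressing} gives $\mu((H_-)_\downarrow) = \mu_\downarrow(H) - 6 + 4|D_-\cap T| + 6\delta_-$, where $\delta_- = 1$ if $D_-$ separates $H_\downarrow$ and $0$ otherwise. Meanwhile $(H_+)_\downarrow$ has positive boundary $H_+$ and negative boundary $F$; using the identities $\chi(H_\pm) = \chi(H)+2$, $\chi(F) = \chi(H)+4$, $|H_\pm \cap T| = |H\cap T| + 2|D_\pm\cap T|$, and $|F \cap T| = |H\cap T| + 2|D_-\cap T| + 2|D_+\cap T|$, I would compute $\mu((H_+)_\downarrow)$ in $\mc{H}_1$ directly from the defining formula for $\mu$.

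To go from $\mc{H}_1$ to $\mc{H}_2$, I would track the consolidations of trivial product compressionbodies. By Lemma \ref{lem:Sep weak reduction}, these consolidations exist exactly when one of $D_\pm$ is separating, and each such consolidation pairs a non-doubly-spotted component of $F$ with a component of $H_\pm$. Each one applies Lemma \ref{lem:trivialconsolidation} to merge three v.p.-compressionbodies, either absorbing a component of $(H_-)_\downarrow$ into $(H_+)_\downarrow$ (for a $D_-$-adjacent consolidation) or deleting a trivial product component of $(H_+)_\downarrow$ (for a $D_+$-adjacent one). Summing the $-6$ contributions from these consolidations against the $\mc{H}_1$ sum produces the claimed identity $\mu_\downarrow(H_-) + \mu_\downarrow(H_+) = \mu_\downarrow(H) + 6$ in $\mc{H}_2$. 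Claim (4) follows by symmetrically swapping the roles of $\uparrow/\downarrow$ and of $D_-/D_+$.

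For claims (1) and (2), in $\mc{H}_2$ the surface $H_-$ consists only of doubly-spotted components surviving the consolidation (by Lemma \ref{lem: Controlling Product Regions}), and the corresponding $(H_-)_\downarrow$ in $\mc{H}_2$ is a single piece of the $\boundary$-reduction of $H_\downarrow$ by $D_-$. The strict inequality in the last clause of Lemma \ref{lem:compressing} then immediately gives $\mu_\downarrow(H_-) < \mu_\downarrow(H)$; claim (2) is symmetric. The main obstacle will be the careful bookkeeping in the separating case, where I must distinguish between ``$D_\pm$ separating in the compressionbody'' (controlling the number of $\boundary$-reduction pieces) and ``$\boundary D_\pm$ separating on $H$'' (controlling the number of components of $H_\pm$), and invoke Lemmas \ref{lem:Sep weak reduction} and \ref{lem: Controlling Product Regions} to identify precisely which pieces consolidate and which persist into $\mc{H}_2$.
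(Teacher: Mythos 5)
Your proposal follows essentially the same route as the paper's proof: Lemma \ref{lem:compressing} gives $\mu_\down(H_-)$ plus the index of the piece split off when $D_-$ separates, the region between $H_+$ and $F$ contributes $12-4p_-$, and the separating case is reconciled by tracking the consolidations from $\mc{H}_1$ to $\mc{H}_2$ via Lemmas \ref{lem:Sep weak reduction} and \ref{lem:trivialconsolidation}, with (1), (2) immediate from the strict inequality in Lemma \ref{lem:compressing} and (4) by reversing orientation. The bookkeeping subtlety you flag at the end is exactly the one the paper handles by introducing the auxiliary compressionbody $R_\down$, so your plan is sound.
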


\begin{proof}
Claims (1) and (2) follow immediately from Lemma \ref{lem:compressing}. Claim (4) can be obtained from the proof of Claim (3) by interchanging $+$ and $-$ and $\up$ and $\down$. We prove Claim (3).

Let $p_- = |D_- \cap T|$. Let $\delta_- = 1$, if $\boundary D_-$ separates $H$ and 0 otherwise.  If $\delta_- = 1$, let $R_\down$ be the v.p.-compressionbody such that $(H_-)_\down \cup R_\down$ is the result of $\boundary$-reducing $H_\down$ using $D_-$. See Figure \ref{fig:Rregion}.  If $\delta_- = 0$, then let $R_\up = \nil$ and recall that $\mu(R_\up) = 0$, by convention. 

\begin{figure}[ht]
\labellist
\small\hair 2pt
\pinlabel {$D_+$} [bl] at 52 111
\pinlabel {$D_-$} [tr] at 154 40
\pinlabel {$(H_-)_\down$} at 429 28
\pinlabel {$R_\down$} at 618 28
\pinlabel {$(H_+)_\down$} at 618 96
\endlabellist
\includegraphics[scale = 0.5]{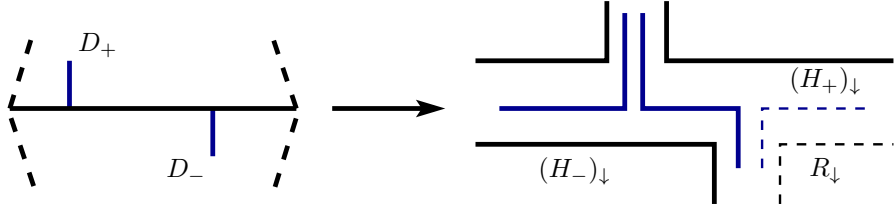}
\caption{The region between the thin surface and thick surface, both indicated with dashed lines, is consolidated in the passage from $\mc{H}_1$ to $\mc{H}_2$ when $\boundary D_-$ separates $H$. The v.p.-compressionbody $R_\down$ is then a subset of $(H_+)_\down$. }
\label{fig:Rregion}
\end{figure}

From the definition of index (see Lemma \ref{lem:compressing}) we have
\[
\mu_\down(H_-) + \mu(R_\down) = \mu_\down(H) - 6 + 4p_-  + 6\delta_-.
\]

If $\delta_- = 0$, notice that $\mu_\down(H_+) = 12 - 4p_-$, since a single compression creates $F$ from $H_+$. If $\delta_- = 1$, then before the consolidation that creates $\mc{H}_2$ from $\mc{H}_1$, the index of $(H_+)_\down$ is again $12 - 4p_-$. The consolidation removes a surface parallel to $\boundary_+ R_\up$ from the negative boundary of the v.p.-compressionbody and replaces it with $\boundary_- R_\up$. Recalling the additional (+6) term in the definition of $\mu$ we have, in either case,
\[
\mu_\down(H_+) = 12 - 4p_- + \mu(R_\down) - 6\delta_-
\]

Thus,
\[
\mu_\down(H_-) + \mu_\down(H_+) = \mu_\down(H) + 6.
\]
\end{proof}

\begin{corollary}\label{corollary:I decreases}
The following hold for $H_-, H_+ \subset \mc{H}^+_2$:
\begin{enumerate}
\item $I_\down(H_-)<I_\down(H)$
\item $I_\up(H_-) =  I_\up(H)$
\item $I_\down(H_+) = I_\down(H)$
\item $I_\up(H_+) <  I_\up(H)$
\end{enumerate}
\end{corollary}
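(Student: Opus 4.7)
The plan is to compute each of the four quantities directly from the definitions of $I_\up$ and $I_\down$, compare them to $I_\up(H)$ and $I_\down(H)$ term by term, and then invoke the appropriate part of Lemma~\ref{lem:mu goes down}.

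The key structural observation is that the elementary thinning sequence modifies the above/below structure only locally around $H$. Let $\mc{S} = \mc{H}^H_\up \setminus \{H_\up\}$ denote the upper compressionbodies strictly above $H$ in $\mc{H}$, and let $\mc{T} = \mc{H}^H_\down \setminus \{H_\down\}$ be the analogous lower set. Because $D_-$ and $D_+$ are disjoint from $\mc{H}^-$ and from the rest of $\mc{H}^+$, the sets $\mc{S}$ and $\mc{T}$ persist unaltered into $\mc{H}_2$, both as collections and in their individual indices. Tracing flow lines in $\mc{H}_2$ starting at $H_-$ or $H_+$, and using the orientation convention that the doubly-spotted $F$ is oriented so that flow passes from $(H_-)_\up$ across $F$ into $(H_+)_\down$, one verifies
\begin{align*}
(\mc{H}_2)^{H_-}_\down &= \{(H_-)_\down\} \cup \mc{T}, & (\mc{H}_2)^{H_-}_\up &= \{(H_-)_\up, (H_+)_\up\} \cup \mc{S}, \\
(\mc{H}_2)^{H_+}_\down &= \{(H_+)_\down, (H_-)_\down\} \cup \mc{T}, & (\mc{H}_2)^{H_+}_\up &= \{(H_+)_\up\} \cup \mc{S}.
\end{align*}
The asymmetry comes from the fact that $(H_-)_\up$ and $(H_+)_\down$ both lie between $H_-$ and $H_+$, but are counted only in the upper or lower sums, respectively.

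Substituting these descriptions into $I_\star(K) = 6 - 6|(\mc{H}_2)^K_\star| + \sum \mu_\star$ and cancelling against the corresponding expressions for $I_\up(H)$ and $I_\down(H)$ yields
\[
I_\down(H_-) = I_\down(H) + \mu_\down(H_-) - \mu_\down(H), \qquad I_\up(H_+) = I_\up(H) + \mu_\up(H_+) - \mu_\up(H),
\]
\[
I_\up(H_-) = I_\up(H) - 6 + \mu_\up(H_-) + \mu_\up(H_+) - \mu_\up(H),
\]
\[
I_\down(H_+) = I_\down(H) - 6 + \mu_\down(H_-) + \mu_\down(H_+) - \mu_\down(H).
\]
Parts (1)--(4) of Lemma~\ref{lem:mu goes down} then directly deliver claims (1), (4), (3), and (2) of the corollary, respectively.

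The main obstacle is the bookkeeping of flow sets under the consolidations passing from $\mc{H}_1$ to $\mc{H}_2$: one must check that consolidating away product regions---both the non-doubly-spotted components of $F$ and, when $\boundary D_-$ separates $H$, the region $R_\down$ appearing in the proof of Lemma~\ref{lem:mu goes down}---does not alter which upper and lower compressionbodies are reached by flow lines from $H_\pm$, nor the indices of those compressionbodies. Both points follow from the observation used in Lemma~\ref{lem:consolidation leaves I} that consolidation does not change flow lines and respects indices via Lemma~\ref{lem:trivialconsolidation}. Once these identifications are in hand, the corollary is purely arithmetic.
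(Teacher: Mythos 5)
Your proposal is correct and follows essentially the same route as the paper: the paper likewise observes that $(\mc{H}_2)^{H_-}_\up$ is $\mc{H}^H_\up$ with $H_\up$ replaced by $(H_-)_\up$ and $(H_+)_\up$ while $(\mc{H}_2)^{H_+}_\up$ only swaps in $(H_+)_\up$, and then applies Lemma \ref{lem:mu goes down} exactly as you do. The only cosmetic difference is that the paper proves (2) and (4) and obtains (1) and (3) by reversing the orientation on $\mc{H}$, whereas you write out all four flow-set identifications explicitly.
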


\begin{proof}
We prove (2) and (4). Conclusions (1) and (3) follow by reversing the orientation on $\mc{H}$.

Observe that each flow line beginning at $H$ extends to a flow line beginning at $H_-$ and can be restricted to a flow line beginning at $H_+$. Thus, the set $(\mc{H}_2)^{H_-}_\up$ is obtained from the set $\mc{H}^H_\up$ by removing the v.p.-compressionbody $H_\up$ and replacing it with the v.p.-compressionbodies $(H_-)_\up$ and $(H_+)_\up$. Observe that $|(\mc{H}_2)^{H_-}_\up| = |\mc{H}^H_\up| + 1$. Hence, using Lemma \ref{lem:mu goes down} part (4),
\[\begin{array}{rcl}
I_\up(H_-) &=& I_\up(H) - \mu_\up(H) + \mu_\up(H_-) + \mu_\up(H_+) - 6 \\
&=& I_\up(H) + 6  - 6 \\
&=& I_\up(H).
\end{array}.
\]
This proves Conclusion (2).

On the other hand, $(\mc{H}_2)^{H_+}_\up$ is obtained from $\mc{H}^H_\up$ by removing $H_\up$ and replacing it with $(H_+)_\up$. From Lemma \ref{lem:mu goes down}, part (3) we have $\mu_\up(H_+) < \mu_\up(H)$. Hence, $I_\up(H_+) < I_\up(H)$, proving Conclusion (4).
\end{proof}

\begin{corollary}\label{cor:elem thinning decreases I}
If $\mc{K}$ is obtained from $\mc{H}$ by an elementary thinning sequence, then $\oc(\mc{K}) < \oc(\mc{H})$.
\end{corollary}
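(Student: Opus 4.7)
The plan is to decompose the proof into a comparison $\oc(\mc{H}) > \oc(\mc{H}_2)$ followed by $\oc(\mc{H}_2) \geq \oc(\mc{H}_3) = \oc(\mc{K})$, leveraging Corollary \ref{corollary:I decreases} for the new thick surfaces $H_\pm$ and Lemma \ref{lem:consolidation leaves I} for the consolidation stages. Recall that $\oc(\mc{H})$ is a non-increasing sequence whose terms are $I(J) = I_\up(J) + I_\down(J)$ as $J$ ranges over $\mc{H}^+$, and that the comparison is lexicographic; so it suffices to describe precisely how the multiset of indices $\{I(J) : J \in \mc{H}^+\}$ changes under an elementary thinning sequence.

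For the main step $\mc{H} \to \mc{H}_2$, the thick surface $H$ is replaced by the two new thick surfaces $H_-$ and $H_+$ (their doubly spotted components, after consolidation of any parallel components produced by a separating $D_\pm$), while every other thick surface $J \in \mc{H}^+ \setminus \{H\}$ persists. By Corollary \ref{corollary:I decreases}, $I(H_-) < I(H)$ and $I(H_+) < I(H)$, so the term $I(H)$ is replaced by two strictly smaller terms in the sequence; this alone forces the sorted sequence to strictly decrease lexicographically, provided that the indices of the other thick surfaces are unchanged. The hard part of the proof is therefore to check that, for each $J \in \mc{H}^+ \setminus \{H\}$, $I_\up(J)$ and $I_\down(J)$ are the same in $\mc{H}$ as in $\mc{H}_2$.

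To verify this, I would first observe that if $J$ is incomparable to $H$ under the flow-line order, then the compressionbodies appearing in the definition of $I_\up(J)$ and $I_\down(J)$ are untouched and the claim is trivial. If $J$ lies above $H$, then $\mc{H}^J_\up$ is unaffected while $\mc{H}_2^J_\down$ differs from $\mc{H}^J_\down$ by the replacement of the single lower compressionbody $H_\down$ with the two lower compressionbodies $(H_-)_\down$ and $(H_+)_\down$. Thus $|\mc{H}_2^J_\down| = |\mc{H}^J_\down|+1$ and by Lemma \ref{lem:mu goes down}(3),
\[
\sum_{K_\down \in \mc{H}_2^J_\down}\mu_\down(K) - \sum_{K_\down \in \mc{H}^J_\down}\mu_\down(K) = -\mu_\down(H) + \mu_\down(H_-)+\mu_\down(H_+) = 6,
\]
and the $-6$ from the extra element in $\mc{H}_2^J_\down$ exactly cancels the $+6$ in the sum, so $I_\down(J)$ is preserved. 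The case when $J$ lies below $H$ is symmetric, using Lemma \ref{lem:mu goes down}(4) to control $I_\up(J)$. One must also argue that the internal consolidations producing $\mc{H}_2$ from $\mc{H}_1$ do not alter $I(J)$ for these surfaces; this follows from Lemma \ref{lem:consolidation leaves I}, which says that each consolidation simply deletes the index of the consolidated thick surface and leaves all other indices fixed.

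Combining these observations gives $\oc(\mc{H}_2) < \oc(\mc{H})$. The final step $\mc{H}_2 \to \mc{H}_3 = \mc{K}$ is a (possibly empty) sequence of consolidations, each of which removes one term from the sequence of indices by Lemma \ref{lem:consolidation leaves I}, so $\oc(\mc{K}) \leq \oc(\mc{H}_2)$. Chaining the inequalities yields $\oc(\mc{K}) < \oc(\mc{H})$. The main obstacle, as indicated above, is the bookkeeping that shows $I_\up(J)$ and $I_\down(J)$ are literally unchanged for all thick surfaces $J \neq H$: the subtlety is that although both the count $|\mc{H}^J_\bullet|$ and the sum of $\mu$-values shift when the untelescoping happens across the region between $J$ and $H$, Lemma \ref{lem:mu goes down} guarantees that the shifts cancel exactly, which is precisely the reason the constants $6$ and $-6$ appear in the definition of the oriented indices.
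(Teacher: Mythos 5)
Your proposal is correct and follows essentially the same route as the paper's proof: it invokes Corollary \ref{corollary:I decreases} for $I(H_\pm) < I(H)$, uses Lemma \ref{lem:mu goes down} to show the $\pm 6$ shifts cancel so that $I_\up(J)$ and $I_\down(J)$ are unchanged for every other thick surface $J$, and finishes with Lemma \ref{lem:consolidation leaves I} for the final consolidations. The only cosmetic difference is that you work out the case of $J$ above $H$ in detail where the paper works out $J$ below $H$; these are symmetric.
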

\begin{proof}
Let $\mc{H}_1$, $\mc{H}_2$, and $\mc{H}_3 = \mc{K}$ be the multiple v.p.-bridge surfaces created during the elementary thinning sequence. Corollary \ref{corollary:I decreases}, shows that $I(H_\pm) < I(H)$. 

If $J \subset \mc{H}^+ \setminus H$, then both $I_\up(J)$ and $I_\down(J)$ are unchanged in passing from $\mc{H}$ to $\mc{H}_2$. To see this, consider the possible locations of $J$. If $J$ is neither above nor below $H$, then $J$ is neither above nor below either of  $H_-$ nor $H_+$ and so $I(J)$ is unchanged. If $J$ is below $H$, then $J$ is below both $H_-$ and $H_+$, as any flow line from $J$ to $H$ extends to a flow line from $J$ to $H_+$ passing through $H_-$. In this case, passing from $\mc{H}$ to $\mc{H}_2$ increases the number of v.p.-compressionbodies above $J$ by 1 and does not change the number of v.p.-compressionbodies below $J$. In the calculation of $I_\up(J)$, we replace $\mu_\up(H)$ with $\mu_\up(H_-) + \mu_\up(H_+)$. By Lemma \ref{lem:mu goes down}, this increases the sum of the indices of the upper v.p.-compressionbodies above $J$ by 6. It does not change the sum of the indices of the lower v.p.-compressionbodies below $J$. Thus, $I(J)$ does not increase when passing from $\mc{H}$ to $\mc{H}_2$. The analysis when $J$ is above $H$ is nearly identical.

We may conclude, therefore, that $\oc(\mc{H}_2) < \oc(\mc{H})$. Finally, either $\mc{H}_3 = \mc{H}_2$ or $\mc{H}_3$ is obtained from $\mc{H}_2$ by one or two consolidations. Thus, by Lemma \ref{lem:consolidation leaves I}:
\[
\oc(\mc{H}_3) \leq \oc(\mc{H}_2) < \oc(\mc{H}),
\]
as desired.
\end{proof}

\subsection{Index is non-negative}

The next lemma will help ensure that our oriented complexity guarantees that we cannot perform an infinite sequence of simplifying moves on an oriented v.p.-compressionbody.

\begin{lemma}\label{lem:bounded below}
Suppose that no component of $\mc{H}^-$ is a sphere intersecting $T$ exactly once. Then for any  thick surface $H \subset \mc{H}^+$, both $I_\up(H)$ and $I_\down(H)$ are non-negative.
\end{lemma}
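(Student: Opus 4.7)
The plan is to induct on $n := |\mc{H}^H_\up|$. The base case $n = 1$ is immediate: $\mc{H}^H_\up = \{H_\up\}$ gives $I_\up(H) = \mu(H_\up) \geq 0$ by Lemma \ref{lem:compressing}. For the inductive step with $n \geq 2$, I first note that $H_\up$ cannot itself be a trivial ball compressionbody---otherwise $\boundary_- H_\up = \nil$ and no other upper compressionbody can lie above $H$, forcing $n = 1$---so $\mu(H_\up) \geq 6$. The elements of $\mc{H}^H_\up$ form the vertices of a directed acyclic graph (no closed flow lines) in which $J \to J'$ when flow out of $\boundary_- J$ crosses an adjacent lower compressionbody and thick surface to reach $J'$; this DAG has sinks, and I pick one, $J_0 = K_0^\up$, with $J_0 \neq H_\up$. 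Let $C_0 = K_0^\down$ denote the lower compressionbody directly below $K_0$.

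The reduction step is to excise $\interior(J_0 \cup K_0 \cup C_0)$ from $M$, obtaining a new oriented multiple v.p.-bridge surface $\mc{K}$ on $M' = M \setminus \interior(J_0 \cup K_0 \cup C_0)$ in which the thin surfaces of $\boundary_- C_0 \cap \mc{H}^-$ have been promoted to boundary components of $M'$. Because $J_0$ is a DAG sink, removing it disconnects no other member of $\mc{H}^H_\up$ from $H$, so $|\mc{K}^H_\up| = n - 1$. A direct calculation from the definition yields
\[
I_\up^{\mc{K}}(H) \;=\; I_\up(H) + 6 - \mu(J_0),
\]
and the hypothesis that no component of $\mc{K}^-$ is a sphere intersecting $T$ once is inherited from $\mc{H}$. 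If $J_0$ is not a ball then $\mu(J_0) \geq 6$ by Lemma \ref{lem:compressing}, so $I_\up^{\mc{K}}(H) \leq I_\up(H)$, and the inductive hypothesis immediately gives $I_\up(H) \geq I_\up^{\mc{K}}(H) \geq 0$.

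The hard part, and the main obstacle, is the case where $J_0$ is a trivial ball, since then $\mu(J_0) \in \{0, 4\}$ and the crude inductive bound $I_\up^{\mc{K}}(H) \geq 0$ only delivers $I_\up(H) \geq -6$ (or $-2$), which is insufficient. To recover the missing slack I would use that $C_0$ itself cannot be a ball---it has at least one thin surface in $\boundary_- C_0$ carrying flow from below---so $\mu(C_0) \geq 6$ by Lemma \ref{lem:compressing}. Writing $\nu(S) := -3\chi(S) + 2|S \cap T|$, this translates into $\nu(\boundary_- C_0) \leq \nu(K_0) \in \{-6, -2\}$. Because the hypothesis excludes 1-punctured thin spheres in $\mc{H}^-$, this low total $\nu$ can only be realized by genuine (unpunctured or doubly-punctured) sphere components of $\boundary_- C_0$, which become boundary spheres of $M'$ contributing large negative $\nu$ to $\boundary M'$. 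Tracking how these spheres interact with the non-ball upper-compressionbody inequalities $\mu(J) \geq 6$ inside $\mc{K}$, and iterating across all ball sinks in $\mc{H}^H_\up$, should yield the refined bound $I_\up^{\mc{K}}(H) \geq 6 - \mu(J_0)$ needed to close the induction. The dual statement $I_\down(H) \geq 0$ follows by reversing the orientation of $\mc{H}$.
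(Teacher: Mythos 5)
Your setup (peel off one upper v.p.-compressionbody at a time and induct) is sound as far as it goes, and the easy case $\mu(J_0)\geq 6$ is fine. But the proof has a genuine gap exactly where you flag it: when the sink $J_0$ is a trivial ball compressionbody you need $I_\up^{\mc{K}}(H) \geq 6 - \mu(J_0)$, which is strictly stronger than the inductive hypothesis $I_\up^{\mc{K}}(H)\geq 0$, and your induction as formulated cannot deliver it. The missing $6-\mu(J_0)\in\{2,6\}$ of slack is not recoverable from $\mu(C_0)\geq 6$ alone, because $C_0$ is a lower compressionbody and never appears in the sum defining $I_\up$; the slack actually lives inside $\mu(A)$ for the \emph{other} upper compressionbody $A$ adjacent to $J_0$ across $C_0$ (its negative boundary contains a $0$- or $2$-punctured sphere $P\subset \boundary_- C_0$, contributing $-3\chi(P)+2|P\cap T| = -6+2|P\cap T|$ to $\mu(A)$). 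Since your excision leaves every other upper compressionbody untouched, that slack is never cashed in, and "iterating across all ball sinks" does not resolve this — each iteration faces the same deficit.

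The paper closes this case by a different reduction: rather than excising $J_0\cup K_0\cup C_0$, it cuts $M$ open along the single sphere $P\subset\boundary_- A$ and caps off with a trivial ball, replacing $A$ by $\wihat{A}$ with $\mu(A) = \mu(\wihat{A}) + 6 - 2|P\cap T|$. This removes the ball $C=J_0$ from the count (cost $\mu(C)-6$) while simultaneously crediting $6-2|P\cap T|$ through the change in $\mu(A)$, and the two exactly offset because $\mu(C)=0$ forces $C\cap T=\nil$ and hence $|P\cap T|=0$. Two further points you would need: (i) a preliminary reduction to the case where no component of $(M,T)\setminus\mc{H}$ is a product adjacent to $\mc{H}^-$, which is what guarantees $\wihat{A}$ is not itself a trivial ball so the induction quantity (number of trivial balls above $H$, not $|\mc{H}^H_\up|$) actually drops; and (ii) an argument (the paper uses the tree of ghost arcs in $C_0$ together with the no-once-punctured-sphere hypothesis) that every component of $\boundary_- C_0$ is a $0$- or $2$-punctured sphere — your assertion that "low total $\nu$ can only be realized by sphere components" identifies only that \emph{some} such component exists, not that the one carrying the flow line to $J_0$ is one.
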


\begin{proof}
We prove the statement for $I_\up(H)$; the proof of the statement for $I_\down(H)$ is nearly identical. We may assume that $T$ has no interior vertices (drill them out if necessary). We will also work under the assumption that no v.p.-compressionbody of $(M,T)\setminus \mc{H}$ is a product adjacent to a component of $\mc{H}^-$. To see that we may do this, recall from the proof of Lemma \ref{lem:consolidation leaves I} that consolidation leaves $I_\up(H)$ unchanged if $H$ is not consolidated. If $H$ is consolidated, and $H_\down$ is the product region, then it is easy to see that either $I_\up(H) = \mu(H_\up) \geq 0$ or $I_\up(H)$ is equal to $\mu(H_\up) + I_\up(J) \geq I_\up(J)$ for some thick surface $J$ above $H$. Finally, if $H_\up$ is the product region, then there exists a thick surface $J$ above $H$ such that $\boundary_- H_\up \subset \boundary_- J_\down$. We may calculate $I_\up(H)$ from $I_\up(J)$ by subtracting 6 since $\mc{H}^H_\up = \mc{H}^J_\up \cup H_\up$ and also adding 6 since $\mu(H_\up) = 6$. Thus, if $I_\up(J)$ is non-negative, so is $I_\up(H)$. Henceforth, we assume that $(M,T)\setminus \mc{H}$ has no product regions adjacent to a component of $\mc{H}^-$.

We can express the definition of $I_\up(H)$ as:
\[
 I_{\up}(H) = 6 + \sum_{J_\up \in \mc{H}^H_\up} (\mu(J_{\up})- 6) \\
\]

By Lemma \ref{lem:compressing}, $\mu(J_{\up}) \geq 6$ unless $J_{\up }$ is $(B^3, \emptyset)$ or $(B^3, \text{ arc})$ in which cases $\mu(J_{\up })=0$ and  $\mu(J_{\up}) =4$ respectively. Thus, if no element of $\mc{H}^H_\up$ is a trivial ball compressionbody, then  $I_\up(H) \geq 0$. Assume, therefore that at least one element of $\mc{H}^H_\up$ is a trivial ball compressionbody. We induct on the number $N(H, \mc{H})$ of trivial ball compressionbodies in $\mc{H}^H_\up$.

If $H_\up$ is $(B^3, \emptyset)$ or $(B^3, \text{ arc})$, then $|\mc{H}^H_\up| = 1$ and $I_\up(H) = \mu(H_\up) \in \{0,4\}$, as desired. We may assume, therefore, that $|\mc{H}^H_\up| \geq 2$.

\begin{figure}[ht]
\labellist
\small\hair 2pt
\pinlabel{$C_1$} at 142 195
\pinlabel{$C_2$} at 237 73
\pinlabel{$V$} at 147 147
\endlabellist
\includegraphics[scale=0.5]{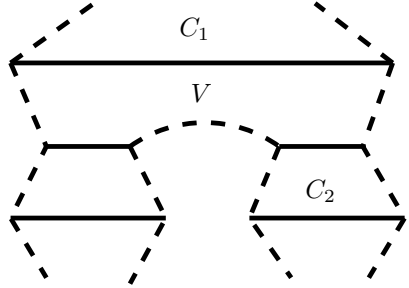}
\caption{The v.p.-compressionbodies $C_1$ and $C_2$ are adjacent in $\mc{H}^H_\up$ for $H = \boundary_+ C_2$ or any thick surface $H$ below $\boundary_+ C_2$. In this example, there is another possible choice for $C_2$.}
\label{fig:upperadjacent}
\end{figure}

We will call  v.p.-compressionbodies $C_1, C_2 \in \mc{H}^H_\up$ \defn{adjacent in $\mc{H}^H_\up$} if there is a v.p.-compressionbody $V$ such that $\boundary_+C_1=\boundary_+V$ and $\boundary_-C_2\cap \boundary_-V \neq \emptyset$ or vice versa. See Figure \ref{fig:upperadjacent} for an example. If $C_1 \in \mc{H}^H_\up$ is a trivial ball compressionbody, then there must be a v.p.-compressionbody $C_2 \in \mc{H}^H_\up$ adjacent in $\mc{H}^H_\up$ to $C_1$ as there is a flow line from $H$ to $\boundary_+ C_1 \neq H$. Observe that in such a situation, $C_2$ is not a trivial ball compressionbody (since $\boundary_- C_2 \neq \nil$).

Furthermore, if $C_1$ is a trivial ball compressionbody adjacent in $\mc{H}^H_\up$ to $C_2$, with $V$ the lower compressionbody incident to both, then $\boundary_+ V$ is a zero or twice-punctured sphere. Consequently $\boundary_- V$ is the union of spheres. Let $\Gamma$ be the graph with vertices the components of $\boundary_- V$ and edges corresponding to the ghost arcs in $V$. Since $\boundary_+ V$ is a sphere, $\Gamma$ is the union of isolated vertices and trees. Since no component of $\boundary_- V$ is a once-punctured sphere, each component $P$ of $\boundary_- V$ is a zero or twice-punctured sphere. In particular, if $C_1 \cap T = \nil$, then $P$ is unpunctured.

Suppose now that $A \in \mc{H}^H_\up$ is adjacent in $\mc{H}^H_\up$ to a trivial ball compressionbody $C \in \mc{H}^H_\up$. Choose a single component $P$ of $\boundary_- A$ such that a flow line from $H$ to $\boundary_+ C$ passes through $P$. This implies that $P \subset \boundary_- V$ where $V$ is the lower v.p.-compressionbody incident to both $A$ and $C$. By the remarks of the previous paragraph (with $C_1 = C$ and $C_2 = A$), $P$ is a zero or twice punctured sphere (as are all components of $\boundary_- V$).

Cut $(M, T)$ open along all components of $\boundary_- V \setminus P $, turning those components into components of $\boundary M$ which are zero or twice-punctured spheres. Let $\mc{H}'$ be the components of $\mc{H}$ which are not now components of $\boundary M$. Observe that $\mc{H}'^+ = \mc{H}^+$ and that now every flow line from $H$ to $\boundary_+ C$  must pass through $P$. We have not, however, changed $I_\up(H)$ since any compressionbody of $(M,T)\setminus \mc{H}$ which was above $H$ is still a compressionbody of $(M,T)\setminus \mc{H}'$ above $H$ and we have not created any new v.p.-compressionbodies above $H$. We may have disconnected $M$; however, any v.p.-compressionbodies not in the component of $M$ containing $H$ were not above $H$ before the cut and we can ignore them for the purposes of the calculation. For convenience of notation, use $\mc{H}$ instead of $\mc{H}'$ and assume that every flow line from $H$ to $\boundary_+ C$ must pass through $P$. 

Now cut open $M$ along $P$. This cuts $M$ into two components $M_1$ and $M_2$ with $M_1$ containing $H$ and $M_2$ containing $C$. Cap off the components of $\boundary M_1$ and $\boundary M_2$ corresponding to $P$ with $(B^3, \nil)$ or $(B^3, \text{ arc})$ corresponding to whether or not $P$ is a zero or twice punctured sphere. Let $(\wihat{M}_i, \wihat{T}_i)$ for $i = 1,2$ be these new (3-manifold, graph) pairs. Observe that $\wihat{\mc{H}} = \mc{H}\setminus (P \cup \boundary_+ C)$ is a multiple v.p.-bridge surface for $(\wihat{M}_1, \wihat{T}_1)$. 

The only upper v.p.-compressionbody affected by this is $A$; we obtain a new v.p.-compressionbody $\wihat{A}$. If $\wihat{A}$ is a trivial ball compressionbody, then $P = \boundary_- A$; $A$ contains no bridge arcs; and $\boundary_+ A$ is a sphere. This is enough to guarantee that $A$ is a product compressionbody, contrary to hypothesis. Thus, with respect to $\wihat{\mc{H}}$ there is one fewer trivial ball compressionbody above $H$ than with respect to $\mc{H}$. Let $\wihat{I}$ be $I_\up(H)$ with respect to $\wihat{\mc{H}}$ and let $I$ be $I_\up(H)$ with respect to $\mc{H}$. By our inductive hypothesis, we have $\wihat{I} \geq 0$. 

We have
\[
\mu(A) = \mu(\wihat{A}) + 6 - 2|P \cap T|.
\]

Thus,
\[
I = \wihat{I} + 6 - 2|P \cap T| + (\mu(C) - 6) \geq \mu(C) - 2|P \cap T|
\]
Recalling that $\mu(C) \in \{0,4\}$ and $|P \cap T| \in \{0,2\}$, we need only realize that if $\mu(C) = 0$, then $|P \cap T| = 0$ to conclude that
\[
I \geq 0.
\]
\end{proof}

\begin{remark}\label{rmk:terminates} By Lemma \ref{lem:bounded below}, each term of $\oc(\mc{H})$ is non-negative. Thus, any sequence of multiple v.p.-bridge surfaces $\mc{H}$ with $\oc(\mc{H})$ strictly decreasing must terminate.
\end{remark}

\subsection{Extended thinning moves}

In this section, we formalize the fact that oriented complexity forbids an infinite sequence of simplifying moves to an oriented multiple v.p.-compressionbody.

\begin{definition}
An oriented multiple v.p.-bridge surface $\mc{H}$ is \defn{reduced} if it does not contain a generalized stabilization, a perturbation, or a removable arc and if no component of $(M,T)\setminus \mc{H}$ is a trivial product compressionbody adjacent to a component of $\mc{H}^-$.
\end{definition}

\begin{definition}
Suppose that $\mc{H}\in \vpoH(M,T)$ is reduced and that $T$ is irreducible.  An \defn{extended thinning move} applied to $\mc{H}$ consists of the following steps in the following order:
\begin{enumerate}
\item Perform an elementary thinning sequence
\item Destabilize, unperturb, and undo removable arcs until no generalized stabilizations, perturbations, or removable arcs remain
\item Consolidate all components of $\mc{H}^-$ and $\mc{H}^+$ cobounding a trivial product compressionbody in $(M,T) \setminus \mc{H}$
\item Repeat (2) and (3) as much as necessary until $\mc{H}$ does not have a generalized stabilization, perturbation, or removable arc or product region adjacent to $\mc{H}^-$.
\end{enumerate}
\end{definition}

\begin{remark}\label{Step 2 before Step 3}
Corollary \ref{cor:elem thinning decreases I},  Lemma \ref{lem:I decreases under gen destab}, and Lemma \ref{lem:consolidation leaves I} show that each of the steps (1), (2), (3), if applied non-vacuously, strictly decrease oriented complexity. Thus, by Remark \ref{rmk:terminates} they can occur only finitely many times, until either we cannot (non-vacuously) perform any of the steps of an extended thinning move or until we have a multiple v.p.-bridge surface having a thin level which is a sphere intersecting $T$ exactly once.  

We have phrased the steps as we have in order to guarantee that if $\mc{H}$ is reduced, then an extended thinning move applied to $\mc{H}$ results in a reduced multiple v.p.-bridge surface. If $\mc{H} \in \vpoH(M,T)$ is not reduced, we may perform a sequence of consolidations, generalized destabilizations, unperturbings, and undoings of removable arcs to make it reduced. (Such a sequence is guaranteed to terminate because each of those operations strictly decreases oriented complexity.)
\end{remark}

\begin{definition}
If $\mc{H}, \mc{K} \in \vpoH(M,T)$ then we write $\mc{H} \more \mc{K}$ if either of the following holds:
\begin{itemize}
\item $\mc{H}$ is reduced and $\mc{K}$ is obtained from $\mc{H}$ by an extended thinning move, or
\item $\mc{H}$ is not reduced, $\mc{K}$ is reduced and $\mc{K}$ is obtained from $\mc{H}$ by a sequence of consolidations, generalized destabilizations, unperturbings, and undoing of removable arcs.
\end{itemize}
We then extend the definition of $\more$ so that it is a partial order on $\vpoH(M,T)$. In particular, if $\mc{H}$ is reduced, then $\mc{H} \more \mc{K}$ means that $\mc{K}$ is obtained from $\mc{H}$ by a (possibly empty) sequence of extended thinning moves. 
\end{definition}

Recall that in a poset, a ``least element'' is an element  $x$ with the property that no element is strictly less than $x$. In our context, we say that an element $\mc{K} \in \vpoH(M,T)$ is a \defn{least element} or \defn{locally thin} if it is reduced and if $\mc{K} \more \mc{K}'$ implies that $\mc{K} = \mc{K}'$. 

The following result follows immediately from our work above. The hypothesis that $T$ is irreducible guarantees that in a sequence of extended thinning moves we never have a thin surface which is a sphere intersecting $T$ exactly once.

\begin{theorem}\label{partial order}
Let $(M,T)$ be a (3-manifold, graph) pair with $T$ irreducible. Suppose that no component of $\boundary M$ is a sphere intersecting $T$ two or fewer times. Then, for all $\mc{H} \in \vpoH(M, T)$ there is a least element (i.e. locally thin) $\mc{K} \in \vpoH(M,T)$ such that $\mc{H} \more \mc{K}$.
\end{theorem}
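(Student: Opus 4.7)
The plan is to establish the theorem by iterating thinning operations and invoking the well-foundedness of the oriented complexity $\oc$ to force termination.

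First, I reduce $\mc{H}$ to a reduced multiple v.p.-bridge surface. Given $\mc{H} \in \vpoH(M,T)$, repeatedly apply any available consolidation of a trivial product compressionbody adjacent to $\mc{H}^-$, generalized destabilization, unperturbing, or undoing of a removable arc. Each such operation strictly decreases $\oc$ by Lemmas \ref{lem:I decreases under gen destab} and \ref{lem:consolidation leaves I}, so by Remark \ref{rmk:terminates} the process must terminate at some reduced $\mc{H}_0 \in \vpoH(M,T)$ with $\mc{H} \more \mc{H}_0$ (using the second clause of the definition of $\more$ when $\mc{H}$ was not already reduced).

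Second, I iteratively apply extended thinning moves starting at $\mc{H}_0$. As long as the current reduced surface $\mc{H}_i$ admits an extended thinning move, let $\mc{H}_{i+1}$ be its result; Remark \ref{Step 2 before Step 3} guarantees that $\mc{H}_{i+1}$ is again reduced, and Corollary \ref{cor:elem thinning decreases I} together with Lemmas \ref{lem:I decreases under gen destab} and \ref{lem:consolidation leaves I} applied to the subsequent destabilization and consolidation steps yields $\oc(\mc{H}_{i+1}) < \oc(\mc{H}_i)$.

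Third, I invoke well-foundedness for termination. By Lemma \ref{lem:bounded below}, each entry of $\oc(\mc{H}_i)$ is a non-negative integer, so $\oc$ takes values in the set of finite non-increasing sequences of non-negative integers under lexicographic order, which is well-ordered. Hence the chain $\oc(\mc{H}_0) > \oc(\mc{H}_1) > \cdots$ must stabilize, and the terminal $\mc{K}$ is a reduced surface admitting no extended thinning move, i.e., $\mc{K}$ is locally thin. Transitivity of $\more$ gives $\mc{H} \more \mc{K}$ as required.

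The main obstacle is ensuring that Lemma \ref{lem:bounded below} remains applicable at every stage, which requires that no thin component ever be a sphere meeting $T$ exactly once. The hypothesis that $T$ is irreducible forbids such a sphere in $M$ altogether, so no thinning operation can produce one as a thin surface; this is precisely why irreducibility is invoked at this stage rather than earlier. The boundary hypothesis similarly prevents the pathologies in $\boundary$-destabilization that would break the complexity estimate in Lemma \ref{lem:I decreases under gen destab}. Together these assumptions guarantee that every intermediate surface is a legitimate element of $\vpoH(M,T)$ and that the well-ordering of $\oc$ closes the argument.
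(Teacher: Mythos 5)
Your proposal is correct and follows essentially the same route as the paper, which deduces the theorem directly from Remark \ref{Step 2 before Step 3}, Remark \ref{rmk:terminates}, and the observation that irreducibility of $T$ prevents once-punctured sphere thin levels from ever arising (so Lemma \ref{lem:bounded below} and the complexity estimates remain valid throughout). Your explicit two-stage structure (first reduce, then iterate extended thinning moves) and the appeal to the well-ordering of non-increasing sequences of non-negative integers under lexicographic order are exactly what the paper's argument relies on.
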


\section{Sweepouts}\label{sec: sweepouts}

Sweepouts, as in most applications of thin position, are the key tool for finding disjoint compressing discs on two sides of a thick surface. In this section, we will use $X - Y$ to denote the set-theoretic complement of $Y$ in $X$, as opposed to $X\setminus Y$ which indicates the complement of an open regular neighborhood of $Y$ in $X$.

\begin{definition}
Suppose that $(C,T)$ is a v.p.-compressionbody and that $\Sigma \subset C$ is a trivalent graph embedded in $C$ such that the following hold:
\begin{itemize}
\item $(C,T)\setminus\Sigma$ is homeomorphic to $(\boundary_+ C \times I, \text{vertical arcs})$ 
\item $\Sigma$ contains the ghost arcs of $T$ and no interior vertex of $\Sigma$ lies on a ghost arc
\item Each boundary vertex of $\Sigma$ lies on $T$ or on $\boundary_- C$.
\item Any edge of $T$ which is not a ghost arc and which intersects $\Sigma$ is a bridge arc intersecting $\Sigma$ in a boundary vertex.
\end{itemize}
 Then $\Sigma$ is a \defn{spine} for $(C,T)$. See Figure \ref{Fig: vpSpine} for an example.
\end{definition}

\begin{center}
\begin{figure}[tbh]
%\centering
\includegraphics[scale=0.4]{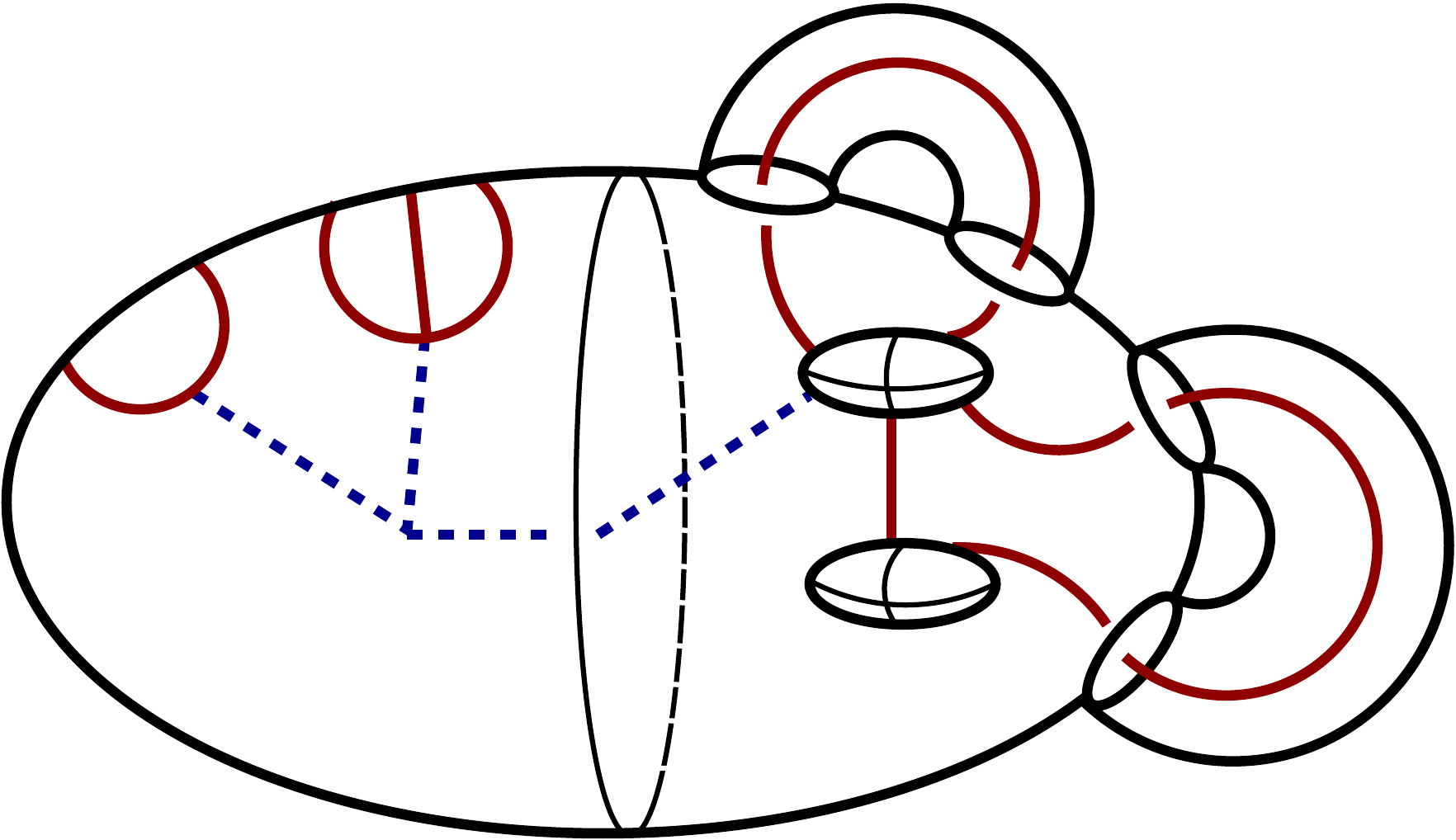}
%%\scalebox{0.4}{\input{bridgeboundarydisc1.eps_t}}
\caption{A spine for the v.p.-compressionbody from Figure \ref{Fig:  vpcompressionbody} consists of the dashed blue graph together with the edges of $T$ that are disjoint from $\boundary_+ C$.}
\label{Fig:  vpSpine}
\end{figure}
\end{center}

Suppose that $H \in \vpH(M,T)$ is connected and that $\Sigma_\up$ and $\Sigma_\down$ are spines for $(H_\up, T \cap H_\up)$ and $(H_\down, T \cap H_\down)$. The manifold $M - (\Sigma_\up \cup \Sigma_\down)$ is homeomorphic to $H \times (0,1)$ by a map taking $T - (\Sigma_\up \cup \Sigma_\down)$ to vertical edges. We may extend the homeomorphism to a map $h\co M \to I$ taking $\Sigma_\down$ to $-1$ and $\Sigma_\up$ to $+1$. The map $h$ is called a \defn{sweepout} of $M$ by $H$. Note that for each $t \in (0,1)$, $H_t = h^{-1}(t)$ is properly isotopic in $M \setminus T$ to $H \setminus T$, that $h^{-1}(-1) = \boundary_- H_\down \cup \Sigma_\down$, and $h^{-1}(1) = \boundary_- H_\up \cup \Sigma_\up$. If we perturb $h$ by a small isotopy, we also refer to the resulting map as a sweepout.

\begin{theorem}\label{Thm: Sweepout}
Let $(M,T)$ be a (3-manifold, graph) pair. Suppose that $F \subset (M,T)$ is an embedded surface and assume that $H \in \vpH(M,T)$ is connected and doesn't bound a trivial v.p.-compressionbody on either side. Then, $H$ can be isotoped transversally to $T$ such that after the isotopy  $H$ and $F$ are transverse and one of the following holds
 \begin{enumerate}
\item\label{it: disjoint} $H \cap F = \nil$
\item\label{it: essential} $H \cap F \neq \nil$, every component of $H \cap F$ is essential in $F$ and no component of $H \cap F$ bounds an sc-disc for $H$.
\item\label{it: sc-weakly red} $H$ is sc-weakly reducible.
\end{enumerate}
\end{theorem}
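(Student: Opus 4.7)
The plan is a standard sweepout and labelling argument, adapted to v.p.-compressionbodies. Using the hypothesis that $H$ is connected and bounds no trivial v.p.-compressionbody on either side, I would choose spines $\Sigma_\up\subset H_\up$ and $\Sigma_\down\subset H_\down$ and form the associated sweepout $h\co M\to[-1,1]$, with $h^{-1}(\pm 1)=\boundary_- H_\pm\cup\Sigma_\pm$ and $H_t:=h^{-1}(t)$ isotopic to $H$ for every $t\in(-1,1)$. After a small perturbation I may assume $F$ is transverse to $\Sigma_\up\cup\Sigma_\down$ and that $h|_F$ is Morse, so $F$ is transverse to $H_t$ at every regular value $t$. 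I would also drill out the interior vertices of $T$ at the outset so they do not interfere with the sc-disc bookkeeping.

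At each regular $t$ I would first clean up the intersection by isotopies of $H_t$ (which are isotopies of $H$): if some curve of $F\cap H_t$ cuts off a subdisc $D\subset F$ with $|D\cap T|\leq 1$ that is \emph{not} an sc-disc for $H_t$, then by the definition of sc-disc there is a disc $E\subset H_t$ so that $D\cup E$ cobounds a $3$-ball meeting $T$ in at most an unknotted arc; isotoping $H_t$ across this ball strictly decreases $|F\cap H_t|$. Iterating at each regular $t$, either $F\cap H_t=\nil$ at some $t$ (giving conclusion (\ref{it: disjoint})), or we arrive at a state in which every innermost subdisc of $F$ with at most one puncture is an sc-disc for $H_t$.

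Now declare $t\in I_U$ if some curve of $F\cap H_t$ bounds an sc-disc for $H_t$ on the upper side, and $t\in I_D$ if the analogous statement holds on the lower side. Any $t\in I_U\cap I_D$ produces disjoint sc-discs on opposite sides of $H_t$ (disjoint because their interiors sit in distinct compressionbodies), giving an sc-weakly reducing pair and hence conclusion (\ref{it: sc-weakly red}). If some regular $t$ lies outside $I_U\cup I_D$, then no curve of $F\cap H_t$ bounds an sc-disc for $H_t$, and by the cleaning step no curve of $F\cap H_t$ bounds an unpunctured or once-punctured disc in $F$ that fails to be an sc-disc; hence every curve of $F\cap H_t$ is essential in $F\setminus T$ and bounds no sc-disc, yielding conclusion (\ref{it: essential}).

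The remaining case is when $I_U$ and $I_D$ together cover the regular values but are disjoint, and this is the main obstacle. For $t$ close to $-1$ the lower compressionbody $(H_t)_\down$ is a regular neighbourhood of $\Sigma_\down$; curves of $F\cap H_t$ near $\Sigma_\down$ are meridian circles around points of $F\cap\Sigma_\down$ and each bounds a meridian disc in $(H_t)_\down$ that is a compressing or cut disc according as the edge of $\Sigma_\down$ is or is not a ghost arc, so $I_D$ contains a neighbourhood of $-1$; symmetrically $I_U$ contains a neighbourhood of $+1$. Both $I_U$ and $I_D$ are also open, since an sc-disc for $H_{t_0}$ persists transversely as an sc-disc for $H_t$ with $t$ near $t_0$. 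A standard saddle analysis at each Morse critical value of $h|_F$ shows that a label is preserved across the critical value: the intersection changes by a band move which can be arranged to avoid any given sc-disc, so the disc descends to nearby regular values on both sides of the critical level. Hence at any transition from an $I_U$-only region to an $I_D$-only region the critical value itself must lie in $I_U\cap I_D$, delivering conclusion (\ref{it: sc-weakly red}). The technical heart of the argument is precisely this Morse saddle analysis, together with care regarding when curves inessential in $F$ bound genuine sc-discs versus merely parallel discs.
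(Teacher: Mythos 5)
Your overall strategy (sweepout from spines, labelling regular values according to which side of $H_t$ the curves of $F\cap H_t$ bound sc-discs on, saddle analysis at transitions) is the same as the paper's, and your handling of the unlabelled, doubly-labelled, and transition cases matches Cases 1--3 of the paper's proof. But there is a genuine gap in your ``remaining case'': you assert that $I_D$ contains a neighbourhood of $-1$ because curves of $F\cap H_t$ near $\Sigma_\down$ are meridians of points of $F\cap\Sigma_\down$. This presupposes $F\cap\Sigma_\down\neq\nil$, which need not hold. Since $H$ is connected, $\boundary_- H_\down\subset\boundary M$, and $F$ may meet $h^{-1}(-1)=\Sigma_\down\cup\boundary_- H_\down$ only along $\boundary F\cap\boundary_- H_\down$: then for $t$ near $-1$ the curves of $F\cap H_t$ are parallel in $F$ to components of $\boundary F$ and bound vertical annuli, not sc-discs, in $(H_t)_\down$. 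In that situation every interval can carry the label $\up$ and only $\up$, so $I_U$ and $I_D$ ``cover and are disjoint'' with $I_D=\nil$, there is no transition, and your argument produces nothing. This is not a pathological case --- it is exactly the configuration in Corollary \ref{Vertical discs}, where $F$ is a c-disc for a component of $\boundary_- H_\down$.

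The paper's Case 4 handles this by producing the lower sc-disc from the \emph{spine} rather than from $F$: since $H$ does not bound a trivial v.p.-compressionbody below, $\Sigma_\down$ has an edge $e$; for $t$ in the interval below the first critical value of $h|_F$ the set $F\cap(H_t)_\down$ is a controlled collar, so a meridian disc $D_\down$ of $e$ can be chosen disjoint from it, and $D_\down$ pairs with the sc-disc above supplied by the $\up$ label to give an sc-weak reducing pair. Note this is the one place the hypothesis that $H$ bounds no trivial v.p.-compressionbody is actually used, and it is also why the conclusion is only ``sc-weakly reducible'' (the meridian of a spine edge may be a semi-compressing or semi-cut disc). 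Your proof never invokes that hypothesis in a load-bearing way, which is a sign the case is being missed. Separately, your claim that ``a label is preserved across the critical value'' is too strong as stated (labels do change at saddles in general); what the paper actually uses is that the curves realizing the labels at $v_i-\epsilon$ and $v_i+\epsilon$ can be projected to $H$ disjointly, so the two sc-discs form a weak reducing pair. That part of your argument is repairable; the missing Case 4 is not, without the spine-edge idea.
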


\begin{remark}
The essence of this argument can be found in many places. It originates with Gabai's original thin position argument \cite{G3}  and is adapted to the context of Heegaard splittings by Rubinstein and Scharlemann \cite{RubSch}. A version for graphs in $S^3$ plays a central role in \cite{GST}. 
\end{remark}

\begin{proof}
Let $h$ be a sweepout corresponding to $H$, as above. Perturb the map $h$ slightly so that $h|_F$ is Morse with critical points at distinct heights. Let 
\[
0 =v_0 < v_1 < v_2 < \cdots < v_n = 1
\]
be the critical values of $h|_F$. Let $I_i = (v_{i-1}, v_i)$. Label $I_i$ with $\down$ (resp. $\up$) if some component of $F \cap H_t$ bounds a sc-disc below (resp. above) $H_t$ for some $t \in I_i$.

Observe that, by standard Morse theory, the label(s) on $I_i$ are independent of the choice of $t \in I_i$.

\textbf{Case 1:} Some interval $I_i$ is without a label.

Let $t \in I_i$. If $H_t \cap F = \nil$, then we are done, so suppose that $H_t \cap F \neq \nil$.

Suppose that some component $\zeta \subset H_t \cap  F$ is inessential in $F$. This means that $\zeta$ bounds an unpunctured or once-punctured disc in $F$. Without loss of generality, we may assume that $\zeta$ is innermost in $F$. Let $D \subset F$ be the disc or once-punctured disc it bounds. Since $\zeta$ does not bound an sc-disc for $H_t$, the disc $D$ is properly isotopic in $M\setminus T$, relative to $\boundary D$ into $H_t$. Let $B \subset M$ be the 3-ball bounded by $D$ and the disc in $H_t$. By an isotopy supported in a regular neighborhood of $B$, we may isotope $H_t$ to eliminate $\zeta$ (and possibly some other inessential curves of $H_t \cap F$.). Repeating this type of isotopy as many times as necessary, we may assume that no curve of $H_t \cap F$ is inessential in $F$. If $H_t \cap F = \nil$, we have the first conclusion. If $H_t \cap F \neq \nil$, then we have the Conclusion \eqref{it: essential}. 

Suppose, therefore, that each $I_i$ has a label.

\textbf{Case 2:} Some $I_i$ is labelled both $\up$ and $\down$.

Since for each $t \in I_i$, $H_t$ is transverse to $F$ we have Conclusion \eqref{it: sc-weakly red}.

\textbf{Case 3:} There is an $i$ so that $I_i$ is labelled $\down$ and $I_{i+1}$ is labelled $\up$, or vice versa.

The labels cannot change from $I_i$ to $I_{i+1}$ at any tangency other than a saddle tangency. Let $\epsilon > 0$ be smaller than the lengths of the intervals $I_i$ and $I_{i+1}$. Since $H_t$ is orientable, under the projections of $H_{v_i - \epsilon}$ and $H_{v_i + \epsilon}$ to $H$, the  1-manifold  $H_{v_i - \epsilon} \cap F$ can be isotoped to be disjoint from $H_{v_i + \epsilon} \cap F$. Since some component of the former set bounds an sc-disc on one side of $H$ and some component of the latter set bounds an sc-disc on the other side of $H$, we have Conclusion \ref{it: sc-weakly red}  again.

\textbf{Case 4:} For every $i$, $I_i$ is labelled $\down$ and not $\up$ or for every $i$, $I_i$ is labelled $\up$ and not $\down$.

Without loss of generality, assume that each $I_i$ is labelled $\up$ and not $\down$. In particular, $I_1$ is labelled $\up$ and not $\down$. Fix $t \in I_1$ and consider $H_t$. Since $H$ does not bound a trivial v.p.-compressionbody to either side, the spine for  $(H_\down, T \cap H_\down)$ has an edge $e$. Since $I_i$ is below the lowest critical point for $h|_F$, the components of $F \cap (H_t)_\down$ intersecting $e$ are a regular neighborhood in $F$ of $F \cap e$. Let $D_\down$ be a meridian disc for $e$ with boundary in $H_t$ and which is disjoint from $F \cap (H_t)_\down$. Since $I_1$ is labelled $\up$, there is a component $\zeta \subset H_t \cap F$ such that $\zeta$ bounds an sc-disc $D_\up$ for $H_t$ in $(H_t)_\up$. The pair $\{D_\up, D_\down\}$ is then a weak reducing pair for $H_t$, giving Conclusion \eqref{it: sc-weakly red}.
\end{proof}

\begin{remark}
Observe that in Conclusion (3), we can only conclude that $H$ is sc-weakly reducible -- not that $H$ is c-weakly reducible. This arises in Case 4 of the proof, when we use an edge of the spine to produce an sc-disc. This is one reason for allowing semi-compressing and semi-cut discs in weak reducing pairs.
\end{remark}

\begin{corollary}\label{Vertical discs}
Suppose that $H \in \vpH(M,T)$ is connected and sc-strongly irreducible. If a component $S$ of $\boundary M$ is c-compressible, then the component of $(M,T) \setminus H$ containing $S$ is a trivial product compressionbody.
\end{corollary}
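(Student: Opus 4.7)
The plan is to argue by contradiction: suppose that the component $C$ of $(M,T) \setminus H$ containing $S$ is not a trivial product compressionbody. Because $S \subset \boundary_- C$ is nonempty, $C$ is also not a trivial ball compressionbody, so $(C, T \cap C)$ is a non-trivial v.p.-compressionbody. The standing hypothesis on $(M,T)$ that no component of $\boundary M$ is a sphere intersecting $T$ exactly once lets us apply Lemma \ref{Lem: Invariance} to $C$; in particular, part (2) asserts that $\boundary_- C$ admits no c-disc lying inside $C$. Let $D$ be a c-disc for $S$ in $M$, which exists by the c-compressibility hypothesis, and observe immediately that $D$ cannot lie entirely in $C$, for otherwise $D$ would be a forbidden c-disc for $\boundary_- C$ in $C$.

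The main case is when the other component $C'$ of $(M,T) \setminus H$ is also non-trivial, so Theorem \ref{Thm: Sweepout} applies to $H$ with $F = D$. Of its three conclusions, Conclusion (2) requires every component of $H \cap D$ to be essential in $D$; but every simple closed curve in a disc bounds a subdisc and is inessential, so (2) is impossible. Conclusion (1) gives $H \cap D = \nil$ after an isotopy of $H$, and since $D$ is connected with $\boundary D \subset S$, the entire disc lies in the compressionbody containing $S$, again contradicting Lemma \ref{Lem: Invariance}(2). Conclusion (3) gives that $H$ is sc-weakly reducible, directly contradicting sc-strong irreducibility. Thus the main case is impossible.

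If $C'$ is trivial (a trivial ball or a trivial product compressionbody), Theorem \ref{Thm: Sweepout} does not directly apply, and this is the main obstacle. However, Lemma \ref{Lem: Invariance}(1) implies $C'$ admits no sc-discs for $H$, so after isotoping $D$ to minimize $|D \cap H|$ every innermost subdisc of $D$ lying in $C'$ must be $\boundary$-parallel in $C'$, and indeed every planar component of $D \cap C'$ (a disc-with-holes) can be isotoped across $H$ into $C$. Iterating these pushes produces $D \subset C$, yet again contradicting Lemma \ref{Lem: Invariance}(2). The delicate point is when the (at most single) intersection of $D$ with $T$ lies inside $C'$ within the $\boundary$-parallel region used for the push: one must exploit the trivial structure of $(C', T \cap C')$, where the tangle is a trivial arc, a trivial tree, or a union of vertical arcs, to route the isotopy so that the resulting disc in $C$ remains a c-disc (possibly of a different compressing-vs-cut type) for $\boundary_- C$, preserving the contradiction. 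In every case we contradict our assumption, so $C$ must in fact be a trivial product compressionbody.
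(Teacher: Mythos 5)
Your proof is correct and follows essentially the same route as the paper's: rule out the case where the compressionbody opposite $S$ is trivial by pushing the c-disc into $C$ and contradicting Lemma \ref{Lem: Invariance}(2), and otherwise apply Theorem \ref{Thm: Sweepout}, whose three conclusions are each killed by c-incompressibility of $\boundary_- C$, the planarity of the disc, and sc-strong irreducibility, respectively. The only difference is organizational: the paper disposes of the trivial-$E$ case up front with a one-line push-off, whereas you defer it to the end and spell out that isotopy in more detail.
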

\begin{proof}
Let $F \subset (M,T)$ be a c-disc for $S$. Let $(C, T_C)$ and $(E, T_E)$ be the components of $(M,T)\setminus H$, with $S \subset \boundary_- C$. If $(E, T_E)$ is a trivial compressionbody, we may isotope $F$ out of $E$ to be contained in $C$. This contradicts the fact that $\boundary_- C$ is c-incompressible in $C$. Hence, $(E, T_E)$ is not a trivial product compressionbody.  Since $S \subset \boundary_- C$ is c-compressible, it either has positive genus or intersects $T$ at least 3 times. In particular, $(C, T_C)$ is not a trivial ball compressionbody. Suppose, for a contradiction, that $(C, T_C)$ is not a trivial product compressionbody. Then by Theorem \ref{Thm: Sweepout} $H$ can be isotoped transversally to $T$ such that after the isotopy one of the following holds:
\begin{enumerate}
\item $H \cap F = \nil$

\item $H \cap F \neq \nil$, every component of $H \cap F$ is essential in $F$ and no component of $H \cap F$ bounds an sc-disc for $H$.
\end{enumerate}

Since $\boundary_- C$ is c-incompressible in $C$, by Lemma \ref{Lem: Invariance}, the first conclusion cannot hold. Since no curve in a disc or once-punctured disc is essential, the second conclusion is also impossible. Thus, $(C, T_C)$ is a trivial product compressionbody.
\end{proof}

\begin{theorem}[Properties of locally thin surfaces]\label{Properties Locally Thin}
Suppose that $(M,T)$ is a (3-manifold, graph) pair, with $T$ irreducible. Let $\mc{H} \in \vpoH(M,T)$ be locally thin. Then the following hold:
\begin{enumerate}
\item $\mc{H}$ is reduced
\item Each component of $\mc{H}^+$ is sc-strongly irreducible in the complement of $\mc{H}^-$.
\item No component of $(M,T) \setminus \mc{H}$ is a trivial product compressionbody between $\mc{H}^-$ and $\mc{H}^+$. 
\item Every component of $\mc{H}^-$ is c-essential in $(M,T)$.
\item If $(M,T)$ is irreducible and if $\mc{H}$ contains a 2-sphere disjoint from $T$, then $T = \nil$ and $M = S^3$ or $M = B^3$.
\end{enumerate}
\end{theorem}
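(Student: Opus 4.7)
The plan is to treat the conclusions in order, using the definition of ``locally thin'' for the easy parts and sweepouts with innermost-disc reductions for the substantial ones. Conclusions (1) and (3) are immediate: a locally thin $\mc{K}$ is by definition reduced, and reducedness explicitly forbids a trivial product compressionbody adjacent to $\mc{H}^-$. For conclusion (2), suppose some $H \subset \mc{H}^+$ admits an sc-weak reducing pair disjoint from $\mc{H}^-$. Untelescoping along this pair initiates an elementary thinning sequence, which by Corollary \ref{cor:elem thinning decreases I} strictly decreases $\oc$; completing the resulting extended thinning move produces a $\mc{K}' \neq \mc{H}$ with $\mc{H} \more \mc{K}'$, contradicting local thinness.

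Conclusion (4) is the main obstacle, and c-incompressibility is its hardest sub-claim. Fix $F \subset \mc{H}^-$ and suppose $F$ has a c-disc $D$. First minimize $|D \cap \mc{H}^-|$: an innermost subdisc $D_0$ of $D$ cut off by some $F' \subset \mc{H}^-$ sits in a single v.p.-compressionbody, so Lemma \ref{Lem: Invariance}(2) prevents it from being a c-disc for $F'$, forcing $\boundary D_0$ to bound a matching-punctured disc in $F'$ and enabling an isotopy that lowers $|D \cap \mc{H}^-|$. Thus we may assume $D \cap \mc{H}^- = \nil$; then $D$ lies in some component $M'$ of $M \setminus \mc{H}^-$ with $\boundary D$ on the copy $F_1$ of $F$. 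Let $H \subset M'$ be the thick surface adjacent to $F_1$ and let $N = C \cup C'$ with $C, C'$ the two v.p.-compressionbodies meeting at $H$ and $F_1 \subset \boundary_- C$. Reducedness excludes $C$ from being a trivial product compressionbody, and the inclusion $F_1 \subset \boundary_- C$ prevents $C$ from being a trivial ball compressionbody. If $C'$ is trivial, the simple topology of $C'$ together with Lemma \ref{spheres in v.p.-compressionbodies} lets us isotope $D$ to be disjoint from $H$, placing $D \subset C$ and contradicting Lemma \ref{Lem: Invariance}(2). Otherwise Theorem \ref{Thm: Sweepout} applies to $H \in \vpH(N, T \cap N)$ with surface $D$: the ``essential intersection'' conclusion is vacuous since a disc or once-punctured disc has no essential simple closed curves; the ``$H \cap D = \nil$'' conclusion again places $D \subset C$; and sc-weak reducibility of $H$ in $N \subset M \setminus \mc{H}^-$ contradicts conclusion (2). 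For $F$ not $\boundary$-parallel: a $\boundary$-parallel $F$ would exhibit the thick surface in the resulting product region as admitting a (possibly meridional or ghost) $\boundary$-stabilization, contradicting reducedness. For a spherical $F \subset \mc{H}^-$ with $F \cap T = \nil$ not bounding a ball in $M \setminus T$: if such a ball $B$ existed, an innermost-thick-surface analysis in $B$ would exhibit a thick sphere bounding $(B^3, \nil)$ or $(S^2 \times I, \nil)$ on its $B$-interior side, producing either a generalized destabilization or a trivial product compressionbody adjacent to $\mc{H}^-$, again contradicting reducedness.

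For conclusion (5): a sphere $S \subset \mc{H}$ disjoint from $T$ cannot be thin, since (4) and irreducibility of $M \setminus T$ are incompatible. Hence $S \subset \mc{H}^+$; each adjacent v.p.-compressionbody has spherical $\boundary_+$ and is therefore a trivial ball or trivial product compressionbody. Since $S \cap T = \nil$, the no-valence-$0$-or-$2$ hypothesis on $T$ forces every $\boundary$-parallel tree in such a compressionbody to be empty. Two product compressionbodies would yield the reducible $M = S^2 \times S^1$, excluded by irreducibility of $(M,T)$; the remaining configurations give $(M,T) = (S^3, \nil)$ or $(B^3, \nil)$, completing the proof.
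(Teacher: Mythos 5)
Your handling of (1)--(3) and of the c-incompressibility of thin surfaces tracks the paper's argument: (1) and (3) are definitional, (2) follows because an sc-weak reduction disjoint from $\mc{H}^-$ would permit an extended thinning move strictly decreasing $\oc(\mc{H})$, and your inline sweepout argument for c-incompressibility is exactly the content of Corollary \ref{Vertical discs} (rule out triviality on each side of the thick surface, then apply Theorem \ref{Thm: Sweepout} to the c-disc). Up to that point the proposal is sound.

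The remaining parts contain genuine gaps. First, asserting that a $\boundary$-parallel thin surface ``would exhibit the thick surface in the resulting product region as admitting a $\boundary$-stabilization'' is a restatement of what must be proved, not a proof: because $T$ may be a graph, one needs the classification of bridge surfaces of trivial compressionbodies (\cite[Theorem 3.5]{TT2}, together with \cite{ST-class products}), whose conclusion is an alternative --- stabilized, boundary-stabilized, perturbed, removable, or parallel to $\boundary_+$ --- and each branch must be eliminated separately after first drilling out the edges of $T$ in the product region that miss the thick surface; this is where most of the work in the paper's proof of (4) lives. Second, in the ball-bounding case your ``innermost-thick-surface analysis'' asserts the thick surface inside $B$ is a sphere, but it is a Heegaard surface of a $3$-ball of arbitrary genus; you need Waldhausen \cite{Wald} to conclude that a positive-genus one is stabilized. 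Third, and most seriously, conclusion (5) rests on a false premise: a v.p.-compressionbody with spherical $\boundary_+$ need not be a trivial ball or trivial product --- it can be a ball with several open balls removed, with ghost arcs of $T$ joining the resulting $\boundary_-$ spheres (precisely the configuration highlighted in the paper's Section \ref{Ex: Spherical Untelescoping} example). The correct argument forms the graph whose vertices are the components of $\boundary_- C$ and whose edges are the ghost arcs, uses irreducibility of $T$ to show a leaf would be a once-punctured sphere and hence that no ghost arcs exist, and then uses part (4) to force the remaining $\boundary_-$ spheres into $\boundary M$ before invoking irreducibility of $M \setminus T$. (Also, gluing two trivial product compressionbodies along $\boundary_+$ yields $S^2 \times I$, not $S^2 \times S^1$.)
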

\begin{proof}
Without loss of generality, we may assume that $T$ has no vertices (drilling them out to turn them into components of $\boundary M$ if necessary). Conclusions (1) and (3) are immediate from the definition of locally thin. If some component of $\mc{H}^+$ is sc-weakly reducible in $(M,T)\setminus \mc{H}^-$, then, since $T$ is irreducible, we could perform an elementary thinning sequence, contradicting the definition of locally thin. Thus, (2) also holds.

Next we show that each component of $\mc{H}^-$ is c-incompressible. Suppose, therefore, that $S \subset \mc{H}^-$ is a thin surface. We first show that $S$ is c-incompressible and then that it is not $\boundary$-parallel.  Suppose that $S$ is c-compressible by a c-disc $D$. By an innermost disc argument, we may assume that no curve of $D \cap (\mc{H}^-\setminus S)$ is an essential curve in $\mc{H}^-$. By passing to an innermost disc, we may also assume that $D \cap (\mc{H}^-\setminus S) = \nil$. Let $(M_0, T_0)$ be the component of $(M,T) \setminus \mc{H}^-$ containing $D$. Let $H = \mc{H}^+ \cap M_0$ and recall that $H$ is connected. By Corollary \ref{Vertical discs} applied to $H$ in $(M_0, T_0)$, the v.p.-compressionbody between $S$ and $H$ is a trivial product compressionbody. This contradicts property (3) of locally thin multiple v.p.-bridge surfaces. Thus, each component of $\mc{H}^-$ is c-incompressible. 

We now show no sphere component of $\mc{H}^-$ bounds a 3-ball in $M \setminus T$. Suppose that $S \subset \mc{H}^-$ is such a sphere and let $B \subset M\setminus T$ be the 3-ball it bounds. By passing to an innermost such sphere, we may assume that no component of $\mc{H}^-$ in the interior of $B$ is a 2-sphere. If there is a component of $\mc{H}^-$ in the interior of $B$, that component would be compressible, a contradiction. Thus the intersection $H$ of $\mc{H}$ with the interior of $B$ is a component of $\mc{H}^+$. The surface $H$ is a Heegaard splitting of $B$. If $H$ is a sphere it is parallel to $S$, contradicting (3). If $H$ is not a sphere, then by \cite{Wald} it is stabilized, contradicting (1). Thus, each component of $\mc{H}^-$ is c-incompressible in $(M,T)$ and not a sphere bounding a 3-ball in $M\setminus T$. In particular, if $(M,T)$ is irreducible no component of $\mc{H}^-$ is a sphere disjoint from $T$. 

We now show that no component of $\mc{H}^-$ is $\boundary$-parallel. Since $T$ may be a graph and not simply a link, this does not follow immediately from our previous work. Suppose, to obtain a contradiction, that a component $F$ of $\mc{H}^-$ is boundary parallel in the exterior of $T$. An analysis (which we provide momentarily) of the proof of \cite[Theorem 9.3]{TT2} shows that $\mc{H}$ either has a perturbation or a generalized stabilization or is removable. We elaborate on this:

As in \cite[Lemma 3.3]{TT2}, since all components of $\mc{H}^-$ are c-incompressible, we may assume that the product region $W$ between $F$ and $\boundary (M\setminus T)$ has interior disjoint from $\mc{H}^-$. (That is, $F$ is innermost.) Observe that $W$ is a compressionbody with $F = \boundary_+ W$ and the component $H = \mc{H}^+ \cap W$ is a v.p.-bridge surface for $(W, T \cap W)$.

If there is a component of $T \cap W$ with both endpoints on $F$, then $F$ must be a 2-sphere and $W$ is a 3-ball with $T \cap W$ a $\boundary$-parallel arc. In this case, if $T \cap W$ is disjoint from $H$, then $H$ is a Heegaard surface for the solid torus obtained by drilling out the arc $T \cap W$. Since $H$ is not stabilized, it must then be a torus. In particular, since $W$ is a 3--ball, this implies that $H$ is meridionally stabilized, a contradiction. Thus, in particular, if $T \cap W$ has a component with both endpoints on $F$, then $H$ intersects each component of $T \cap W$. We now peform a trick to guarantee that this is also the case when $T \cap W$ does have a component with endpoints on $F$.

Let $(\ob{W}, \ob{T})$ be the result of removing from $(W, T \cap W)$ an open regular neighborhood of all edges of $T \cap W$ which are disjoint from $H$. (By our previous remark, all such edges have both endpoints on $\boundary W \setminus F$.) Then $\ob{T}$ is a 1--manifold properly embedded in $\ob{W}$ with no edge disjoint from $H$. 

If $\ob{T}$ has at least one edge, then by \cite[Theorem 3.5]{TT2} (which is a strengthening of \cite[Theorem 3.1]{TT1}) one of the following occurs:
\begin{enumerate}
\item[(i)] $H \in \vpH(\ob{W}, \ob{T})$ is stabilized, boundary-stabilized along $\boundary \ob{W} \setminus F$, perturbed or removable. 
\item[(ii)] $H$ is parallel to $F$ by an isotopy transverse to $T$.
\end{enumerate}

Consider possibility (i). If $H \in \vpH(\ob{W}, \ob{T})$ is stabilized, boundary-stabilized along $\boundary \ob{W}$, or removable then $H \in \vpH(W, T)$ would have a generalized stabilization or be removable with removing discs disjoint from the vertices of $T$, an impossibility. If $H \in \vpH(\ob{W}, \ob{T})$ is perturbed, $H \in \vpH(W, T)$ has a perturbation (since $\ob{T}$ is a 1-manifold), also an impossibility. Thus, (i) does not occur. Possibility (ii) does not occur since, none of the v.p.-compressionbodies of $(M,T) \setminus \mc{H}$ are trivial product compressionbodies adjacent to $\mc{H}^-$.

We may assume, therefore, that $\ob{T}$ has no edges. Then by \cite{ST-class products}, $H$ is either parallel to $F$ by an isotopy transverse to $T$ or is boundary-stabilized along $\boundary \ob{W}$. The former situation contradicts the assumption that $(M,T)\setminus \mc{H}$ contains no trivial product compressionbody adjacent to $F \subset \mc{H}^-$. In the latter situation, since $H$ is boundary-stabilized in $\ob{W}$, then it has a generalized stabilization as a surface in $\vpoH(W,T)$, contradicting the assumption that $\mc{H}$ is reduced. Thus, once again, $F$ is not boundary-parallel in $M \setminus T$. We have shown, therefore, that no component of $\mc{H}^-$ is $\boundary$-parallel and, thus, that each component of $\mc{H}^-$ is c-essential in $(M,T)$.

It remains to show that if $(M,T)$ is irreducible and if some component of $\mc{H}$ is a sphere disjoint from $T$, then $T = \nil$ and $M = B^3$ or $M = S^3$. Assume that $(M,T)$ is irreducible. We have already remarked that since each component of $\mc{H}^-$ is c-essential no component of $\mc{H}^-$ is a sphere disjoint from $T$. We now show that no component $H$ of $\mc{H}^+$ is a sphere disjoint from $T$, unless $T = \nil$ and $M$ is $S^3$ or $B^3$.  Suppose that there is such a component $H \subset \mc{H}^+$. Let $(C, T_C)$ and $(D, T_D)$ be the v.p.-compressiobodies on either side of $H$. By the definition of v.p.-compressionbody the surfaces $\boundary_- C$ and $\boundary_- D$ are the unions of spheres and $T_C$ and $T_D$ are the unions of ghost arcs. Consider the graphs $\Gamma_C = \boundary_- C \cup T_C$ and $\Gamma_D = \boundary_- D \cup T_D$ (thinking of the components of $\boundary_- C$ and $\boundary_- D$ as vertices of the graph.) By the definition of v.p.-compressionbody since $H$ is a sphere disjoint from $T$, the graphs $\Gamma_C$ and $\Gamma_D$ are the union of trees. If either $\Gamma_C$ or $\Gamma_D$ has an edge, then a leaf of $\Gamma_C$ or $\Gamma_D$ is a sphere intersecting $T$ exactly once. This contradicts the irreducibility of $(M,T)$. Consequently, both $T_C$ and $T_D$ are empty. Since no spherical component of $\mc{H}^-$ is disjoint from $T$, this implies that $\boundary_- C \cup \boundary_- D$ is a subset of $\boundary M$. Since $M\setminus T$ is irreducible, this implies that $\boundary_- C \cup \boundary_- D$ is either empty or a single sphere. Consequently, $M$ is either $S^3$ or $B^3$ and $T = \nil$. \end{proof}

\section{Decomposing spheres}\label{thin decomp spheres}

The goal of this section is to show that if we have a bridge surface for a composite knot or graph, we can untelescope it so that a summing sphere shows up as a thin level. 

We start with a simple observation (likely well-known) that compressing essential twice and thrice-punctured spheres results in a component which is still essential. The proof is straightforward and similar to that of Lemma \ref{spheres in v.p.-compressionbodies}, so we leave it to the reader. 

\begin{lemma}\label{lem:Always essential}
Assume that $(M,T)$ is a 3-manifold graph pair with $T$ irreducible. Suppose that $P \subset (M,T)$ is an essential sphere with $|P \cap T| \leq 3$. Let $P'$ be the result of compressing $P$ along an sc-disc $D$. Then at least one component of $P' \subset (M,T)$ is an essential sphere intersecting $T$ at most 3 times.
\end{lemma}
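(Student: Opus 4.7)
The plan is to establish the two parts of the conclusion --- the bound $|P_i \cap T| \leq 3$ on both components, and the essentiality of at least one component --- in sequence, using the irreducibility of $T$ to constrain the combinatorics at each stage.

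Because $P$ is a 2-sphere, any simple closed curve on $P$ is separating, so $\partial D$ cuts $P$ into two discs and compressing along $D$ yields exactly two sphere components $P' = P_1 \sqcup P_2$ with
\[
|P_1 \cap T| + |P_2 \cap T| = |P \cap T| + 2|D \cap T| \leq 5.
\]
Irreducibility of $T$ forbids either $P_i$ from meeting $T$ in exactly one point. A brief case-by-case check on $(|P \cap T|, |D \cap T|) \in \{0,2,3\}\times\{0,1\}$, discarding the distributions which would produce a once-punctured sphere, shows that every realized intersection pair $(|P_1 \cap T|, |P_2 \cap T|)$ lies among $(0,0)$, $(0,2)$, $(2,2)$, $(0,3)$, $(2,3)$ up to reordering. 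In every case both entries are at most $3$.

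For essentiality, the first observation is that $T$-irreducibility makes $P_i \setminus T$ automatically incompressible in $M \setminus T$: any compressing disc for $P_i\setminus T$ would combine with a once-punctured disc of $P_i$ to yield a sphere intersecting $T$ exactly once, which is forbidden. Hence each $P_i$ can fail to be essential only by being $\partial$-parallel or, when $P_i \cap T = \nil$, by additionally bounding a 3-ball in $M \setminus T$. Suppose, for a contradiction, that both $P_1$ and $P_2$ are inessential, and let $W_i$ denote the witnessing trivial submanifold (a trivial-tangle ball, or a product region cobounded with a boundary sphere of $M$). The surface $P$ is recovered from $P_1 \sqcup P_2$ by reattaching an annular neighborhood $A$ of $\partial D$ that traverses a slab $S \cong D \times I$ meeting $T$ in at most one unknotted arc. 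I would then case-analyze which side of each $P_i$ contains $W_i$. In the symmetric case where each $W_i$ lies on the side of $P_i$ opposite $S$, the region $W_1 \cup S \cup W_2$ is bounded by $P$ and is itself a 3-ball (three balls glued along two disc faces), whose intersection with $T$ is a trivial tangle assembled from the trivial tangles of $W_1$ and $W_2$ together with the at most one arc in $S$. This exhibits $P$ as inessential, a contradiction. Nesting subcases --- in which some $W_i$ contains $S$ and the opposite sphere --- are treated similarly: the inner inessential sphere cuts off a sub-ball of $W_i$, and one again identifies a trivial region bounded by $P$.

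The main obstacle is the case analysis in the essentiality step: one must verify in every geometric configuration (inward versus outward side of $P_i$, and ball versus product witness for each $W_i$) that the tangle in the region bounded by $P$ is trivially embedded. The hypothesis $|D \cap T| \leq 1$ is what makes this verification routine, because then $S$ contains no knotted tangle and cannot introduce any knotting when two trivial tangles are joined across it.
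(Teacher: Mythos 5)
The paper does not actually write out a proof of this lemma: it states that the proof is ``straightforward and similar to that of Lemma~\ref{spheres in v.p.-compressionbodies}'' and leaves it to the reader, so there is no line-by-line comparison to make. Your argument is correct and is essentially the intended one. The counting half is complete: since $\partial D$ separates the sphere $P$, compression yields exactly two spheres with $|P_1\cap T|+|P_2\cap T|=|P\cap T|+2|D\cap T|$, and irreducibility of $T$ (no once-punctured spheres) rules out exactly the distributions that would exceed three punctures; your list of realizable pairs is right, and it also correctly forces the incompressibility of each $P_i\setminus T$. The essentiality half is the contrapositive ``tubing two inessential spheres along a slab meeting $T$ in at most one unknotted arc yields an inessential sphere,'' which is precisely the argument the authors themselves deploy inside the proof of Lemma~\ref{Lem: Invariance} (``The sphere $P$ is thus obtained by tubing together inessential spheres. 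It follows that $P$ is also inessential''). One small caveat: your parenthetical classification of the witnessing region $W_i$ (``a trivial-tangle ball, or a product region cobounded with a boundary sphere of $M$'') slightly undersells the possibilities --- for the paper's notion of $\partial$-parallel, a thrice-punctured sphere can also be parallel to the boundary of a neighborhood of a trivalent vertex of $T$, and a parallelism region can run over a non-spherical component of $\partial M$ together with pieces of $\partial\overline{\eta(T)}$. This does not break anything, since in every case $W_i$ is a product over $P_i\setminus T$ rel the relevant part of $\partial(M\setminus T)$, and the concatenation of two such products across the slab $D\times I$ is again such a product; but if you write the case analysis out you should phrase it at that level of generality rather than via the two named models.
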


\begin{theorem}\label{thm:They are thin levels}
Suppose that $(M,T)$ is a (3-manifold, graph) pair with $T$ irreducible. Suppose that there is an essential sphere $P \subset (M,T)$ such that $|P \cap T| \leq 3$. If $\mc{H} \in \vpoH(M,T)$ is locally thin, then some component $F$ of $\mc{H}^-$ is an essential sphere with $|F \cap T| \leq 3$. Furthermore, $|F \cap T| \leq |P \cap T|$ and $F$ can be obtained from $P$ by a sequence of compressions using sc-discs.
\end{theorem}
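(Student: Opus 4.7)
Among all essential spheres $P' \subset (M,T)$ with $|P' \cap T| \leq 3$ that are obtainable from the given $P$ by a (possibly empty) sequence of sc-compressions and isotopies, choose one, still called $P$, with $|P \cap T|$ minimal, and subject to that, with $|P \cap \mc{H}^-|$ minimal. The strategy is to show $P \cap \mc{H}^- = \nil$, then apply Theorem \ref{Thm: Sweepout} to push $P$ off a thick surface into a single v.p.-compressionbody $C$, and finally apply Lemma \ref{spheres in v.p.-compressionbodies} to sc-compress $P$ inside $C$ until we obtain a sphere parallel to a component $F$ of $\boundary_- C$; that $F$ must lie in $\mc{H}^-$ and is the desired thin sphere.

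Showing $P \cap \mc{H}^- = \nil$ is the heart of the argument. Assume for contradiction that $P \cap \mc{H}^- \neq \nil$, and pick an innermost disc $D \subset P$ bounded by a curve $\alpha$ of $P \cap \mc{H}^-$; since $P$ has $\leq 3$ punctures, we may take $|D \cap T| \leq 1$. Let $Q \subset \mc{H}^-$ contain $\alpha$. Theorem \ref{Properties Locally Thin}(4) says $Q$ is c-essential, so $D$ cannot be a c-disc for $Q$; combined with irreducibility of $T$ (which forbids spheres meeting $T$ exactly once), this forces $\alpha$ to bound a disc $D_Q \subset Q$ with $|D_Q \cap T| = |D \cap T|$. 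Choose $D_Q$ innermost on $Q$ among subdiscs bounded by curves of $P \cap Q$, and let $\widetilde{D}_Q$ be obtained by pushing $D_Q$ slightly off $Q$ into the side of $Q$ containing $D$; then $\widetilde{D}_Q$ is a disc with $\partial \widetilde{D}_Q \subset P$, interior disjoint from $P$, and $|\widetilde{D}_Q \cap T| = |D \cap T|$. If $\widetilde{D}_Q$ is properly isotopic into $P \setminus T$, then $D \cup D_Q$ bounds a 3-ball $B \subset M\setminus T$; the interior of $B$ can contain no component of $\mc{H}^-$, as any such component would be compressible in $B$ (it would be a closed surface in a 3-ball disjoint from $T$), contradicting c-essentiality. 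So isotoping $P$ across $B$ strictly decreases $|P \cap \mc{H}^-|$, contradicting minimality. Otherwise $\widetilde{D}_Q$ is an sc-disc for $P$; compressing $P$ along it produces two spheres whose total number of intersection curves with $\mc{H}^-$ is $|P \cap \mc{H}^-| - 1$. Since $|\widetilde{D}_Q \cap T| = |D \cap T|$, both resulting spheres have puncture count $\leq |P \cap T|$ (using $|P \cap T| \geq 2$ whenever $|D \cap T| = 1$, which itself follows from $T$-irreducibility). By Lemma \ref{lem:Always essential} at least one component is essential with $\leq 3$ punctures, contradicting the minimality of $(|P \cap T|, |P \cap \mc{H}^-|)$ in the lexicographic order.

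With $P \cap \mc{H}^- = \nil$, $P$ lies in a single component $N$ of $M \setminus \mc{H}^-$, which is split by a single connected thick surface $H \subset \mc{H}^+$ into two v.p.-compressionbodies. By Theorem \ref{Properties Locally Thin}(2), $H$ is sc-strongly irreducible in $N$. Applying Theorem \ref{Thm: Sweepout} to $H$ and $P$ in $N$: Conclusion (3) is excluded by sc-strong irreducibility; Conclusion (2) is excluded because a sphere with $\leq 3$ punctures has no essential simple closed curves; hence Conclusion (1) holds, and after an isotopy $H \cap P = \nil$, so $P$ lies in a single v.p.-compressionbody $(C, T_C)$. Lemma \ref{spheres in v.p.-compressionbodies} now gives a sequence of sc-compressions of $P$ inside $C$ ending with a union of spheres, each bounding a trivial ball compressionbody or parallel to a component of $\boundary_- C$; the same lemma guarantees that puncture counts per component stay bounded by $|P \cap T|$, and iterated application of Lemma \ref{lem:Always essential} produces an essential component throughout. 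An essential component cannot bound a trivial ball compressionbody (those bound 3-balls disjoint from $T$ or containing an unknotted arc), so it is parallel to some component $F$ of $\boundary_- C$; since $P$ is not $\boundary$-parallel, $F \not\subset \boundary M$, whence $F \subset \mc{H}^-$. This $F$ has $|F \cap T| \leq |P \cap T|$ and, being isotopic to the essential sc-compression of $P$, is obtained from the original sphere by a sequence of sc-compressions.

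The main obstacle is the first bullet above: constructing the sc-disc $\widetilde{D}_Q$ from the innermost disc $D$ and delicately combining c-essentiality of the thin surfaces with the irreducibility of $T$ to force $|D_Q \cap T| = |D \cap T|$, so that the resulting sc-compression produces an essential sphere that is strictly smaller in the lexicographic order $(|P \cap T|, |P \cap \mc{H}^-|)$.
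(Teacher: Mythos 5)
Your proposal is correct and follows essentially the same route as the paper: minimize $|P\cap\mc{H}^-|$ over spheres obtained by isotopy and sc-compression, use an innermost-disc argument together with c-incompressibility of the thin surfaces and irreducibility of $T$ to empty $P\cap\mc{H}^-$, then apply Theorem \ref{Thm: Sweepout} and Lemma \ref{spheres in v.p.-compressionbodies} to make $P$ parallel to a thin sphere. The only quibble is a bookkeeping slip when you re-choose $D_Q$ to be innermost on $Q$ (the re-chosen disc need not satisfy $|\widetilde{D}_Q\cap T|=|D\cap T|$ for the original $D$), but the argument only needs that the innermost disc in $Q$ is zero- or once-punctured and that sc-compression does not increase puncture counts, which your appeal to $T$-irreducibility and Lemma \ref{lem:Always essential} supplies.
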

\begin{proof}
Let $P \subset (M,T)$ be an essential sphere such that $|P \cap T| \leq 3$. As in the proof of Lemmas \ref{spheres in v.p.-compressionbodies} and \ref{lem:Always essential}, since $T$ is irreducible, if $P_0$ is a sphere resulting from an sc-compression of $P$, then $|P_0 \cap T| \leq |P \cap T|$ since $T$ is irreducible. 

Without loss of generality, we may assume that the given $P$ was chosen so that no sequence of isotopies and sc-compressions reduces $|P \cap \mc{H}^-|$. The intersection $P \cap \mc{H}^-$ consists of a (possibly empty) collection of circles. We show it is, in fact, empty.  Suppose, for a contradiction, that  $\gamma$ is a component of $|P \cap \mc{H}^-|$. Without loss of generality, we may suppose it is innermost on $P$. Let $D \subset P$ be the unpunctured disc or once-punctured disc which it bounds. Since $\mc{H}^-$ is c-incompressible, $\gamma$ must bound a zero or once-punctured disc $E$ in $\mc{H}^-$. Thus, if $|P \cap \mc{H}^-| \neq 0$, then there is a component of the intersection which is inessential in $\mc{H}^-$.

Let $\zeta \subset P \cap \mc{H}^-$ be a component which is inessential in $\mc{H}^-$ and which, out of all such curves, is innermost in $\mc{H}^-$. Let $E \subset \mc{H}^-$ be the unpunctured or once-punctured disc it bounds. Observe that $\zeta$ also bounds a  zero or once-punctured disc on $P$. If $E$ is not an sc-disc for $P$, then we can isotope $P$ to reduce $|P \cap \mc{H}^-|$, contradicting our choice of $P$. Thus, $E$ is an sc-disc. By Lemma \ref{lem:Always essential}, compressing $P$ along $E$ creates two spheres, at least one of which intersects $T$ no more than 3 times and is essential in the exterior of $T$. Since this component intersects $\mc{H}^-$ fewer times than does $P$, we have contradicted our choice of $P$. Hence $P \cap \mc{H}^- = \nil$.

We now consider intersections between $P$ and $\mc{H}^+$. Since $P$ is disjoint from $\mc{H}^-$, we may apply Theorem \ref{Thm: Sweepout} to the component $(W, T_W)$ of $(M,T)\setminus \mc{H}^-$ containing $P$. We apply the theorem with $H = \mc{H}^+ \cap W$ and $F = P$. If some component of $\mc{H}^-$ is a once-punctured sphere, we are done, so assume that no component of $\mc{H}^-$ is a once-punctured sphere. By cutting open along $\mc{H}^-$ and replacing $(M,T)$ with the component containing $P$, we may assume that $\mc{H}^- = \nil$ and that $H = \mc{H}$ is connected. Apply Theorem \ref{Thm: Sweepout} to $P$ (in place of $F$) to see that we can isotope $H$ transversally to $T$ in $M \setminus \mc{H}^-$ so that one of the following occurs:
\begin{enumerate}
\item $H \cap P = \nil$.
\item $H \cap P$ is a non-empty collection of curves, each of which is essential in $P$.
\item $H$ is sc-weakly reducible.
\end{enumerate}
Since $\mc{H}$ is locally thin in $\vpoH(M,T)$, (3) does not occur. Since $P$ contains no essential curves, (2) does not occur. Thus, $H \cap P = \nil$. 

Let $(C, T_C)$ be the component of $M \setminus \mc{H}$ containing $P$. By Lemma \ref{spheres in v.p.-compressionbodies}, after some sc-compressions, $P$ is parallel is a component of $\mc{H}^- \cup \boundary M$. By Lemma \ref{lem:Always essential}, $P$ is parallel to a component of $\mc{H}^-$ and we are done.
 \end{proof}

\begin{bibdiv}
\begin{biblist}
\bibselect{AdditiveInvariantsBib2}
\end{biblist}
\end{bibdiv}
\end{document}